\documentclass[11pt,leqno]{amsart}

\usepackage[all,cmtip]{xy}
\usepackage{a4,latexsym}
\usepackage[utf8]{inputenc}
\usepackage{textcomp,fontenc,exscale,ifthen}
\usepackage[hidelinks]{hyperref}
\usepackage{amsxtra,amsmath,amsfonts,amscd, amssymb, mathrsfs, amsthm}
\usepackage{color, mathtools}
\usepackage{enumitem}

\newtheorem{thm}{Theorem}[section]
\newtheorem{prop}[thm]{Proposition}
\newtheorem{cor}[thm]{Corollary}
\newtheorem{lem}[thm]{Lemma}
\newtheorem{lemma}[thm]{Lemma}

\theoremstyle{definition}
\newtheorem{definition}[thm]{Definition}

\newtheorem{rem}[thm]{Remark}
\newtheorem{art}[thm]{}

\addtolength{\oddsidemargin}{-1cm}
\addtolength{\evensidemargin}{-1cm}
\addtolength{\textwidth}{2cm}
\addtolength{\topmargin}{-2cm}
\addtolength{\textheight}{2cm}

\numberwithin{paragraph}{section}
\numberwithin{equation}{thm}

\setlist[enumerate]{label=\it{(\roman*)},
ref=\it{(\roman*)}}

\usepackage{color}
\usepackage{comment}

\newcommand{\walter}[1]{{\textcolor{black}{#1}}}

\def\N{{\mathbb N}}
\def\Z{{\mathbb Z}}
\def\Q{{\mathbb Q}}
\def\R{{\mathbb R}}

\def\A{{\mathbb A}}

\def\L{{\mathbb L}}
\def\N{{\mathbb N}}
\def\T{{\mathbb T}}

\newcommand{\metr}{{\|\hspace{1ex}\|}}
\newcommand{\Hom}{{\rm Hom}}

\def\an{{\rm an}}

\DeclareMathOperator{\trop}{trop}
\DeclareMathOperator{\tropbar}{\overline{trop}}
\DeclareMathOperator{\MA}{MA}

\newcommand{\Xan}{{X^{\rm an}}}

\newcommand{\Tan}{{T^{\rm an}}}

\newcommand{\Acal}{{\mathscr A}}
\newcommand{\Bcal}{{\mathscr B}}
\newcommand{\Ccal}{{\mathscr C}}
\newcommand{\Dcal}{{\mathscr D}}
\newcommand{\Ecal}{{\mathscr E}}

\newcommand{\Hcal}{{\mathscr H}}

\newcommand{\Ncal}{{\mathscr N}}
\newcommand{\Ocal}{{\mathscr O}}

\newcommand{\codim}{{\rm codim}}

\newcommand{\Spec}{{\rm Spec}}
\newcommand{\Spf}{{\rm Spf}}

\DeclareMathOperator{\supp}{supp}

\newcommand{\relint}{{\rm relint}}
\newcommand{\kcirc}{{ K^\circ}}
\newcommand{\ktilde}{{ \tilde{K}}}
\newcommand{\Ufrak}{{\mathfrak U}}
\newcommand{\Vfrak}{{\mathfrak V}}
\newcommand{\Xfrak}{{\mathfrak X}}
\newcommand{\Afrak}{{\mathfrak A}}
\newcommand{\Bfrak}{{\mathfrak B}}
\newcommand{\Efrak}{{\mathfrak E}}
\newcommand{\Hfrak}{{\mathfrak H}}
\newcommand{\Lfrak}{{\mathfrak L}}
\newcommand{\Lan}{{L^{\rm an}}}

\newcommand{\qgamma}{{(\Q,\Gamma)}}

\DeclareMathOperator{\red}{{red}}
\DeclareMathOperator{\PL}{{PL}}
\DeclareMathOperator{\Sk}{{Sk}}

\title[Monge--Amp\`ere measures for toric metrics on abelian varieties]{Monge--Amp\`ere measures for toric metrics on abelian varieties}

\author[W.~Gubler]{Walter Gubler}
\address{W. Gubler, Mathematik, Universit{\"a}t 
Regensburg, 93040 Regensburg, Germany}
\email{walter.gubler@mathematik.uni-regensburg.de}

 \author[S.~Stadl\"oder]{Stefan Stadl\"oder}
 \address{S. Stadl\"oder,  Mathematik, Universit{\"a}t 
Regensburg, 93040 Regensburg, Germany}
\email{stefan.stadloeder@mathematik.uni-regensburg.de}

 \thanks{W.~Gubler 
was supported by the collaborative research 
center SFB 1085 \emph{Higher Invariants - Interactions between Arithmetic Geometry and Global Analysis} funded by the Deutsche Forschungsgemeinschaft. S.~Stadl\"oder was supported by the Hanns-Seidel-Stiftung and the  Studienstiftung des deutschen Volkes.}


\begin{document}

\begin{abstract}
Toric metrics on a line bundle of an abelian variety $A$ are the invariant metrics under the natural torus action coming from Raynaud's uniformization theory. We compute here the associated Monge--Amp\`ere measures for the restriction to any closed subvariety of $A$. This generalizes the computation of canonical measures done by the first author from canonical metrics to toric metrics and from discrete valuations to arbitrary non-archimedean fields.
\end{abstract}

\keywords{Berkovich analytic spaces, formal geometry, abelian varieties, canonical measures} 
\subjclass{{Primary 14G40; Secondary 11G10, 14G22}}

\maketitle

\setcounter{tocdepth}{1}

\tableofcontents

\section{Introduction}  \label{section: introduction}

Abelian varieties are projective geometrically integral group varieties over a field. They play a distinguished role in arithmetic geometry. Let $X$ be a closed subvariety of an abelian variety $A$ over a number field $K$. The group structure of $A$ makes it easier to understand the structure of the $K$-rational points of $X$. For example, Faltings \cite{faltings91} showed the Bombieri--Lang conjecture for such $X$. No finiteness statements are sensible for $\overline K$-rational points of $X$, instead we are looking for density statements for special points. The Manin--Mumford conjecture, proven by Raynaud \cite{raynaud83}, states that the set of torsion points of $X$ is dense if and only if $X$ is the translate of an abelian subvariety by a torsion point. 

The height of a $\overline K$-rational point of a projective variety measures the arithmetic complexity of its coordinates. In the case of an abelian variety $A$, there are canonical heights called N\'eron--Tate heights. A natural generalization of the Manin--Mumford conjecture is the Bogomolov conjecture which claims that if the closed subvariety $X$ of $A$ has dense small points, then $X$ is again a torsion translate of an abelian subvariety. This was shown by Ullmo \cite{ullmo98} for a curve inside its Jacobian and by Zhang \cite{zhang98} in full generality. All the above statements have analogues in the case of a function field $K$ where one has to take into account that  constant abelian varieties are also a source for points of height $0$. The function field variant of the Bogomolov conjecture is called  the geometric Bogomolov conjecture which  was harder to prove than the number field case. It was shown by Gao and Habegger \cite{gao_habegger19} in the case of the function field of a curve and generalized by Cantat--Gao--Habegger--Xie \cite{cantat_etal21} to arbitrary function fields, but both assuming that $K$ has characteristic $0$. In arbitrary characteristics, the geometric Bogomolov conjecture was shown  by Xie and Yuan \cite{xie_yuan} 
using reduction steps by Yamaki 
and the Manin--Mumford conjecture over function fields by Hrushowski \cite{hrushovski01} and Pink--Roessler \cite{pink_roessler04}.

Ullmo's and Zhang's argument relies on an equidistribution theorem for small points due originally to Szpiro--Ullmo--Zhang \cite{szpiro_ullmo_zhang}, later generalized by Yuan \cite{yuan-2008}. This equidistribution strategy also works to some extent  in the case of function fields as shown for totally degenerate abelian varieties in \cite{gubler-2007b}. In constrast to the number field case, the equidistribution has then to be with respect to a non-archimedean place and takes place on the associated Berkovich space. The argument relies on a precise description of canonical measures of $X$, see below for more details.  This description holds for all abelian varieties $A$ \cite{gubler-compositio} and was the key in Yamaki's argument showing that it is enough to prove the geometric Bogomolov conjecture for abelian varieties $A$ with good reduction at all places of $K$. In the present paper, we will generalize the description of canonical measures of $X$.

For the remainder of the introduction, we consider an algebraically closed field $K$ endowed with a complete non-archimedean absolute value and non-trivial value group $\Gamma$ in $\R$. For a projective variety $X$, we will perform analytic considerations on the associated Berkovich space $X^\an$. The notion of continuous semipositive metrics of a line bundle $L$ over $X$ goes back to Zhang and is recalled in \S \ref{subsection fomospm}. For such a metric $\metr$, Chambert--Loir \cite{chambert-loir-2006} has introduced  non-archimedean Monge--Amp\`ere measures $c_1(L,\metr)^{\wedge \dim(X)}$ which are positive Radon measures on $\Xan$, see \cite{chambert-loir-2006}, \cite{gubler-2007b} and \S \ref{subsection nonarMA}.

Assume now that $X$ is a closed subvariety of an abelian variety $A$ over $K$ and let $d \coloneqq \dim(X)$. For a rigidified ample line bundle $L$ of $A$, there is a canonical metric $\metr_L$ of $L$. Since $\metr_L$ is a continuous semipositive metric, we get the \emph{canonical measure}
$$\mu_L \coloneqq c_1(L|_X,\metr_L)^{\wedge d}$$
on the Berkovich analytification $\Xan$ of $X$. If $X,A$ and $L$ are defined over a discretely valued field, then it was shown in \cite{gubler-compositio} that the support of $\mu_L$ has a piecewise linear structure with a polytopal decomposition $\Dcal$ such that 
$$\mu_L = \sum_{\sigma \in \Dcal}  r_\sigma \mu_\sigma$$
where $r_\sigma \in \R_{\geq 0}$ and $\mu_\sigma$ is a Lebesgue measure on the polytope $\sigma$. Note that lower dimensional polytopes are also allowed. The goal of this paper is to generalize these results, removing the discreteness assumption about the field of definition  and replacing canonical metrics by a more general class called toric metrics. As we will see, toric metrics on $L$ are the variations of canonical metrics by combinatorial means.

We continue with the above setup confirming that $K$ is any algebraically closed non-archimedean field with non-trivial absolute value. The Raynaud extension for the abelian variety $A$ is a canonical exact sequence
	$$0 \longrightarrow \Tan \longrightarrow E^\an \stackrel{q}{\longrightarrow} B^\an \longrightarrow 0$$
of abelian analytic groups over $K$ which are all algebraic with $T$ a  torus of rank $n$ and $B$ an abelian variety of good reduction. 
Raynaud's uniformization theory gives a canonical description $A^\an = E^\an/\Lambda$ where $E$ is a group scheme of finite type over $K$ and $\Lambda$ is a discrete subgroup of $E^\an$ contained in $E(K)$. Note that the quotient map $p\colon E^\an \to A^\an$ is in general not algebraic. Moreover, there is a \emph{canonical tropicalization}
$\trop \colon E^\an \rightarrow N_\R$
mapping $\Lambda$ homeomorphically onto a lattice of $N_\R \cong \R^n$ where  $N$ is the cocharacter lattice of $T$. It induces a canonical tropicalization
$$\tropbar\colon A^\an \longrightarrow N_\R/\trop(\Lambda).$$
We say that a continuous metric $\metr$ of the rigidified line bundle $L$ of $A$  is \emph{toric} if there is a function $\phi\colon N_\R \to \R$ such that  $p^*\metr = e^{-\phi\circ \trop}p^*\metr_L$.

There is a rigidified line bundle $H$ on $B$ such that we have an identification $p^*(L^\an)=q^*(H^\an)$ as $\Lambda$-linearized cubical line bundles on $E^\an$. The  metric $q^*(\metr_H)$  does not descend to $L^\an$ and the obstruction leads to 
 a cocycle $(z_\lambda)_{\lambda \in \trop(\Lambda)}$ encoding all tropical information about the line bundle $L$, see Section \ref{section tormet} for details.

\begin{thm} \label{theo-intro-toric-metrics}
There is a bijective correspondence between continuous toric metrics $\metr$ on $L^\an$ and continuous functions $f\colon N_\R \to \R$ satisfying the cocycle rule
$$f(\omega + \lambda) = f(\omega)+ z_\lambda(\omega) \quad (\omega \in N_\R\, , \lambda \in \trop(\Lambda)).$$
The correspondence is determined by 
$$f \circ \trop = -\log(p^*\metr/q^*\metr_H).$$
If $L$ is ample, then the function $f$ is convex if and only if the metric $\metr$ is semipositive. 
\end{thm}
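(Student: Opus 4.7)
The plan is to first set up the bijection by pulling metrics back to the Raynaud cover $E^{\an}$, deduce the cocycle rule from $\Lambda$-invariance, and then treat the two semipositivity implications by approximation with $\Gamma$-rational piecewise affine metrics. Given a continuous toric metric $\metr=e^{-\phi\circ\trop}\metr_L$, the canonical metric $\metr_L$ itself satisfies a relation of the form $p^*\metr_L=e^{-g\circ\trop}q^*\metr_H$ for an explicit continuous function $g\colon N_\R\to\R$ arising from the construction of $\metr_L$ via the Raynaud extension (Section \ref{section tormet}). Setting $f\coloneqq\phi+g$ then gives the identity $f\circ\trop=-\log(p^*\metr/q^*\metr_H)$, and conversely every such $f$ determines $\phi=f-g$ and hence $\metr$ uniquely. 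Continuity transfers between $\metr$ and $f$ because $\trop\colon E^{\an}\to N_\R$ is a topological quotient onto its image.

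The cocycle rule is then forced by the $\Lambda$-equivariance implicit in the identification $p^*L^{\an}=q^*H^{\an}$. On the one hand $p^*\metr$ is automatically $\Lambda$-invariant because it descends from $\metr$; on the other hand, by the very definition of $z_\lambda$, the pullback of $q^*\metr_H$ under translation by any lift of $\lambda\in\trop(\Lambda)$ differs from $q^*\metr_H$ by a multiplier determined by $z_\lambda\circ\trop$. Comparing the two expressions for $p^*\metr$ on $E^{\an}$ yields exactly $f(\omega+\lambda)=f(\omega)+z_\lambda(\omega)$; conversely, this identity makes $e^{-f\circ\trop}q^*\metr_H$ $\Lambda$-invariant and hence descend to a continuous toric metric on $L^{\an}$.

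For the implication $f$ convex $\Rightarrow$ $\metr$ semipositive, I would approximate $f$ uniformly by convex $\Gamma$-rational piecewise affine functions $f_k$ still satisfying the cocycle rule. Since the $z_\lambda$ are themselves piecewise affine, this can be arranged by regularizing $f$ on a fundamental domain for the lattice $\trop(\Lambda)\subset N_\R$ and extending via the cocycle. The induced toric metrics $\metr_k$ are then formal model metrics on $L$ computable on a suitable formal model built from the Raynaud extension, and on this model the convexity of $f_k$ translates, via the standard toric dictionary, into the nefness of the associated toric contribution, hence into semipositivity of $\metr_k$. Since uniform limits of continuous semipositive metrics remain semipositive, the conclusion passes to $\metr$.

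For the converse under the ampleness hypothesis, the key input is that every continuous semipositive metric on an ample line bundle is a uniform limit of semipositive model metrics (Boucksom--Favre--Jonsson, now available over arbitrary non-trivially valued algebraically closed $K$). Via this approximation one reduces to the piecewise affine situation, where any failure of convexity of $f$ along a chord in $N_\R$ can be lifted to an analytic annulus in $E^{\an}$ through a one-parameter subgroup of $T$; the maximum principle for continuous semipositive metrics then contradicts the existence of such a chord, forcing $f$ to be convex. This last step is the main obstacle of the proof: in the arbitrary value-group setting the vertical-component arguments used in the discretely valued case of \cite{gubler-compositio} must be replaced by this approximation theorem together with a careful analysis of semipositivity along the skeleta arising from tropicalization.
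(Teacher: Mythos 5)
Your handling of the bijective correspondence is essentially what the paper does: pull back to $E^\an$ via $p$, compare with $q^*\metr_H$, and read off the cocycle rule from $\Lambda$-invariance; this is the content of Remarks~\ref{bijective correspondence for toric metrics} and~\ref{descend of toric metrics} leading to Proposition~\ref{bijective correspondence for toric metrics on A}. The direction \emph{convex $\Rightarrow$ semipositive} via approximation by convex piecewise $(\Q,\Gamma)$-affine functions obeying the cocycle rule is also the right mechanism (the paper just cites \cite[Proposition 8.3.1]{burgos-gubler-jell-kuennemann1} for it, but that proposition is proved along the lines you describe).

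The converse direction (\emph{semipositive $\Rightarrow$ convex} for $L$ ample) is where you diverge from the paper, and where your sketch has genuine gaps. First, reducing to the piecewise affine case by approximating $\metr$ uniformly by semipositive model metrics $\metr_k$ does not immediately help: those approximants need not be toric, so there is no function $f_k$ on $N_\R$ to analyze, and simply invoking the definition of semipositive (uniform limit of nef model metrics — the paper's definition already; the Boucksom--Favre--Jonsson citation does not add anything here) does not provide toric approximants. One would have to average over $A_0$ to produce toric ones, an extra step you do not address. Second, the ``one-parameter subgroup / maximum principle'' step is much harder than you suggest: the annuli you want to restrict to sit in $E^\an$, not in $A^\an$, and the morphism $p\colon E^\an\to A^\an$ is analytic but not algebraic. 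Semipositivity in the paper's sense is an algebraic/model-theoretic notion on $A^\an$, and it is not clear why it should yield the subharmonicity you need after pulling back along $p$ and restricting to an analytic annulus in $E^\an$; this is precisely the sort of issue that forces the paper to go through the theory of weakly smooth forms and currents. The paper's actual argument is quite different: it uses that the first Chern current $c_1(L,\metr)$ is a positive current on $A^\an$ by \cite[Theorem 1.3]{gubler_rabinoff_jell:harmonic_trop}, then descends this positivity to the statement that $d'd''[f]$ is a positive current on $N_\R$ using that the canonical tropicalization is a \emph{harmonic} (not smooth) tropicalization map — the technical extension to harmonic tropicalizations being exactly what the appendix (Proposition~\ref{positive smooth forms and tropicalization} and Theorem~\ref{strong first Chern current is positive}) provides — and finally concludes convexity of $f$ from Lagerberg's equivalence \cite[Proposition 2.5]{lagerberg-2012}. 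You honestly flag the converse step as the main obstacle, but the vague appeal to ``careful analysis of semipositivity along the skeleta'' does not close the gap; the missing ingredient is precisely the current-theoretic positivity and its compatibility with the canonical tropicalization.
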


This will be shown in Proposition \ref{bijective correspondence for toric metrics on A} and Theorem \ref{semipositive toric metrics}. To deduce semipositivity from convexity, we will use an approximation result by piecewise linear convex functions satisfying the cocycle rule which was done in \cite{burgos-gubler-jell-kuennemann2}. The converse uses arguments from the theory of weakly smooth forms on Berkovich analytic spaces given in \cite{gubler_rabinoff_jell:harmonic_trop} and recalled in Appendix \ref{section app diffforms}.

Recall that $X$ is a closed $d$-dimensional subvariety of the abelian variety $A$.
In Section \ref{section cansubset}, we show that for any ample line bundle $L$ on $A$, the support $S_X$ of the canonical measure $ c_1(L|_X,\metr_L)^{\wedge d}$ has a canonical piecewise $(\Q,\Gamma)$-linear structure  not depending on the choice of $L$. In fact, we will show that it is a $(\Q,\Gamma)$-skeleton in the sense of Ducros \cite{ducros12:squelettes_modeles}.

\begin{thm} \label{intro canonical subset and tropicalization}
	The canonical tropicalization map $\tropbar\colon A^\an \to N_\R/\trop(\Lambda)$ restricts to a piecewise $(\Q,\Gamma)$-linear map $S_X \to \tropbar(\Xan)$ which is surjective and finite-to-one.  
\end{thm}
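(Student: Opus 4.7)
Proof plan. The strategy is to lift the problem to the Raynaud extension and then exploit the explicit Monge--Amp\`ere formula for ample line bundles with canonical metric, which Theorem~A identifies as toric. Since the canonical projection $p\colon E^\an \to A^\an$ has $\Lambda$ as deck group and intertwines $\trop$ with $\tropbar$, it is enough to show that the restriction of $\trop$ to $p^{-1}(S_X)$ is a surjective, finite-to-one, piecewise $(\Q,\Gamma)$-linear map onto $\trop(p^{-1}(\Xan))$; the statement for $\tropbar$ on $A^\an$ then descends by passing to the $\Lambda$-quotient on both sides. The image $\trop(p^{-1}(\Xan)) \subset N_\R$ is a $\trop(\Lambda)$-periodic, piecewise $(\Q,\Gamma)$-linear subset of pure dimension $d$, obtainable via a Bieri--Groves style argument in a toric chart refining the Raynaud extension.

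The second step invokes the main Monge--Amp\`ere computation of the paper. Combining Theorem~A with the toric-metric computation produces a decomposition
$$
c_1(L|_X, \metr_L)^{\wedge d} \;=\; \sum_\tau r_\tau\, \mu_\tau,
$$
where $\tau$ ranges (modulo $\Lambda$) over the maximal polytopes of a $(\Q,\Gamma)$-polytopal decomposition of the $(\Q,\Gamma)$-skeleton $S_X$, $r_\tau > 0$ is an intersection-theoretic multiplicity, and $\mu_\tau$ is the intrinsic Lebesgue measure on $\tau$. Geometrically, each $\tau$ is the image of a canonical affine section of the retraction of $A^\an$ onto its Raynaud skeleton, one section per stratum of a suitable formal model of $X$ built from the good reduction of $B$ together with the lattice $\Lambda$.

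The three conclusions now follow from this description. Piecewise $(\Q,\Gamma)$-linearity of $\tropbar|_{S_X}$ holds because on each polytope $\tau$ the map coincides with the natural affine identification $\tau \to \tropbar(\tau) \subset \tropbar(\Xan)$ provided by the skeleton-section above, which is $(\Q,\Gamma)$-affine by construction. Surjectivity follows from the fact that the canonical measure gives strictly positive mass to each top-dimensional polytope of $\tropbar(\Xan)$, so every top-dimensional cell is covered by $\tropbar(S_X)$; lower-dimensional faces are then handled by taking closures. Finite-to-oneness reduces to the finiteness (modulo $\Lambda$) of the collection $\{\tau\}$, which in turn reflects the finiteness of the strata of the formal model of $X$ meeting any given tropical cell.

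The principal obstacle is the Monge--Amp\`ere formula used in the second step: extending the canonical-measure description of \cite{gubler-compositio} from the discretely valued base to arbitrary non-archimedean $K$ is precisely the technical heart of the present paper, carried out via the toric-metric formalism and the convexity/approximation input from Theorem~A. Once that formula is in hand, the theorem at issue is essentially its combinatorial shadow, and the steps above assemble it without further non-trivial input.
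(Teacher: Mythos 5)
Your overall architecture is reasonable, but there is a genuine gap in the surjectivity step. You assert that ``the canonical measure gives strictly positive mass to each top-dimensional polytope of $\tropbar(\Xan)$.'' The canonical measure lives on $\Xan$ and is supported on $S_X$; what you would need is that the pushforward $\tropbar_*(c_1(L|_X,\metr_L)^{\wedge d})$ charges every top cell of $\tropbar(\Xan)$. But that pushforward is supported on $\tropbar(S_X)$ by construction, so the asserted positivity is logically equivalent to (in fact stronger than) the surjectivity you are trying to prove; it is nowhere established in the paper, and nothing in Theorem~\ref{theo-intro-toric-metrics} or the Monge--Amp\`ere formula produces it for free. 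Moreover, the ``take closures'' step for lower-dimensional cells does not work: the tropical variety $\tropbar(\Xan)$ is not pure-dimensional in general. In the paper's proof of Theorem~\ref{tropical description of canonical measures}, the local dimension of $\tropbar(\Xan)$ at a point $\overline\omega$ is $d-e$ where $e=\dim S$ can be positive, so there are cells that are not contained in the closure of any $d$-dimensional cell. Any argument that only covers the top-dimensional locus and then passes to closures will miss precisely those points.

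The missing ingredient is what the paper actually does: fix $\overline\omega\in\tropbar(\Xan)$, pass to a $\Lambda$-periodic $(\Z,\Gamma)$-polytopal decomposition $\Ccal_1$ tailored so that the cell $\overline\Delta$ containing $\overline\omega$ meets $\tropbar(\Xan)$ only in $\overline\omega$ and has $\codim(\Delta)=d-e$ equal to the local dimension; build the induced refinement $\Xfrak''$ of $\Xfrak'$, locate an irreducible component $Y'$ of $\Xfrak_s''$ with associated vertex $\xi'$ of $\Dcal$ satisfying $\overline f_{\rm aff}(\xi')=\overline\omega$; and finally prove non-degeneracy of the canonical face $\Delta_S\ni\xi'$ by a dimension count that compares the fiber-bundle structure of the dense stratum of $Y'$ over $S$ with the fiber-bundle structure of the stratum $W_{\overline\Delta}$ of the Mumford model $\Afrak_1$ over $\Bfrak_s$. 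This last count — showing $\dim(G(S))=\dim(S)$ — is the crux, and it is entirely absent from your plan. Your reduction to the uniformization $E^\an$ is also somewhat loose: $p^{-1}(\Xan)$ is a non-algebraic analytic space, so Bieri--Groves in the form you invoke does not apply directly; the paper works on $\Xan$ itself using the skeleton of the strictly polystable model and the identity $\tropbar\circ f=\overline f_{\rm aff}\circ\tau$, which also cleanly yields the piecewise-$\qgamma$-linearity and finite-to-oneness you want (both follow from $S_X=f(S(\Xfrak')_{\rm nd})$, the finiteness of the non-degenerate canonical faces modulo $\Lambda$, and injectivity of $\overline f_{\rm aff}$ on each of them).
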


This result was shown  in \cite{gubler-2007b} in the special case of $X,L,A$ being defined over a discretely valued field and was crucial in Yamaki's reduction step mentioned above. We prove at the end of the paper that this holds for any algebraically closed non-archimedean field $K$.

The main result of this paper describes the non-archimedean Monge--Amp\`ere measure of a continuous toric metric in terms of the classical real Monge--Amp\`ere measure  $\MA(f)$ associated to a convex function on $\R^n$, see \S \ref{subsection real MA}. 


\begin{thm} \label{intro main result}
	There is a polytopal $\qgamma$-decomposition $\Sigma$ of the canonical subset $S_X$ such that for any  ample line bundle $L$ on $A$ with continuous toric metric $\metr$ corresponding to the convex function $f$ as in Theorem \ref{theo-intro-toric-metrics}, there is a multiplicity $m_\sigma \in \Q_{\geq 0}$ associated to  $\sigma \in \Sigma$ such that 
	$$c_1(L|_X,\metr)^{\wedge d}(\Omega)=  m_\sigma \cdot \MA(f)(\tropbar(\Omega))$$
	for any Lebesgue measurable subset $\Omega$ of $\relint(\sigma)$.
	The multiplicity $m_\sigma$ depends only on $X$, $L$ and $\sigma$, but not on the toric metric $\metr$.
\end{thm}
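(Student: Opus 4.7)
The plan is to lift everything to the Raynaud universal cover $p\colon E^{\an}\to A^{\an}$, to decompose the toric metric there, and to isolate the purely tropical contribution to the Monge--Amp\`ere measure. Set $Y:=p^{-1}(X^{\an})$ and $\widetilde L:=p^*L^{\an}=q^*H^{\an}$. By Theorem~\ref{theo-intro-toric-metrics} the lifted metric factorises as $p^*\metr = e^{-f\circ\trop}\,q^*\metr_H$, so the corresponding first Chern form on $E^{\an}$ decomposes as
\begin{equation*}
c_1(\widetilde{L},p^*\metr) \;=\; dd^c(f\circ\trop)\;+\;q^*c_1(H,\metr_H).
\end{equation*}
Since $p$ is a local isomorphism and the Chambert--Loir Monge--Amp\`ere measure is local, it suffices to compute $c_1(\widetilde L,p^*\metr)^{\wedge d}\wedge[Y]$ on a $\Lambda$-fundamental domain in $Y$ and descend.

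Next I would expand the $d$-fold self-intersection binomially,
\begin{equation*}
c_1(\widetilde L,p^*\metr)^{\wedge d} = \sum_{k=0}^{d}\binom{d}{k}\bigl(dd^c(f\circ\trop)\bigr)^{\wedge k}\wedge q^*c_1(H,\metr_H)^{\wedge(d-k)},
\end{equation*}
and analyse the supports of the mixed terms. The crucial input is that $B$ has good reduction, so $\metr_H$ is the formal metric induced by the smooth abelian formal model of $B$; hence for every closed subvariety $Z\subseteq B$, the measure $c_1(H,\metr_H)^{\wedge\dim Z}|_Z$ is $\deg_H(Z)\cdot\delta_{x_B}$, concentrated at the unique Shilov point $x_B\in B^{\an}$. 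Since $q\colon E^{\an}\to B^{\an}$ is the Raynaud torus bundle and $\trop$ factors through its torus factor, the lift $\widetilde S_X\subseteq Y$ of the canonical subset $S_X$ lies in $q^{-1}(x_B)$. A fibrewise dimension count then forces that for $\sigma\in\Sigma$ of tropical dimension $e:=\dim\tropbar(\sigma)$ only the summand with $k=e$ contributes to the measure on $\relint(\sigma)$: smaller $k$ leaves too many $B$-directions for $q(X)$ to absorb, larger $k$ exceeds the tropical dimension of $\widetilde\sigma$.

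For the polytopal structure I would choose $\Sigma$ refining the $\qgamma$-decomposition of $S_X$ coming from Theorem~\ref{intro canonical subset and tropicalization} so that every $\sigma\in\Sigma$ maps affinely by $\tropbar$ onto a polytope $\tau_\sigma\subseteq N_\R/\trop(\Lambda)$ and every lift $\widetilde\sigma\subseteq\widetilde S_X\cap q^{-1}(x_B)$ maps affinely to a polytope in $N_\R$. Restricting the selected summand of the binomial expansion to $\widetilde\sigma$, the canonical factor collapses to a positive rational multiple $c_\sigma$ of the point mass at $x_B$ along the $B$-direction, computed as the $H$-degree of a generic fibre of $q|_X$ over $x_B$; the tropical factor $(dd^c(f\circ\trop))^{\wedge e}\wedge[\widetilde\sigma]$ equals $\trop^*\MA(f)$ on $\relint(\widetilde\sigma)$ by the standard identification of tropical Monge--Amp\`ere with the real Monge--Amp\`ere measure for convex functions, which can be justified within the weakly smooth forms formalism recalled in Appendix~\ref{section app diffforms}. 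Summing over the finitely many preimages $\widetilde\sigma$ of $\sigma$ under $p$ (finite by Theorem~\ref{intro canonical subset and tropicalization}) and pushing forward yields the multiplicity $m_\sigma\in\Q_{\geq 0}$ and the desired identity $c_1(L|_X,\metr)^{\wedge d}(\Omega)=m_\sigma\cdot\MA(f)(\Omega)$.

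The main obstacle will be the support assertion for the mixed binomial terms: it demands a precise compatibility between the Shilov skeleton of the good-reduction formal model of $B$, the tropical skeleton from Raynaud's uniformization of $A$, and the piecewise $\qgamma$-linear structure on $S_X$, all inside the framework of weakly smooth forms on $E^{\an}$. A secondary difficulty is the well-definedness of $m_\sigma$: both the rational degree of $\trop|_{\widetilde\sigma}$ and the canonical multiplicity $c_\sigma$ must be shown independent of the refinement $\Sigma$ and of the chosen lift $\widetilde\sigma$, which should follow from the $\Lambda$-equivariance encoded in the cocycle rule for $f$ together with the intrinsic character of the piecewise $\qgamma$-linear structure on $S_X$ established in Theorem~\ref{intro canonical subset and tropicalization}.
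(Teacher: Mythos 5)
Your proposal diverges substantially from the paper's route, and as written it has several genuine gaps.

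The paper proves the theorem by passing to a strictly polystable alteration $\varphi_0\colon \Xfrak'\to\Xfrak$ (Adiprasito--Liu--Pak--Temkin), approximating $f$ uniformly by piecewise $(\Q,\Gamma)$-linear convex functions whose domains of linearity are \emph{transversal} to the image of the skeleton (Proposition~\ref{transversal pl approximation}), and then evaluating the Chambert--Loir measure on the resulting Mumford models via the degree formula of Proposition~\ref{degree formula in vertex}. The real Monge--Amp\`ere measure appears there through the volume of the dual polytope at a vertex, and the prefactor is $\frac{d!}{e!}\deg_\Hfrak(\overline S)$ for an $e$-dimensional stratum $S$ of $\Xfrak'_s$. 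Theorem~\ref{intro main result} then follows from the projection formula \eqref{projection formula} combined with the piecewise linear structure of $S_X$ from Theorem~\ref{piecewise linear structure of canonical subset}. None of these ingredients appear in your proposal.

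The central gap in your argument is the treatment of $c_1(\widetilde L,p^*\metr)^{\wedge d}$. You write the decomposition $c_1(\widetilde L,p^*\metr)=dd^c(f\circ\trop)+q^*c_1(H,\metr_H)$ and then expand the $d$-th power binomially. For a general continuous convex $f$, $dd^c(f\circ\trop)$ is only a current, and the summands of your binomial expansion are wedge products of currents, which are not defined in the non-archimedean setting at this level of generality. The Chambert--Loir Monge--Amp\`ere measure for continuous semipositive metrics is defined as a weak limit of measures attached to nef model metrics, not by any wedge calculus. Even restricting to model metrics, the additivity of mixed measures in the Chern classes is a substantive result that must be proved via formal models, and one then still needs the delicate support/degree analysis, which you only gesture at with a ``fibrewise dimension count.'' Your assertion that the lift of $S_X$ lies in $q^{-1}(x_B)$ is plausible but would require exactly the non-degeneracy analysis (Remark~\ref{support and non-degenerate simplices}) that the paper carries out with polystable models.

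A further concrete discrepancy is your description of the multiplicity. You claim the abelian contribution is the ``$H$-degree of a generic fibre of $q|_X$ over $x_B$,'' but a fibre of $q$ is a torus torsor and the restriction of $H$ to it is trivial, so that degree is not meaningful. In the paper, the multiplicity on the polytope attached to a stratum $S$ is $\frac{d!}{e!}\deg_{G^*\Hfrak}(\overline S)$, a degree of the closure of a stratum of the special fiber of the polystable model mapped to $\Bfrak_s$. Recovering this number without a semistable/polystable model of $X$ and its stratification appears to require new ideas that your proposal does not supply. Finally, the identification of your ``tropical factor'' with $\MA(f)$ on a face of the skeleton is itself the heart of the matter; in the paper it comes out of the explicit combinatorics of Mumford models and the degree formula, and it is not a formal consequence of the weakly smooth forms formalism.
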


In the special case of the canonical metric, we can say more:

\begin{cor} \label{intro cor of main result}
The above polytopal decomposition $\Sigma$ has the property that for any ample line bundle $L$ of $A$, there is $r_\sigma \in \R_{\geq 0}$ associated to $\sigma \in \Sigma$ such that  
$$c_1(L|_X,\metr_L)^{\wedge d}= \sum_{\sigma \in \Sigma} r_\sigma \mu_\sigma$$
where $\mu_\sigma$ is a fixed choice of a Lebesgue measure on the polytope $\sigma \in \Sigma$.	
\end{cor}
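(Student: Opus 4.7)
The plan is to deduce the corollary from Theorem \ref{intro main result} applied to $\metr_L$ itself, once I identify the convex function $f_L$ corresponding to the canonical metric explicitly enough to compute its real Monge--Amp\`ere measure. Observe first that $\metr_L$ is trivially toric (take the modifying function in $\metr = e^{-\phi\circ\trop}\metr_L$ to be zero). Under the bijection of Theorem \ref{theo-intro-toric-metrics}, it therefore corresponds to a continuous convex function $f_L\colon N_\R \to \R$ characterized by $f_L\circ\trop = -\log(p^*\metr_L/q^*\metr_H)$ and the cocycle rule $f_L(\omega+\lambda)=f_L(\omega)+z_\lambda(\omega)$.

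The key step is to pin down the shape of $f_L$. For a symmetric ample $L$, the canonical metric satisfies the functional equation $[n]^*\metr_L = \metr_L^{\otimes n^2}$, and the rigidification forces $f_L$ to satisfy $f_L(n\omega)=n^2 f_L(\omega)$; combined with convexity and continuity, this forces $f_L$ to be a quadratic form $\tfrac{1}{2}\langle H\omega,\omega\rangle$, where $H$ is the symmetric bilinear form on $N_\R$ attached to the first Chern class (so that $z_\lambda(\omega) = \langle H\lambda,\omega\rangle + \tfrac{1}{2}\langle H\lambda,\lambda\rangle$ is indeed affine linear). For general ample $L$, a decomposition $L \cong L_{\sym}\otimes L_{\rm anti}$ with $L_{\rm anti}$ associated to a translation shows that $f_L$ differs from its symmetric counterpart by an affine function of $\omega$, which contributes trivially to the Monge--Amp\`ere measure. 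Consequently $\MA(f_L)$ is a constant multiple $c_L\det(H)$ of the Haar measure on $N_\R$, and restricts on each relative interior $\relint(\sigma)$, $\sigma \in \Sigma$, to a nonnegative multiple $c_{L,\sigma}\in\R_{\geq 0}$ of the chosen Lebesgue measure $\mu_\sigma$ on $\sigma$.

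Applying Theorem \ref{intro main result} to $\metr = \metr_L$ and $f = f_L$, we obtain for every Lebesgue measurable $\Omega\subseteq\relint(\sigma)$ the identity
\[
c_1(L|_X,\metr_L)^{\wedge d}(\Omega) \;=\; m_\sigma\cdot \MA(f_L)(\Omega) \;=\; m_\sigma\, c_{L,\sigma}\cdot \mu_\sigma(\Omega).
\]
Setting $r_\sigma \coloneqq m_\sigma\, c_{L,\sigma}\in\R_{\geq 0}$ and summing over $\sigma\in\Sigma$ produces the claimed piecewise Lebesgue decomposition, using that by Theorem \ref{intro canonical subset and tropicalization} the relative interiors of polytopes in $\Sigma$ partition $S_X$ up to lower-dimensional strata that carry no mass of $c_1(L|_X,\metr_L)^{\wedge d}$.

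The main obstacle I expect is the explicit identification of $f_L$ as a quadratic form (or piecewise quadratic in the non-symmetric case) and the verification that $\MA(f_L)$ is therefore a constant multiple of Lebesgue measure. This requires a careful analysis of the cocycle $(z_\lambda)$ attached via Raynaud's uniformization to the canonical rigidified metric $\metr_L$, and uses the uniqueness of convex solutions to the cocycle equation with the prescribed homogeneity. The argument parallels Gubler's computation in the discretely valued setting, but now at the level of real-valued convex functions on $N_\R$, and thus fits cleanly into the framework developed in the earlier sections of the paper rather than requiring the Deligne decomposition on a formal model.
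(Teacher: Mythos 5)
Your overall strategy — apply Theorem \ref{intro main result} with $\metr = \metr_L$ and then observe that the corresponding convex function $f_L$ is quadratic, so its real Monge--Amp\`ere measure is a constant multiple of Lebesgue on each $\relint(\sigma)$ — is exactly the route the paper takes in its Corollary \ref{canonical subset and canonical measure} (which is where the paper actually establishes this), where the quadratic nature of $f_{\metr_L}\circ f_{\rm aff}$ is invoked in Remark \ref{support and non-degenerate simplices}. So the approach matches the paper's.

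However, there is a genuine gap in the step where you justify that $f_L$ is a quadratic form. You write that $f_L(n\omega)=n^2 f_L(\omega)$ ``combined with convexity and continuity, forces $f_L$ to be a quadratic form.'' This implication is false in general: for example, $g(x,y)=(x^4+y^4)^{1/2}=\|(x,y)\|_4^2$ is continuous, convex and homogeneous of degree $2$ on $\R^2$ but is not a quadratic form. What actually forces $f_L$ to be quadratic is the cocycle rule, not convexity. Indeed, let $q$ denote the unique quadratic polynomial on $N_\R$ extending $\lambda \mapsto z_\lambda(0)$ (which exists by \eqref{symmetric bilinear form'} and \ref{line bundle on the uniformization}, as the paper explains in \ref{descend of Mumford models}); then $q$ satisfies the cocycle rule, so $f_L-q$ is $\trop(\Lambda)$-periodic and in particular bounded. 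If in addition $f_L(n\omega)=n^2 f_L(\omega)$ and $q(n\omega)=n^2 q(\omega)$ (the symmetric case), then $f_L-q$ is also degree-$2$ homogeneous, and a bounded degree-$2$ homogeneous function vanishes. The non-symmetric case adds an affine correction, as you say. But the ``convexity and continuity'' clause in your argument does not do the work you ascribe to it, and without invoking periodicity from the cocycle rule the deduction is incorrect.

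A smaller issue: near the end you cite Theorem \ref{intro canonical subset and tropicalization} to justify that the relative interiors in $\Sigma$ partition $S_X$. That theorem concerns the finiteness of $\tropbar|_{S_X}$ and is not what you need; the partition of $S_X$ by the $\relint(\sigma)$, $\sigma\in\Sigma$, is just part of $\Sigma$ being a polytopal decomposition of $S_X$ as guaranteed by Theorem \ref{intro main result} (or, in the paper, by the construction in Corollary \ref{canonical subset and canonical measure} via Theorem \ref{piecewise linear structure of canonical subset}).
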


We begin proving Theorem \ref{intro main result} by showing a variant (given in Theorem \ref{toric MA on X'}) for the pull-back to a strictly polystable alteration of $X$ where the support is contained in the union of the canonical faces of the skeleton which are non-degenerate with respect to the alteration. The existence of such a strictly polystable alteration follows from a result of Adiprasito, Liu, Pak and Temkin \cite{adiprasito_etal}. In Theorem \ref{piecewise linear structure of canonical subset}, we will see that the induced morphism from the union of these non-degenerate faces to $S_X$ is a piecewise $(\Q,\Gamma)$-linear surjective map  which is finite-to-one. Then Theorem \ref{intro main result} follows from the projection formula \eqref{projection formula}.

The structure of the paper is as follows. Section \ref{section notprel} fixes the notation and gives the preliminaries on convex geometry, non-archimedean geometry, formal models and semipositive metrics, and real and non-archimedean Monge--Amp\`ere measures. In Section \ref{section plappr}, we deal with piecewise linear convex approximations of convex functions in a purely combinatorial setting. The main result is Proposition \ref{transversal pl approximation} where we show that such an approximation is possible by preserving a cocycle rule. The approximations can be chosen such that the underlying domains of linearity are transversal to a given fixed set of polytopes. This will be crucial later. In Section \ref{section tormet}, we first recall Raynaud's uniformization theory. Then we 
introduce toric metrics and prove Theorem \ref{theo-intro-toric-metrics}. Finally, we recap the theory of formal Mumford models of an abelian variety $A$ over $K$. Mumford models have the advantage that they can be described in combinatorial terms on $\tropbar(A^\an)=N_\R/\trop(\Lambda)$. 

In Section \ref{section strictpolyalt}, we first recall strictly polystable alterations for a closed subvariety $X$ of $A$, the piecewise linear structure of the skeleton $\Sk(\Xfrak')$ of the underlying strictly polystable formal scheme $\Xfrak'$ over $\kcirc$ and that any polytopal decomposition of $\Sk(\Xfrak')$ leads to a formal model $\Xfrak''$ of the generic fiber of $\Xfrak'$ which dominates $\Xfrak'$. 
Then we relate this construction to the formal Mumford models of $A$ and  give a combinatorial formula for the  degree of an irreducible component of the special fiber of $\Xfrak''$ under a transversality assumption. All the material from Section \ref{section strictpolyalt} is a direct generalization of \cite[\S 5]{gubler-compositio} from the strictly semistable to the strictly polystable case. In Section \ref{section MA tormetr}, we prove the variant of Theorem \ref{intro main result} on the strictly polystable alteration. We use  the piecewise linear approximation from Proposition \ref{transversal pl approximation} to reduce to the piecewise linear case and then the claim is a direct consequence of the combinatorial degree formula from Section \ref{section strictpolyalt}. Finally, in Section \ref{section cansubset}, we prove the claims about the canonical subset.

\subsection*{Acknowledgements}
We thank Antoine Ducros for a fruitful discussion about skeletons of Berkovich spaces and we are grateful to Felix Herrmann for proofreading the text linguistically. We thank Jos\'e Burgos, Roberto Gualdi, Klaus K\"unnemann and Joe Rabinoff for comments to an earlier version of this paper. We are grateful to the referee for the careful reading and the valuable suggestions helping to improve the presentation.

\section{Notation and preliminaries}  \label{section notprel}

\subsection{Basic conventions} \label{subsection: notation} 
\addtocontents{toc}{\protect\setcounter{tocdepth}{1}}

The set of natural numbers $\N$ includes $0$.  A \emph{lattice} in a finite dimensional real vector space is a discrete subgroup which generates the vector space. For an abelian group $M$ and a subgroup $G$ of $\R$,  we set $M_G \coloneqq
M \otimes_\Z G$. 
By a compact space, we mean a quasi-compact Hausdorff space.

A ring is always assumed to be commutative and with $1$. 
The group of invertible elements in a ring $A$ is denoted by $A^\times$.
 A \emph{variety} over a field $F$ is an integral scheme which is of finite type and separated over $\Spec\, F$. 
 
By Bourbaki's approach to measure theory, a \emph{positive Radon measure} on a locally compact Hausdorff space $X$ can be seen as a positive linear functional on the space of compactly supported continuous real functions $C_c(X)$ of $X$. By the Riesz represention theorem, such a function is given by $f \to \int_X f(x) \, d\mu(x)$ for a unique regular Borel measure $\mu$ on $X$. A sequence of Radon measures $\mu_k$ is called \emph{weakly convergent} to a Radon measure $\mu$ on $X$ if 
$$\lim_k \int_X f(x) \, d\mu_k(x) = \int_X f(x) \, d\mu(x)$$
for all $f \in C_c(X)$.

\subsection{Convex geometry} \label{subsection congeom}

Let $N$ be a free abelian group of rank $n$ and $M=\Hom_\Z(N,\Z)$ its dual. 
A function $f\colon N_\R \to \R$ is called \emph{affine} if $f=u+c$ for some $u \in M_\R$ and $c \in \R$. 
Then $u$ is called the \emph{slope} of $f$.  
For a subring $A$ of $\R$ and a $A$-submodule $\Gamma$ of $\R$, we say that $f$ is \emph{$(A,\Gamma)$-affine} if $u \in M_A$ and $c \in \Gamma$. 

A finite intersection of half-spaces $\{f \leq 0\}$ for affine functions $f$ on $N_\R$ is called a \emph{polyhedron in $N_\R$}. It is called a \emph{$(A,\Gamma)$-polyhedron} if  the  affine functions $f$ can be chosen $(A,\Gamma)$-affine. A \emph{polytope} is a bounded polyhedron. 
The \emph{relative interior} of a polyhedron  $\sigma$ is denoted by $\relint(\sigma)$. 
For a polyhedron $\sigma$, a \emph{face} is  the intersection of $\sigma$ with the boundary of a half-space containing $\sigma$. 
By convention, we allow $\sigma$ and $\emptyset$ also as faces of $\sigma$. 
The notation  $\tau\prec\sigma$ means that $\tau$ is a face of  $\sigma$. 
A \emph{polyhedral complex} in $N_\R$ is a locally finite set $\Ccal$ of polyhedra in $N_\R$ such that for $\sigma\in \Ccal$, the faces of $\sigma$ are in $\Ccal$ and for  $\sigma,\rho \in \Ccal$ we have that $\sigma \cap \rho$ is a common face of $\sigma$ and $\rho$. 
The \emph{support} of $\Ccal$ is defined by  $|\Ccal| \coloneqq \bigcup_{\sigma \in \Ccal} \sigma$. For $k \in \N$, we set $\Ccal_k \coloneqq \{\sigma \in \Ccal \mid \dim(\sigma)=k\}$. 
 A function $f\colon C \to \R$ on a closed subset $C$ of $N_\R$ is called \emph{piecewise linear} if there is a polyhedral complex $\Ccal$ with support $C$ such that $f|_\sigma$ is affine for all $\sigma \in \Ccal$. If we can choose $\Ccal$ as a $(A,\Gamma)$-polyhedral complex (i.e.~ a polyhedral complex consisting of $(A,\Gamma)$-polyhedra) such that all $f|_\sigma$ are $(A,\Gamma)$-affine functions, then we call $f$  \emph{piecewise $(A,\Gamma)$-linear}.

More generally, a \emph{piecewise $(A,\Gamma)$-linear space} is a locally compact Hausdorff space $X$  with a compact atlas $(X_i)_{i \in I}$ by charts to $(A,\Gamma)$-polytopes in $\R^{n_i}$ such that the transition functions are (piecewise) $(A,\Gamma)$-affine and such that every point has a neighbourhood in $X$ given by a finite union of $X_i$'s. All the above notions are transferred to $X$ by using the polytopal charts. We refer to \cite[\S 0]{ducros12:squelettes_modeles} for more details.

\subsection{Real Monge--Amp\`ere measures} \label{subsection real MA}

Let $N$ be a free abelian group of rank $n$ with dual $M$ and let $f\colon \Omega \to \R$ be a convex function on an open convex subset $\Omega$ of $N_\R$. Then a  classical construction from real analysis gives the \emph{Monge--Amp\`ere measure} $\MA(f)$ which is a positive Radon measure on $\Omega$. Let $\lambda_N$ be the Haar measure on $N_\R$ normalized by requiring that the covolume of the lattice $N$ is one. 
For $f \in C^2(\Omega)$, we have
$$\MA(f) = n!\det\bigl((\partial_{ij}f)_{1 \leq i,j \leq n}\bigr)\lambda_N,\qquad \partial_{ij}f=\frac{\partial^{2} f}{\partial u_{i}\partial u_{j}},$$ 
where $u_1,\dots, u_n$ is a basis for $M$ viewed as coordinates on $N_\R$. For any convex function $f$, the construction of the \emph{Monge--Amp\`ere measure} $\MA(f)$ is local with respect to the open convex set $\Omega$ in $N_\R$ and continuous with respect to uniform convergence of convex functions and weak convergence of Radon measures. 

What we need is that for a conic piecewise linear function $f$ on $N_\R$ centered at $x\in N_\R$ (i.e.~ $f(r(\omega-x)+x)-f(x)=r(f(\omega)-f(x))$ for all $\omega \in N_\R$ and $r>0$), the measure $\MA(f)$ is the Dirac measure at $x$ with total mass equal to the volume of the dual polytope $\{x\}^f$ with respect to the Haar measure $\lambda_M$ on $M_\R$ normalized such that the lattice $M$ has covolume $1$. Here, the dual polytope of $f$ is defined by  
$\{x\}^f=\{u\in M_\R \mid \textit{$f(\omega)-f(x)\geq\langle
\omega-x,u\rangle$ for all $\omega \in N_\R$}\}$. 
We refer to \cite[\S 2.7]{bps-asterisque} for details (replacing convex functions by concave functions).

\subsection{Non-archimedean geometry} \label{subsection nonargeom}

A  \emph{non-archimedean field} is
 a field $K$  complete with respect to
a given ultrametric absolute value $|\phantom{a}|\colon K\to \R_{\geq 0}$. The  \emph{valuation} is $v \coloneqq -\log|\phantom{a}|$ and $\Gamma \coloneqq v(K^\times)$ is the \emph{value group}. The \emph{valuation ring} is denoted by $K^\circ \coloneqq \{\alpha \in K \mid v(\alpha)\geq 0\}$ with maximal ideal $K^{\circ\circ}\coloneqq \{\alpha \in K \mid v(\alpha)> 0\}$ and \emph{residue field} $\ktilde \coloneqq \kcirc/K^{\circ \circ}$. 

We  consider  \emph{good} non-archimedean analytic spaces as introduced by Berkovich in \cite{berkovich-book}. They are characterized by the fact that every point has an affinoid neighbourhood. We are occupied with \emph{strictly analytic spaces} where we can use a closed analytic subspace of a unit ball for this affinoid analytic neighbourhood. We assume that the reader is familiar with the  notions from \cite{berkovich-book}.
We apply this to 
the \emph{analytification}  $\Xan$ of a variety $X$ over $K$. 

For a point $x$ in a strictly analytic Berkovich space $X$, we have the completed residue field $\mathscr H(x)$. We call $x$ an \emph{Abhyankar point} if the transcendence degree of the graded residue field of $\mathscr H(x)$ over $K$ is equal to the local dimension of $x$ at $X$ (in general, we have ``$\leq$''
 which is Abhyankar's inequality). 
 \walter{In classical terms, this graded transcendence degree   is  $r+d$, where $r$ is the transcendence degree 
 	of the usual reductions $(\mathscr H(x))^{\sptilde}$ 
 	over $\ktilde$  and $d$ is the dimension of the $\Q$-vector space 
 	$(|\mathscr H(x)^\times|\otimes \Q) / (|K^\times|\otimes \Q)$ build from the multiplicative value groups.}
The important point is that for a point $y$ in a closed analytic subspace $Y$ of $X$, the completed residue field $\mathscr H(y)$ is the same for $Y$ as for $X$ and hence if the local dimension of $Y$ at $y$ is strictly smaller than the dimension of $X$ at $y$, it follows that $y$ is not an Abhyankar point of $X$. We refer to \cite[\S 1.4]{ducros18:families} for details.

\subsection{Formal models and semipositive metrics}  \label{subsection fomospm}

We consider a non-trivially valued algebraically closed non-archimedean field $K$.
Let $\Xfrak$ be an \emph{admissible formal scheme over $\kcirc$} which means that $\Xfrak$ is a flat formal scheme over $\kcirc$ locally of topologically finite type such that $\Xfrak$ has a locally finite atlas by formal affine schemes over $\kcirc$. The \emph{generic fiber} of $\Xfrak$ is a paracompact strictly analytic Berkovich space over $K$ which we will denote by $\Xfrak_\eta$. 
\walter{This generic fiber is not necessarily a good analytic space, but we will use it later only in the good case as in the algebraic situation below.}
The \emph{special fiber} $\Xfrak_s$ is a scheme locally of finite type over $\kcirc$ and we have a \emph{reduction map} $\red\colon \Xfrak_\eta \to \Xfrak_s$. For details, we refer to \cite[\S 1.6]{berkovich-ihes} and \cite{bosch14:lectures_formal_rigid_geometry}.

\walter{Let $X$ be a paracompact strictly analytic space over $K$.  
A \emph{formal $\kcirc$-model} of $X$ is an admissible formal scheme $\Xfrak$ over $\kcirc$ with an identification $\Xfrak_\eta=X$. Let $L$ be a line bundle on $X$. Then a \emph{formal model $\Lfrak$ of $L$} is a line bundle $\Lfrak$ on a formal $\kcirc$-model $\Xfrak$ of $X$ such that $\Lfrak|_{\Xfrak_\eta}= L$ along the identification $\Xfrak_\eta=X$. We call $\Lfrak$ \emph{nef} if the line bundle $\Lfrak|_{\Xfrak_s}$ is nef on the special fiber $\Xfrak_s$. The latter means by definition that for every closed curve $Y$ of $\Xfrak_s$, which is proper over the residue field $\ktilde$, we have $\deg_\Lfrak(Y)\geq 0$.}

\walter{A formal model $\Lfrak$ of $L$ induces a continuous metric $\metr_\Lfrak$ on $L$, see \cite[Definition 2.5]{gubler-martin}. A metric $\metr$ on $L$ is called a \emph{model metric} if there is a non-zero $k \in \N$ such that $\metr^{\otimes k}$ is induced by a formal model $\Lfrak$ of $L^{\otimes k}$. We call a model metric \emph{nef} if $\Lfrak$ can be chosen as a nef line bundle. 
A continuous metric on $L$ is called \emph{semipositive} if it is a uniform limit of nef model metrics on $L$.}

\walter{In this paper, we work mainly in the algebraic setting where $X=Y^\an$ for a proper algebraic variety $Y$ over $K$. 
 Then one can replace formal $\kcirc$-models by proper algebraic models of $Y$ over $\kcirc$ (see \cite[\S 2]{gubler-martin}), but working with formal models allows additional flexibility and is more convenient.}


\subsection{Non-archimedean Monge--Amp\`ere measures} \label{subsection nonarMA}
\walter{Let $X$ be a proper algebraic variety over $K$ and let $L$ be a line bundle over $X$.}
A construction originated by Chambert--Loir \cite{chambert-loir-2006} associates to a model metric $\metr$ of $L$ a discrete measure $c_1(L,\metr)^{\wedge n}$ on $\Xan$. This can be used to define the \emph{Monge--Amp\`ere measure} $c_1(L,\metr)^{\wedge n}$ for any continuous semipositive metric $\metr$ of $\Lan$ by using that for a uniform limit of nef model metrics $\metr_k$ on $\Lan$ the corresponding sequence of measures $c_1(L,\metr_k)^{\wedge n}$ converges weakly in the sense of positive Radon measures, see \cite[\S 2]{gubler-2007b}. 

We briefly recall the construction of the Monge--Amp\`ere measure for $\metr_\Lfrak$. Since $K$ is algebraically closed, the formal models of $\Xan$ with reduced special fiber are cofinal \walter{in the category of all formal models of $\Xan$ with respect to morphisms extending the identity of $\Xan$ \cite[Proof of Proposition 3.5]{gubler-martin}.} \walter{This form of the reduced fiber theorem and the projection formula allow us to}  assume that $\Lfrak$ is a line bundle on a formal $\kcirc$-model with $\Xfrak_s$ reduced. Then for every irreducible component $Y$ of $\Xfrak_s$, there is a unique point $\xi_Y \in \Xan$ such that $\red{(\xi_Y)}$ is the generic point of $\Xfrak_s$. Such points are called \emph{Shilov points} for $\Xfrak_s$. We set
\begin{equation*} \label{definition Chambert-Loir measure}
c_1(L,\metr_\Lfrak)^{\wedge n} \coloneqq \sum_Y \deg_\Lfrak(Y) \cdot \delta_{\xi_Y}
\end{equation*}
where $Y$ ranges over all irreducible components of $\Xfrak_s$ and where $\delta_{\xi_Y}$ is the Dirac measure in the Shilov point $\xi_Y$.

\section{Piecewise linear approximation} \label{section plappr}

In this section, we prove that convex functions can be approximated by suitable generic piecewise linear functions in a setup later used for tropicalizations of abelian varieties.

The setup is as follows: Let $\Gamma$ be a non-trivial divisible subgroup of $\R$. In the applications, it will be the value group of a non-trivially valued algebraically closed non-archimedean field $K$. We consider a free abelian group $N$ of rank $n$ and a lattice $\Lambda$ in the base change $N_\R$ of $N$ to $\R$. 
Later, these data will come naturally from the canonical tropicalization of an abelian variety over $K$. The dual group of $N$ is denoted by $M \coloneqq \Hom(N,\Z)$. 
\walter{For $m \in M_\R$ and $\omega \in N_\R$, we set $\langle m, \omega \rangle \coloneqq m(\omega)\in \R$.}

We also fix a  set $\Sigma$ of polytopes in $N_\R$. We assume that for $\Delta \in \Sigma$, all faces of $\Delta$ are also included in $\Sigma$. 
\walter{In the following, we will consider a locally finite polytopal decomposition $\Ccal$ of $N_\R$ (or more generally of a piecewise linear space) which means a polyhedral complex $\Ccal$ consisting of polytopes such that the support of $\Ccal$ is the whole ambient space $N_\R$. For a polytope $\sigma$ in $N_\R$, we will use the  linear subspace $\L_\sigma$ of $N_\R$ generated by $\{\omega-\nu \mid \omega,\nu \in \sigma\}$  and the  affine subspace $\A_\sigma$ of $N_\R$ generated by $\sigma$.}

\begin{definition} \label{transversal decomposition}
	A locally finite polytopal decomposition $\Ccal$ of $N_\R$ is called \emph{$\Sigma$-transversal} if for all $\sigma \in \Sigma$ and all $\Delta \in \Ccal$ with $\sigma \cap \Delta \neq \emptyset$, we have 
\begin{equation} \label{transversality for polytopes}
\dim(\sigma \cap \Delta)=\dim(\Delta)+\dim(\sigma)-n.
\end{equation}
\end{definition}

\begin{rem} \label{expected dimension}
Let $D(\sigma,\Delta) \coloneqq \dim(\Delta)+\dim(\sigma)-n$. Recall from linear algebra the dimension formula 
\begin{equation} \label{dimension formula}
\dim(\L_\sigma \cap \L_\Delta)= \dim(\Delta)+\dim(\sigma)-\dim(\L_\sigma+\L_\Delta)
\end{equation}
for the underlying linear spaces. Transversal intersection of $\L_\sigma$ and $\L_\Delta$ usually means that $\L_\sigma+\L_\Delta = N_\R$ which is equivalent to $\dim(\L_\sigma \cap \L_\Delta)=D(\sigma,\Delta)$. 
\end{rem}

\begin{lemma} \label{transversality lemma}
A locally finite polytopal decomposition $\Ccal$ of $N_\R$ is $\Sigma$-transversal if it satisfies the following two conditions for all $\sigma \in \Sigma$ and $\Delta \in \Ccal$ with underlying affine spaces $\A_\sigma, \A_\Delta$:
\begin{enumerate}
	\item \label{first condition for transversality}
	If $D(\sigma,\Delta)\geq 0$, then $\L_\sigma + \L_\Delta = N_\R$.
	\item \label{second condition for transversality}
	If $D(\sigma,\Delta)<0$, then $\A_\sigma \cap \A_\Delta =\emptyset$.
\end{enumerate}
\end{lemma}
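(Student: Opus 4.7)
The overall plan is to verify, for each pair $\sigma\in\Sigma$ and $\Delta\in\Ccal$ with $\sigma\cap\Delta\neq\emptyset$, the equality $\dim(\sigma\cap\Delta)=D(\sigma,\Delta)$ by bounding both sides. The upper bound is essentially a statement at the affine-span level: any $x\in\sigma\cap\Delta$ witnesses $\A_\sigma\cap\A_\Delta\neq\emptyset$, so the contrapositive of \textit{(ii)} forces $D(\sigma,\Delta)\geq 0$, and then \textit{(i)} combined with the dimension formula \eqref{dimension formula} of Remark~\ref{expected dimension} gives $\dim(\L_\sigma\cap\L_\Delta)=D(\sigma,\Delta)$. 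Since $\A_\sigma\cap\A_\Delta$ is a translate of $\L_\sigma\cap\L_\Delta$, the inclusion $\sigma\cap\Delta\subseteq\A_\sigma\cap\A_\Delta$ yields $\dim(\sigma\cap\Delta)\leq D(\sigma,\Delta)$.

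For the reverse inequality, my plan is to work locally at a generic point. Choose $x$ in the relative interior of the polytope $\sigma\cap\Delta$, and let $\sigma'\preceq\sigma$ and $\Delta'\preceq\Delta$ be the unique minimal faces with $x\in\relint(\sigma')\cap\relint(\Delta')$. Since $\Sigma$ is closed under faces and $\Ccal$ is a polyhedral complex, $\sigma'\in\Sigma$ and $\Delta'\in\Ccal$, so \textit{(i)} and \textit{(ii)} apply to the pair $(\sigma',\Delta')$ and give $\L_{\sigma'}+\L_{\Delta'}=N_\R$, hence $\dim(\A_{\sigma'}\cap\A_{\Delta'})=D(\sigma',\Delta')$. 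Because $x$ lies in the relative interior of both $\sigma'$ and $\Delta'$, a sufficiently small neighborhood $W$ of $x$ in $\A_{\sigma'}\cap\A_{\Delta'}$ sits inside $\sigma'\cap\Delta'\subseteq\sigma\cap\Delta$, producing the a priori bound $\dim(\sigma\cap\Delta)\geq D(\sigma',\Delta')$.

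The main obstacle is to upgrade this bound to $D(\sigma,\Delta)$ by showing $\sigma'=\sigma$ and $\Delta'=\Delta$, i.e. that our generic point $x$ actually lies in $\relint(\sigma)\cap\relint(\Delta)$. The argument I have in mind is by contradiction: suppose for instance $\sigma'\subsetneq\sigma$, and consider the face $\tilde\sigma\preceq\sigma$ with $\sigma'\prec\tilde\sigma$ and $\dim\tilde\sigma=\dim\sigma'+1$. Applying \textit{(i)}, \textit{(ii)} to the pair $(\tilde\sigma,\Delta')$ yields $\L_{\tilde\sigma}+\L_{\Delta'}=N_\R$, so there is a direction $v\in\L_{\tilde\sigma}\cap\L_{\Delta'}$ not lying in $\L_{\sigma'}$. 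Since $x\in\relint(\sigma')$ with $\sigma'$ a face of $\tilde\sigma$, a small movement $x+tv$ with $t>0$ enters $\relint(\tilde\sigma)$ while remaining in $\A_{\Delta'}$, hence in a neighborhood of $x$ inside $\Delta'\subseteq\Delta$. Thus $x+tv\in\sigma\cap\Delta$ lies in a face of $\sigma$ strictly larger than $\sigma'$, contradicting the minimality assumption forced by $x\in\relint(\sigma\cap\Delta)$. A symmetric argument handles $\Delta'\subsetneq\Delta$. In effect, \textit{(i)} and \textit{(ii)} enforce $\relint(\sigma\cap\Delta)\subseteq\relint(\sigma)\cap\relint(\Delta)$, giving $D(\sigma',\Delta')=D(\sigma,\Delta)$ and completing the proof.
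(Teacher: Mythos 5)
Your proof is correct, and it takes a different route from the paper's, though both reduce to the same key claim: that $\sigma \cap \Delta \neq \emptyset$ forces $\relint(\sigma) \cap \relint(\Delta) \neq \emptyset$, after which the conclusion follows from the dimension formula \eqref{dimension formula} as you both observe. The paper proves this claim globally: it takes a pair $(\sigma,\Delta)$ minimal in the face partial order with $\relint(\sigma) \cap \relint(\Delta) = \emptyset$, uses the symmetry of the roles to find a codimension-one face $\sigma'$ of $\sigma$ meeting $\Delta$, and derives a contradiction from the dimension drop $D(\sigma',\Delta) < D(\sigma,\Delta)$ combined with the equality $\A_\sigma \cap \A_\Delta = \A_{\sigma'}\cap\A_\Delta$. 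You instead work locally at a point $x \in \relint(\sigma \cap \Delta)$: you pass to the unique faces $\sigma',\Delta'$ with $x$ in their relative interiors, climb one step $\tilde\sigma$ in the face lattice of $\sigma$, apply the hypothesis to $(\tilde\sigma,\Delta')$ to produce an escape direction $v \in (\L_{\tilde\sigma}\cap\L_{\Delta'})\setminus\L_{\sigma'}$, and push $x$ along $v$ to land in $\relint(\tilde\sigma)\cap\sigma\cap\Delta$. Your route trades the paper's WLOG on pairs for a more concrete one-step lift in the face lattice. The one place where your write-up is a bit terse is the concluding contradiction: the phrase ``contradicting the minimality assumption forced by $x\in\relint(\sigma\cap\Delta)$'' should be made precise. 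What you are using is the standard fact that a convex set $P$ with a relative interior point $x$, contained in a polytope $Q$, is contained in the unique face of $Q$ having $x$ in its relative interior; applied with $P = \sigma\cap\Delta$ and $Q = \sigma$, this gives $\sigma\cap\Delta \subseteq \sigma'$, whereas $x+tv \in \sigma\cap\Delta$ lies in $\relint(\tilde\sigma)$ which is disjoint from $\sigma'$. With that fact made explicit (and the usual sign adjustment $v \mapsto -v$ so that $x+tv$ really does enter $\relint(\tilde\sigma)$ for $t>0$), the argument is complete.
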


\begin{proof}
The argument follows \cite[Proposition 8.2]{gubler-2007a}.
	Assume that $\sigma \cap \Delta \neq \emptyset$. Then by \ref{first condition for transversality} and \ref{second condition for transversality}, we have $\L_\sigma+\L_\Delta=N_\R$. Using $\sigma \cap \Delta \neq \emptyset$ and Remark \ref{expected dimension}, we get 
	\begin{equation} \label{conclusion from two assumptions}
	\dim(\A_\sigma \cap \A_\Delta)= 
	 \dim(\L_\sigma \cap \L_\Delta)=D(\sigma,\Delta).  
	\end{equation}
	
	If $\relint(\sigma) \cap \relint(\Delta)\neq\emptyset$, then we have $\dim(\Delta \cap \sigma)=\dim(\A_\sigma \cap \A_\Delta)$ and \eqref{transversality for polytopes} follows from \eqref{conclusion from two assumptions}.
	It remains to see that $\relint(\sigma) \cap \relint(\Delta)=\emptyset$ cannot happen. We argue by contradiction. We may assume that $\Delta$ and $\sigma$ are minimal with $\relint(\sigma) \cap \relint(\Delta)=\emptyset$. Using that the roles of $\sigma$ and $\Delta$ are symmetric, we may assume that there is a proper face $\sigma'$ of $\sigma$ of codimension $1$ with $\sigma' \cap \Delta \neq \emptyset$. Note that $\A_{\sigma'}$ divides $\A_\sigma$ into two half-spaces, and precisely one contains $\sigma$.
	By minimality, we have $\relint(\sigma') \cap \relint(\Delta)\neq\emptyset$. 
	Using also $\relint(\sigma) \cap \relint(\Delta)=\emptyset$, we deduce that $\A_\sigma \cap \relint(\Delta) \subset \A_{\sigma'}$ and hence 
	$\A_\sigma \cap \A_\Delta =  \A_{\sigma'} \cap \A_\Delta$. Since $\dim(\sigma')<\dim(\sigma)$, we have $D(\sigma',\Delta)<D(\sigma,\Delta)$ which contradicts \eqref{conclusion from two assumptions} applied to $\sigma$ and $\sigma'$.
\end{proof}

\begin{definition} \label{periodic decompositions}
A locally finite polytopal decomposition $\Ccal$  of $N_\R$ is called \emph{$\Lambda$-periodic} if for all $\Delta \in \Ccal$ and for all $\lambda \in \Lambda\setminus\{0\}$ the polytope $\Delta + \lambda$ is a face of $\Ccal$ disjoint from $\Delta$. 	
\end{definition}

These conditions ensure that we can see the image $\overline \Delta$ of $\Delta$ in $N_\R/\Lambda$ as a polytope in $N_\R/\Lambda$ and that the set of all $\overline \Delta$ is a polytopal decomposition $\overline \Ccal$ of $N_\R/\Lambda$.

\begin{art} \label{cocycle}
\walter{We fix now the following data for the remaining part of this section.}
We fix a positive definite inner product $b$ on $N_\R$
such that
\begin{equation} \label{rationality of b}
\walter{b(\lambda,x) \in \Z}
\end{equation}
for all $x\in N$ and for all $\lambda \in \Lambda$. 
In the applications, such a bilinear form $b$ will be induced by an ample line bundle on the abelian variety $A$. 

We also consider a \emph{$\Lambda$-cocycle} $(z_\lambda)_{\lambda \in \Lambda}$ on $N_\R$, i.e.~functions $z_\lambda\colon N_\R \to \R$ satisfying 
\begin{equation} \label{cocycle condition}
z_{\lambda+\nu}(\omega)=z_\lambda(\omega+\nu)+z_\nu(\omega)
\end{equation}
for all $\lambda,\nu \in \Lambda$ and all $\omega \in N_\R$. We assume that the properties
\begin{equation} \label{symmetric bilinear form}
z_\lambda(\omega)=z_\lambda(0)+b(\lambda,\omega)
\end{equation}
and 
\begin{equation} \label{quadratic rationality property}
z_\lambda(0) \in \Gamma
\end{equation}	
hold for 
 all $\lambda \in \Lambda$ and $\omega \in N_\R$. 
 Using \eqref{cocycle condition} and \eqref{symmetric bilinear form}, we see that 
$$z_{\lambda + \nu}(0)-z_\lambda(0)-z_\nu(0)=z_\lambda(\nu)-z_\lambda(0)=b(\lambda, \nu).$$
It follows that $\walter{z_\lambda(0)}$ is a quadratic function \walter{in $\lambda$} with associated symmetric bilinear form $b$.
\end{art}

\walter{It is in \eqref{quadratic rationality property} that the fixed divisible subgroup $\Gamma$ of $\R$ shows up first.}
\walter{\begin{lem} \label{rationality consequences}
The assumptions on the bilinear form $b$ and on the cocycle $(z_\lambda)_{\lambda \in \Lambda}$ in \ref{cocycle} yield  
$b(\lambda, y) \in \Gamma$ and $\langle m, \lambda \rangle \in \Gamma$  for all $y,\lambda \in \Lambda_\Q$ and $m \in M$. 
\end{lem}	
\begin{proof}
Since $z_\lambda(0)$ is a quadratic function in $\lambda \in \Lambda$ with associated symmetric bilinear form $b$, we deduce that 
$$z_\lambda(0)+z_{-\lambda}(0)= b(\lambda,\lambda)$$
and hence we deduce from \eqref{quadratic rationality property} that $b(\lambda,\lambda) \in \Gamma$. Using that $\Gamma$ is divisible, this holds even for all $\lambda \in \Lambda_\Q$. For any $y \in \Lambda_\Q$ and using again that $\Gamma$ is divisible, we have 
$$b(\lambda,y)=\frac{1}{2}\left(b(\lambda+y,\lambda+y) -b(\lambda,\lambda)-b(y,y)\right) \in \Gamma.$$
The non-degeneracy of $b$ and  \eqref{rationality of b} show that 
$$\Lambda_\Q \longrightarrow M_\Q, \quad \lambda \longmapsto b(\lambda,\cdot)$$
is an isomorphism of $\Q$-vector spaces of dimension $n$. For  $m \in M_\Q$, we conclude that there is  $\lambda \in \Lambda_\Q$ with $\langle m, \cdot \rangle = b(\lambda,\cdot)$. For $y \in \Lambda_\Q$, we get $\langle m, y \rangle = b(\lambda, y) \in \Gamma$.
\end{proof}}
 
\begin{definition} \label{cocycle rule for function}
\walter{Using a fixed cocycle $(z_\lambda)_{\lambda \in \Lambda}$ as above,} we say that $f\colon N_\R \to \R$ satisfies the \emph{cocycle rule} if 
$$f(\omega+\lambda)= f(\omega)+ z_\lambda(\omega)$$
for all $\omega \in N_\R$ and $\lambda \in \Lambda$. 
\end{definition}

These functions may be seen as tropical theta-functions, see \cite{mikhalkin-zharkov2008} and \cite{foster-rabinoff-shokrieh-soto}. 

\walter{In the following, we will call a piecewise linear function $f \colon N_\R \to \R$ \emph{strictly convex with respect to a locally finite polytopal decomposition $\Ccal$ of $N_\R$} if the function $f$ is convex and if for all $\sigma \in \Ccal$ we have $f|_\sigma = u_\sigma + c_\sigma$ with $u_\sigma \in M_\R$ and $c_\sigma \in \R$ such that the slopes $u_\sigma$ are different for different maximal faces $\sigma \in \Ccal$.
	This is similar to the notion used in toric geometry \cite{fulton-toric-varieties} and should not be confused with the notation of strictly convex functions in analysis which has a different meaning.} 

We can now state the main result of this section.

\begin{prop} \label{transversal pl approximation}
Let $\Sigma$ be a finite set of polytopes in $N_\R$. We assume that $\Sigma$ includes with a polytope also all its faces. Under the above assumptions, we consider a convex function $f\colon N_\R \to \R$ satisfying the cocycle rule from Definition \ref{cocycle rule for function}. Then $f$ is the uniform limit of  functions $f_k$ satisfying the same cocycle rule  such that every  $f_k$ is a piecewise $(\Q,\Gamma)$-linear strictly convex function with respect to a locally finite $(\Z,\Gamma)$-polytopal decomposition $\Ccal_k$ of $N_\R$ which is $\Lambda$-periodic and $\Sigma$-transversal.
\end{prop}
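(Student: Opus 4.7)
The plan is to build on the approximation result by piecewise linear convex functions satisfying the cocycle rule from \cite{burgos-gubler-jell-kuennemann2} (already invoked in the introduction for Theorem \ref{theo-intro-toric-metrics}), which provides convex piecewise $(\Q,\Gamma)$-linear approximations $\hat f_k \to f$ satisfying the cocycle rule with locally finite $\Lambda$-periodic $(\Z,\Gamma)$-polytopal decompositions. Starting from these, I would upgrade to strict convexity by adding a small strictly convex reference function with the same cocycle behaviour, and then enforce $\Sigma$-transversality via a small generic translation compatible with the cocycle rule.

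For the reference function, the cocycle identities \eqref{cocycle condition} and \eqref{symmetric bilinear form}, combined with \eqref{rationality of b} and \eqref{quadratic rationality property}, imply that
\[
\tilde z_\lambda \coloneqq z_\lambda(0) - \tfrac{1}{2}b(\lambda,\lambda)
\]
is additive in $\lambda \in \Lambda$ with values in $\Gamma$ (using divisibility of $\Gamma$ to absorb the factor $\tfrac{1}{2}$). This motivates the Mumford--theta-type construction
\[
h(\omega) \coloneqq \sup_{\lambda \in \Lambda}\bigl[\, b(\lambda,\omega) - \tfrac{1}{2}b(\lambda,\lambda) + \tilde z_\lambda \,\bigr].
\]
Positive-definiteness of $b$ guarantees that this supremum is locally a finite maximum, so $h$ is continuous, strictly convex, and piecewise $(\Q,\Gamma)$-linear on a Voronoi-type decomposition that is locally finite and $\Lambda$-periodic. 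A direct substitution $\lambda \mapsto \lambda+\mu$ in the supremum, together with additivity of $\tilde z$, shows that $h$ satisfies the cocycle rule. Consequently $h - f$ is $\Lambda$-periodic and hence uniformly bounded.

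Now set $f_k \coloneqq (1-\epsilon_k)\,\hat f_k + \epsilon_k\, h$ for a sequence $\epsilon_k \searrow 0$. Since $\hat f_k$ and $h$ both satisfy the cocycle rule and the coefficients sum to one, $f_k$ inherits the cocycle rule; it is strictly convex because $h$ is, piecewise $(\Q,\Gamma)$-linear on the common refinement of the underlying decompositions, and converges uniformly to $f$ using the uniform bound on $h-f$ and $\|\hat f_k - f\|_\infty \to 0$. To enforce $\Sigma$-transversality I would then apply a small translation: a direct computation using \eqref{symmetric bilinear form} shows that
\[
f_k^{v}(\omega) \coloneqq f_k(\omega - v) + b(v,\omega)
\]
still satisfies the cocycle rule for any $v \in N_\R$, is strictly convex, and has underlying polytopal decomposition $\mathcal{C}_k + v$. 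By Lemma \ref{transversality lemma}, the set of $v \in N_\R$ for which $\mathcal{C}_k + v$ fails to be $\Sigma$-transversal is contained in a locally finite union of proper affine subspaces of $N_\R$ (using that $\Sigma$ is finite and $\mathcal{C}_k$ is $\Lambda$-periodic, so only finitely many $\Lambda$-orbits need to be checked). Picking $v_k$ arbitrarily small in a dense $(\Q,\Gamma)$-compatible subset of $N_\R$ that avoids this bad set produces the required sequence.

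The main obstacle is this final perturbation step: $v_k$ must be chosen so that simultaneously $f_k^{v_k}$ remains piecewise $(\Q,\Gamma)$-linear on a $(\Z,\Gamma)$-polytopal decomposition (which constrains the allowed translations through the slopes of $f_k$ and the integrality of $b(v,\cdot)$), the cocycle rule persists, the decomposition becomes $\Sigma$-transversal, and $v_k \to 0$. The existence of such translations in an appropriate dense subset of $N_\R$ is a genericity argument analogous to the one used in \cite[Proposition 8.2]{gubler-2007a} that underlies Lemma \ref{transversality lemma}.
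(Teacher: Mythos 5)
Your strategy of using a tropical theta function to enforce strict convexity and then translating to achieve $\Sigma$-transversality is genuinely different from the paper's, which instead subdivides barycentrically and then perturbs each affine piece $(m_\Delta,c_\Delta)$ in $M_\Q\times\Gamma$. The theta-function construction of $h$ is correct: indeed $\tilde z_\lambda\coloneqq z_\lambda(0)-\tfrac12 b(\lambda,\lambda)$ is additive with values in $\Gamma$ (note $b(\lambda,\lambda)=z_{2\lambda}(0)-2z_\lambda(0)\in\Gamma$ first, then use divisibility), and after absorbing the linear extension of $\tilde z$ into a translation, $h$ is the standard Voronoi-type theta function and is strictly convex. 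Your translation $f_k^v(\omega)=f_k(\omega-v)+b(v,\omega)$ does preserve the cocycle rule. The arithmetic compatibility you flag as the main obstacle can in fact be handled: since $\Lambda\subset N_\Gamma$ and $b(\Lambda,\cdot)\subset M$, the dense $\Q$-subspace $\Lambda_\Q$ lies in $N_\Gamma\cap b^{-1}(M_\Q)$ and satisfies $m_\Delta(\Lambda_\Q)\subset\Gamma$ for $m_\Delta\in M_\Q$, and a dense set must meet the open complement of the nowhere dense bad locus near $0$.

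However, there is a genuine gap you do not address: $\Lambda$-periodicity in the strong sense of Definition \ref{periodic decompositions}, which requires $\Delta$ and $\Delta+\lambda$ to be \emph{disjoint}, not merely both in the complex. The Voronoi cells for $h$ tile $N_\R$ and therefore touch their $\Lambda$-translates along their boundaries; taking the common refinement with the decomposition underlying $\hat f_k$ does not generically repair this, and a rigid translation by $v$ certainly cannot since it preserves the incidence pattern. The paper's barycentric-subdivision step exists precisely to break up the cells so that $\Delta\cap(\Delta+\lambda)=\emptyset$, and your proposal has nothing playing that role. A secondary issue: you invoke Lemma \ref{transversality lemma} to claim the set of bad $v$ is a locally finite union of proper affine subspaces, but condition \ref{first condition for transversality} of that lemma ($\L_\sigma+\L_\Delta=N_\R$) is translation-invariant, so if it fails for some pair it fails for \emph{all} $v$ and the lemma gives no useful information about translations. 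What saves the translation argument is that Definition \ref{transversal decomposition} only constrains pairs with $\sigma\cap\Delta\neq\emptyset$, and when $\L_\sigma+\L_\Delta\subsetneq N_\R$ the bad locus $\{v: \sigma\cap(\Delta+v)\neq\emptyset\}\subset\A_\sigma-\A_\Delta$ is contained in a proper affine subspace; you need to argue directly from the definition rather than through the lemma.
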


\walter{For the proof, we need a couple of lemmas first.}

\begin{lem} \label{convex piecewise linear functions}
	Let $f$ be a convex piecewise linear function satisfying the cocycle rule with respect to the given cocycle $z=(z_\lambda)_{\lambda \in \Lambda}$ as above. Then the following properties hold:
	\begin{enumerate}
		\item \label{maximal domains form decomposition}
		The maximal domains of linearity are the $n$-dimensional faces of a locally finite polytopal decomposition $\Dcal(f)$ of $N_\R$.
		\item \label{rationality of maximal domains}
		If $f$ is piecewise $(\Q,\Gamma)$-linear, then $\Dcal(f)$ is a $(\Z,\Gamma)$-polytopal decomposition.
		\item \label{translation of maximal domains}
		For $\Delta \in \Dcal(f)$ and $\lambda \in \Lambda$, we have  $\Delta+\lambda \in \Dcal(f)$. 
		\item \label{almost periodicity}
		For a maximal domain of linearity $\Delta$ of $f$ and a non-zero $\lambda \in \Lambda$, the open sets $\relint(\Delta)+\lambda$ and  $\relint(\Delta)$ are disjoint.
	\end{enumerate}
	\end{lem}

This shows that $\Dcal(f)$ is almost a $\Lambda$-periodic polytopal decomposition of $N_\R$, but we can only ensure that $(\Delta+\lambda)\cap \Delta$ is contained in the relative boundary of $\Delta$.

\begin{proof}
A \emph{convex} piecewise linear function can be written as  the maximum (supremum) of  affine functions. This shows that the maximal domains of linearity are the $n$-dimensional faces of a locally finite \emph{polyhedral} decomposition $\Dcal(f)$ of $N_\R$. If $f$ is piecewise $(\Q,\Gamma)$-linear, then this also shows that every $\Delta \in \Dcal(f)$ is $(\Z,\Gamma)$-linear. 

Let $\Delta$ be a maximal domain of linearity for $f$ and let $\lambda \in \Lambda$. Then assumption \eqref{symmetric bilinear form} shows that $\Delta+ \lambda$ is a (maximal) domain of linearity for $f$ proving 
 \ref{translation of maximal domains}. 

It remains to show that $\Delta$  is a polytope and that property \ref{almost periodicity} holds for $\Delta$.  Using the cocycle rule and that the coycles $z_\lambda$ grow quadratically in $\lambda \in \Lambda$, the polyhedron $\Delta$ is  indeed bounded and hence is a polytope. Since $\Delta+\lambda$ is also a domain of linearity for $f$, it is clear for $\lambda \in \Lambda \setminus \{0\}$ that $\relint(\Delta)+ \lambda$ is disjoint from $\relint(\Delta)$ as otherwise they would agree, hence $\Delta+\lambda=\Delta$ by maximality, and an inductive argument adding successively $\lambda$ would show that $\Delta$ is unbounded.
\end{proof}	

In the following, we fix a polytopal decomposition $\Ccal$ of $N_\R$ assuming that the lattice $\Lambda$ acts on $\Ccal$ by translation. 
 Recall that the set of maximal faces of $\Ccal$ is denoted by $\Ccal_n \coloneqq \{\sigma \in \Ccal \mid \dim(\sigma)=n\}$. We choose a fixed set of representatives $\Ncal$ for the $n$-dimensional faces of $\Ccal$ with respect to this $\Lambda$-action.

\begin{lem} \label{polytopes and representatives}
		A polytope $\Delta'$ is in $\Ccal_n$ if and only if it has the form 
		\begin{equation} \label{system of representatives for Ccal}
		\Delta' = \Delta + \lambda
		\end{equation}
		with $\Delta \in \Ncal$ and $\lambda \in \Lambda$. Moreover,  $\Delta$ and  $\lambda$ are uniquely determined by $\Delta'$.
\end{lem}

\begin{proof}		 
		This follows  from the definition of a system of representatives for a $\Lambda$-action.
\end{proof}
Recall that $b$ is a positive definite inner product on $N_\R$ satisfying the assumptions in \ref{cocycle}.
\begin{lem} \label{periodicity of piecewise linear functions}
		Let $f \colon N_\R \to \R$ be a piecewise linear function with respect to $\Ccal$, i.e.~for $\Delta \in \Ccal$, there is a slope $m_\Delta \in M_\R$ and a constant term $c_\Delta \in \R$ such that 
		\begin{equation} \label{slope and constant term}
		f = m_\Delta + c_\Delta
		\end{equation}
		on $\Delta$. Then $f$ satisfies the cocycle rule if and only if
		the condition
		\begin{equation} \label{cocycle rule and slopes}
		m_{\Delta+\lambda}= m_\Delta + b(\lambda,\cdot)
		\end{equation}
		for the slopes and the condition
		\begin{equation} \label{cocycle rule and constant terms}
		c_{\Delta+\lambda}=c_\Delta-\langle m_\Delta, \lambda \rangle +z_\lambda(0)-b(\lambda,\lambda)
		\end{equation}
		for the constant terms hold for all $\Delta \in \Ncal$ and for all $\lambda \in \Lambda$. In this case, $f$ is a piecewise $(\Q,\Gamma)$-linear function if and only if $m_\Delta \in M_\Q$ and $c_\Delta \in \Gamma$ for all $\Delta \in \Ncal$.
\end{lem}
\begin{proof}
	Using that $\Ccal$ is $\Lambda$-periodic and Lemma \ref{polytopes and representatives}, it is clear that  $f$ satisfies the cocycle rule if and only if for any $\Delta \in \Ncal$, we have
	\begin{equation} \label{cocycle rule and affine functions}
	\langle m_{\Delta+\lambda}, \omega \rangle + \langle m_{\Delta+\lambda}, \lambda \rangle +c_{\Delta+\lambda}
	= \langle m_\Delta , \omega \rangle +c_\Delta + z_\lambda(\omega)
	\end{equation}
	for any $\lambda \in \Lambda$ and $\omega \in N_\R$. Equivalently, using $z_\lambda(\omega)=z_\lambda(0)+b(\lambda,\omega)$ from \eqref{symmetric bilinear form}, we have
	the equations \eqref{cocycle rule and slopes} and \eqref{cocycle rule and constant terms}. 
	\walter{Finally, the last claim follows easily from  \eqref{cocycle rule and slopes} and \eqref{cocycle rule and constant terms} by using  \eqref{rationality of b}, \eqref{quadratic rationality property} and Lemma \ref{rationality consequences}.} 
\end{proof} 

Let $(m,c) \coloneqq (m_\Delta,c_\Delta)_{\Delta \in \Ncal} \in (M_\R \times \R)^\Ncal$. For $\Delta' \in \Ccal_n$, Lemma \ref{polytopes and representatives} shows that there are uniquely determined $\lambda \in \Lambda$ and $\Delta \in \Ncal$ such that $\Delta' = \Delta + \lambda$. Inspired by Lemma \ref{periodicity of piecewise linear functions}, we define
\begin{equation} \label{periodic extensions of slopes and constant terms}
m_{\Delta'} \coloneqq m_{\Delta+\lambda}\coloneqq m_\Delta + b(\lambda,\cdot)\quad \text{and} \quad c_{\Delta'} \coloneqq c_{\Delta+\lambda} \coloneqq c_\Delta-\langle m_\Delta, \lambda \rangle +z_\lambda(0)-b(\lambda,\lambda)
\end{equation}
and then 
\begin{equation} \label{resulting PL convex function}
f_{(m,c)} \coloneqq \sup \{ m_{\Delta+\lambda} + c_{\Delta + \lambda} \mid \Delta \in \Ncal, \, \lambda \in \Lambda\}.
\end{equation}

\begin{lem} \label{approximation function}
	For every  $(m,c) \coloneqq (m_\Delta,c_\Delta)_{\Delta \in \Ncal}\in (M_\R \times \R)^\Ncal$, the above procedure defines a convex piecewise linear $f_{(m,c)}$ satisfying the cocycle rule. If $(m,c) \in (M_\Q \times \Gamma)^\Ncal$, then $f_{(m,c)}$ is a piecewise $(\Q,\Gamma)$-linear function. 
\end{lem}

\begin{proof}
	For $\omega$ in a bounded domain of $N_\R$, the number $\langle m_{\Delta+\lambda}, \omega \rangle$ grows at most linearly in $\lambda \in \Lambda$, see \eqref{cocycle rule and slopes}. Since $z_\lambda(0)$ is a quadratic function in $\lambda$ with associated bilinear form $b$, we see that $2z_\lambda(0) - b(\lambda,\lambda)$ is a linear function in $\lambda$ and hence \eqref{cocycle rule and constant terms} shows that the term $c_{\Delta+\lambda}$ decreases quadratically in $\lambda$. We conclude that on a bounded domain in $N_\R$ only finitely many $\lambda \in \Lambda$ contribute to the supremum in \eqref{resulting PL convex function} and hence $f_{(c,m)}$ is a piecewise linear function. By construction, the slopes and the constant terms of $f_{(c,m)}$ transform as in \eqref{cocycle rule and slopes} and in \eqref{cocycle rule and constant terms}, respectively. By Lemma \ref{periodicity of piecewise linear functions}, the function $f_{(c,m)}$ satisfies the cocycle rule. The last claim also follows from Lemma \ref{periodicity of piecewise linear functions}.
\end{proof}

\begin{rem} \label{only finitely many contributions}
	For a bounded domain $\Omega$ of $N_\R$ and any constant $R \in \R$, the above argument shows that 
	$$\langle m_{\Delta'}, \omega \rangle +c_{\Delta'}\leq f_{(m,c)}(\omega)-R$$
	for all $\omega \in \Omega$ and for all but finitely many $\Delta' \in \Ccal_n$. Indeed, we have $\Delta'=\Delta + \lambda$ with $\Delta \in \Ncal$ and $\lambda \in \Lambda$ by Lemma \ref{polytopes and representatives}. The argument in Lemma \ref{approximation function} shows that the slopes increase at most linearly in $\lambda$ while the constant terms decrease quadratically in $\lambda$. This proves the claim.
\end{rem}

Let $\PL(N_\R,\Lambda,z)$ be the space of piecewise linear functions on $N_\R$ satisfying the cocycle rule with respect to the given cocycle $z=(z_\lambda)_{\lambda \in \Lambda}$. For $f,g \in \PL(N_\R,\Lambda,z)$, the cocycle rule yields that $f-g$ is $\Lambda$-periodic and hence their distance
$$d(f,g) \coloneqq \sup_{\omega \in N_\R} |f(\omega)-g(\omega)|$$
is a well-defined real number defining a metric on $\PL(N_\R,\Lambda,z)$. On the finite dimensional $\R$-vector space $(M_\R \times \R)^\Ncal$, we will use any norm. 

\begin{lem} \label{continuous functional}
	With respect to the above metrics und using Lemma \ref{approximation function}, we get a uniformly continuous map 
	$$(M_\R \times \R)^\Ncal \longrightarrow \PL(N_\R,\Lambda,z), \quad (m,c)\longmapsto f_{(m,c)}.$$
\end{lem}

\begin{proof} 
	As all norms on a finite dimensional $\R$-vector space are equivalent, we may assume that the distance on $(M_\R \times \R)^\Ncal$ is the max-norm induced by the same norm $\metr$ on each factor $M_\R$ and by the standard norm on $\R$. 
	Let $(m,c)$ and $(m',c)$ be elements of $(M_\R \times \R)^\Ncal$ with distance $\delta$. By \eqref{periodic extensions of slopes and constant terms}, we deduce that 
	\begin{equation} \label{distance and translates}
	\|(m_{\Delta+\lambda}',c_{\Delta+\lambda}')-(m_{\Delta+\lambda},c_{\Delta+\lambda})\|=
	\|(m_{\Delta}',c_{\Delta}')-(m_{\Delta},c_{\Delta})\|
	\leq \delta
	\end{equation} 
	for all $\Delta \in \Ncal$ and all $\lambda \in \Lambda$. Using \eqref{resulting PL convex function}, this easily proves uniform continuity.
\end{proof}

\begin{art} \label{center and ball}
Let $f$ be a convex piecewise linear function satisfying the cocycle rule with respect to the given cocycle $z=(z_\lambda)_{\lambda \in \Lambda}$. Using Lemma \ref{convex piecewise linear functions}, we see that $f$ is strictly convex with respect to the polytopal decomposition $\Ccal \coloneqq \Dcal(f)$. Let $\Ncal$ be again a system of representatives for $\Ccal_n$ with respect to the $\Lambda$-action. Since $f$ is a convex piecewise linear function, it is the maximum of the affine pieces obtained from $\Ccal_n$ which shows that  there is $(m,c)\in (M_\R \times \R)^\Ncal$ such that $f=f_{(m,c)}$ as defined in \eqref{resulting PL convex function}.
We use a fixed norm $\metr$ on $N_\R$. For $\delta > 0$ and $\Delta \in \Ccal_n$, we define the \emph{$\delta$-center} of $\Delta$ as 
$$C(\Delta,\delta) \coloneqq \{\omega \in \Delta \mid \text{$\|\omega - \omega'\|\geq \delta$ for all $\omega' \in N_\R \setminus \Delta$}\}.$$
It is clear that $C(\Delta,\delta)$ is a polytope contained in $\Delta$. 
Moreover,  we define $B(\Delta,\delta)$ as the set of points in $N_\R$ with distance $< \delta$ to $\Delta$. 
\end{art}



The next results describes the change of $f=f_{(m,c)}$ and the underlying polytopal complex $\Ccal=\Dcal(f)$ if we replace $(m,c)$ by sufficiently good approximations  in $(M_\R \times \R)^\Ncal$.

\begin{lem} \label{periodic decomposition for approximations}
	Let $f=f_{(m,c)}$ be a function and $\Ccal=\Dcal(f)$ a polytopal decomposition as in \ref{center and ball}. Then there is a $\delta>0$ and a neighbourhood $U$ of $(m,c)$ in $(M_\R \times \R)^\Ncal$ such that the following properties hold for any $(m',c') \in U$ and $\Ccal'\coloneqq\Dcal(f_{(m',c')})$:
	\begin{enumerate}
		\item \label{distance and maximality}
		If $f_{(m',c')}(\omega)= \langle m_\Delta',\omega \rangle + c_\Delta'$ for some $\omega \in N_\R$ and $\Delta \in \Ccal_n$, then $\omega \in B(\Delta,\delta)$.
        \item \label{non-empty centers}
        The $\delta$-centers $C(\Delta,\delta)$ are  $n$-dimensional polytopes for any $\Delta \in \Ccal_n$.
        \item \label{domains of linearity and center}
        For any $\Delta \in \Ccal_n$, there is a unique $n$-dimensional face of $\Ccal'$ containing $C(\Delta,\delta)$.
        \item \label{presentation of f}
        Using the notation from \ref{domains of linearity and center}, we have $f_{(m',c')}=m_\Delta'+c_\Delta'$ on $\Delta'$ and the map $\Delta \mapsto \Delta'$ is a  canonical bijection from $\Ccal_n$ onto $\Ccal_n'$.
		\item \label{periodicity}
		If $\Ccal$ is a $\Lambda$-periodic polytopal decomposition of $N_\R$ (see Definition \ref{periodic decompositions}), then $\Ccal'$ is also a  $\Lambda$-periodic polytopal decomposition of $N_\R$.
	\end{enumerate}
\end{lem}


\begin{proof}
    Let us pick $\nu\in \Ccal_n$. Strict convexity of $f$ and Remark \ref{only finitely many contributions} applied to $\Omega=\nu$ yield that there is $r>0$ such that 
	\begin{equation} \label{lower bound and strict convexity}
    \langle m_\nu, \omega \rangle + c_\nu =f(\omega)>\langle m_\Delta, \omega \rangle+c_\Delta+2r
    \end{equation}
	for all $\Delta \in \Ccal_n \setminus \{\nu\}$ and all $\omega \in \nu$ with distance $\geq \delta$ to $\Delta$. 
	We may choose $r$ so small that the inequality holds for all $\nu \in \Ccal_n$ simultaneously. Indeed, this is clear for the representatives $\nu$ in the finite set $\Ncal$ and then by \eqref{cocycle rule and slopes} and \eqref{cocycle rule and constant terms} for all $\Lambda$-translates. Using \eqref{lower bound and strict convexity} and \eqref{distance and translates}, we conclude for all  $(m',c')$ in a sufficiently small neighbourhood $U$ of $(m,c)$ in $(M_\R \times \R)^\Ncal$ that for any $\Delta \in \Ccal_n$, we have
	\begin{equation} \label{center}
	\langle m_\nu', \omega \rangle + c_\nu' =f_{(m',c')}(\omega)>\langle m_\Delta', \omega \rangle+c_\Delta'+r
	\end{equation}
	for all  $\Delta \in \Ccal_n \setminus \{\nu\}$ and all $\omega \in \nu$ with distance $\geq \delta$ to $\Delta$.

    Now suppose that $\omega \in N_\R$ is as in \ref{distance and maximality}. This means that $\langle m_\Delta', \omega \rangle+c_\Delta'$ is maximal among all $\langle m_\nu', \omega \rangle+c_\nu'$ with $\nu$ ranging over $\Ccal_n$. There is $\nu \in \Ccal_n$ with $\omega \in \nu$. It follows from \eqref{center} that $\omega \in B(\Delta,\delta)$ proving \ref{distance and maximality}.

	For a fixed $\Delta \in \Ccal_n$, it is clear that the $\delta$-center $C(\Delta,\delta)$ is an $n$-dimensional polytope contained in $\Delta$ if we choose $\delta>0$ sufficiently small. Obviously, we can choose such a $\delta$ which works for the finitely many $\Delta \in \Ncal$. Since $\Ncal$ is a system of representatives for $\Ccal_n$ with respect to the $\Lambda$-translation, this number $\delta$ works for all $\Delta \in \Ccal_n$ proving \ref{non-empty centers}.
	
	We still consider $(m',c')\in U$. For $\Delta \in \Ccal_n$, let $\Delta'$ be the locus where  $f_{(m',c')}=m_\Delta'+c_\Delta'$. It follows from \ref{distance and maximality} that $C(\Delta,\delta)\subset \Delta'$. Since $C(\Delta,\delta)$ is an $n$-dimensional polytope, it is clear that $\Delta'$ is an $n$-dimensional face of $\Ccal'=\Dcal(f_{(m',c')})$ proving \ref{domains of linearity and center} and the first claim in \ref{presentation of f}. The definition of $f_{(m',c')}$ based on \eqref{resulting PL convex function} shows that $f_{(m',c')}$ is given on any maximal domain of linearity by $f_{(m',c')}=m_\Delta'+c_\Delta'$ for some $\Delta \in \Ccal_n$. This proves surjectivity of the map in \ref{presentation of f} and injectivity follows from \eqref{center}.
	
	Now we assume that $\Ccal=\Dcal(f)$ is a $\Lambda$-periodic polytopal decomposition of $N_\R$. To prove \ref{periodicity}, we may choose $\delta>0$ so small that
	\begin{equation} \label{disjointness of domain balls} 
	B(\Delta,\delta) \cap (B(\Delta,\delta)+\lambda) =\emptyset
	\end{equation} 
	for all non-zero $\lambda \in \Lambda$ and all $\Delta \in \Ccal_n$. This is possible for a single $\Delta \in \Ccal_n$ by using  $\Delta \cap (\Delta+\lambda)=\emptyset$ based on the definition of $\Lambda$-periodicity, then obviously also for the finite set of representatives $\Ncal$ for $\Ccal_n$ with respect to the $\Lambda$-translation and hence for all $\Delta \in \Ccal_n$ by the usual translation argument. Let $\Delta' \in \Ccal_n'$. By definition of $\Ccal'=\Dcal(f_{(m',c')})$, there is a $\Delta \in \Ccal_n$ such that $f_{(m',c')}= m_\Delta' + c_\Delta'$ on $\Delta'$. As seen in \ref{distance and maximality}, we have $\Delta' \subset B(\Delta,\delta)$ and hence \eqref{disjointness of domain balls} proves $\Delta' \cap (\Delta'+\lambda)=\emptyset$ showing \ref{periodicity}.
\end{proof}

Let $\Sigma$ be a finite set of polytopes in $N_\R$.
To deal with $\Sigma$-transversality, we look at the following conditions for a polytopal decomposition $\Ccal$ of $N_\R$ assuming again that $\Lambda$ acts by translation on $\Ccal$. We fix a system of representatives $\Ncal$ for $\Ccal_n$ with respect to this $\Lambda$-action.

\begin{art} \label{transversality conditions}
	Let $\sigma \in \Sigma$ and let $\Delta_0,\dots,\Delta_p$ be pairwise different polytopes in $\Ncal$. 
	We pick linearly independent $m_i \in M_\R$ for $i$ in a finite set $I_\sigma$ and $c_i \in \R$ such that $\A_\sigma$ is given by the intersection of the affine hyperplanes 
	\begin{equation} \label{hyperplane for sigma}
	\langle m_i ,\cdot \rangle = c_i
	\end{equation}
	in $N_\R$ with $i$ ranging over $I_\sigma$.

	For  $(m_\Delta,c_\Delta)_{\Delta \in \Ncal}$, we impose the following two conditions:
	\begin{enumerate} 
		\item \label{first imposed condition}
		If $\#(I_\sigma)+p \leq n$, we require that the vectors
		\begin{equation*} 
		(m_{\Delta_j}-m_{\Delta_0})_{j=1,\dots,p}, \, (m_i)_{i \in I_\sigma}
		\end{equation*}
		are linearly independent in $M_\R$. 
		
		\item \label{second imposed condition}
		If $\#(I_\sigma)+p = n+1$, we require that the system of $(n+1)$-inhomomogeneous equations 
		\begin{align*} 
		\langle m_{\Delta_j}-m_{\Delta_0}, \omega \rangle &= c_{\Delta_j}-c_{\Delta_0} \quad (j=1,\dots,p)\\
		\langle m_i , \omega \rangle &= c_i \quad (i\in I_\sigma) \label{second part of system of equations}
		\end{align*}
		has no solution in the $n$-dimensional variable $\omega \in N_\R$. 	
	\end{enumerate}
\end{art}

\begin{lem} \label{hypersurface argument}
	There is an algebraic hypersurface $H$ in $(M_\R \times \R)^\Ncal$ such that for all  $(m_\Delta,c_\Delta)_{\Delta \in \Ncal}$ in the complement of $H$ the above conditions \ref{first imposed condition} and \ref{second imposed condition} hold for all $\sigma \in \Sigma$ and all pairwise different $\Delta_0,\dots,\Delta_p \in \Ncal$ simultaneously. 	
\end{lem}

\begin{proof}
	We pick $\sigma \in \Sigma$ and pairwise different polytopes $\Delta_0, \dots, \Delta_p \in \Ncal$. Using coordinates with respect to a basis in $M$,  condition \ref{first imposed condition} is equivalent to the non-vanishing of at least one maximal subdeterminant of the matrix formed by the displayed vectors and hence becomes true on the complement of an algebraic hypersurface in $M_\R^\Ncal$. 
	Similarly, condition \ref{second imposed condition} is equivalent to the non-vanishing of the determinant of the extended $(n+1)\times(n+1)$-matrix of the system of inhomogeneous equations. We conclude 
	that the set of  $(m_\Delta,c_\Delta)_{\Delta \in \Ncal}$ fulfilling both conditions is the complement of an algebraic hypersurface in $(M_\R \times \R)^\Ncal$.
	
	Using that   $\Sigma$ and $\Ncal$ are finite sets, we conclude that there is an algebraic hypersurface $H$ in $(M_\R \times \R)^\Ncal$ such that for all  $(m_\Delta,c_\Delta)_{\Delta \in \Ncal}$ in the complement of $H$ the conditions \ref{first imposed condition} and \ref{second imposed condition} hold for all $\sigma \in \Sigma$ and all pairwise different $\Delta_0,\dots,\Delta_p \in \Ncal$ simultaneously. 
\end{proof}

Let $\Sigma$ be a finite set  of  polytopes in $N_\R$ and let $H$ be the hypersurface in $(M_\R \times \R)^\Ncal$   from Lemma \ref{hypersurface argument}.

\begin{lem} \label{transverse intersection}
	Let $f=f_{(m,c)}$ for a strictly convex piecewise linear function with respect to $\Ccal$ satisfying the cocycle rule. Then there is an open neighbourhood $U$ of $(m,c)\in (M_\R \times \R)^\Ncal$ such that for any $(m',c') \in U \setminus H$, the maximal domains of linearity form a polytopal decomposition $\Dcal(f_{(m',c')})$ of $N_\R$ which is $\Sigma$-transversal.
\end{lem}

\begin{proof}
Let us choose $\delta>0$ and the open neighbourhood $U$ of $(m,c)$ as in Lemma \ref{periodic decomposition for approximations}. It is enough to show that for $(m',c') \in U \setminus H$, the polytopal decomposition $\Ccal' \coloneqq \Dcal(f_{(m',c')})$ of $N_\R$ is $\Sigma$-transversal. Since $(m',c') \not \in H$, the conditions \ref{first imposed condition} and \ref{second imposed condition} from \ref{transversality conditions} are satisfied. 
We want to use the criterion from Lemma \ref{transversality lemma}. Let $\sigma \in \Sigma$ and 
$\Delta' \in \Ccal'$. 
For $\A_\sigma$, we use the description in \eqref{hyperplane for sigma} and hence  
\begin{equation} \label{dimension Asigma} 
\dim(\sigma)  = \dim(\A_\sigma)=n-\#(I_\sigma).
\end{equation}
On the other hand, we have
$\Delta' = \Delta_0' \cap \dots \cap \Delta_p'$ for some pairwise different $\Delta_0', \dots, \Delta_p' \in \Ccal_n'$. By the bijective correspondence between $\Ccal_n$ and $\Ccal_n'$ from Lemma \ref{periodic decomposition for approximations}, we know that for every $j=0,\dots,p$, there is a unique $\Delta_j \in \Ccal_n$ such that the $\delta$-center $C(\Delta_j,\delta)$ of $\Delta_j$ contains $\Delta_j'$.
By definition, we have $\Ccal'=\Dcal(f_{(m',c')})$ and hence Lemma \ref{periodic decomposition for approximations}\ref{presentation of f} shows
$$\Delta_j' =\{\omega \in N_\R \mid \langle m_{\Delta_j}', \omega \rangle + c_{\Delta_j}'=f_{(m',c')}(\omega)\}$$ 
We conclude that $\A_{\Delta'}$ is the set of solutions of the $p$ inhomogeneous linear equations
$$\langle m_{\Delta_j}'-m_{\Delta_0}', \omega \rangle = c_{\Delta_0}'-c_{\Delta_p}' \quad (j=1,\dots,p)$$
in $\omega \in N_\R$. 
Using Lemma \ref{transversality conditions}\ref{first imposed condition} and the underlying linear space $\L_{\Delta'}$ of $\A_{\Delta'}$, we have
\begin{equation} \label{consequences from first condition}
\dim(\Delta')=\dim(\L_{\Delta'})=n-p. 
\end{equation}
 We conclude that
\begin{equation} \label{formula for expected dimension}
D(\sigma,\Delta')= \dim(\sigma)+ \dim(\Delta')-n=n-p-\#(I_\sigma).
\end{equation}
We assume first $D(\sigma,\Delta')\geq 0$. Then Lemma \ref{transversality conditions}\ref{first imposed condition} yields that 
$$\dim(\L_\sigma \cap \L_{\Delta'})= n-p-\#(I_\sigma).$$
It follows from Remark \ref{expected dimension} that  condition \ref{first condition for transversality} in Lemma \ref{transversality lemma} is fulfilled. 

Now we assume that $D(\sigma,\Delta')< 0$. Recall that $\A_\sigma \cap \A_{\Delta'}$ is the set of solutions of the  $p+\#(I_\sigma)$ inhomogeneous linear equations in \ref{transversality conditions}\ref{second imposed condition}. Using \eqref{formula for expected dimension}, the number of equations is $>n$ and hence \ref{transversality conditions}\ref{second imposed condition} yields that $\A_\sigma \cap \A_{\Delta'} = \emptyset$. It follows that condition  \ref{second condition for transversality} in Lemma \ref{transversality lemma} is also fulfilled and hence this lemma proves $\Sigma$-transversality of $\Ccal'$.
\end{proof}

\begin{proof}[Proof of Proposition \ref{transversal pl approximation}]
By 	\cite[Proposition 8.2.6]{burgos-gubler-jell-kuennemann2}, the function $f$ is a uniform limit of piecewise $(\Q,\Gamma)$-linear functions satisfying the cocycle rule. So we may assume that the function $f$ is piecewise linear, but we do not require any rationality for $f$ at the moment. Let $\Ccal \coloneqq \Dcal(f)$ be the locally finite polytopal decomposition of $N_\R$ given by the maximal domains of linearity for $f$ as in Lemma \ref{convex piecewise linear functions}\ref{maximal domains form decomposition}. 
\walter{We have seen in Lemma \ref{convex piecewise linear functions}\ref{almost periodicity}  that $\Ccal$ is almost $\Lambda$-periodic and so
we replace $\Ccal$ by the barycentric subdivision which is a locally finite $\Lambda$-periodic simplex decomposition of $N_\R$ in the sense of Definition \ref{periodic decompositions}.}  We decrease the value of the piecewise linear function $f$ slightly in the barycenters of the faces. Using an inductive procedure starting with the barycenters of the $n$-dimensional faces and a small enough change, the resulting \walter{piecewise linear function} is strictly convex with respect to this new subdivision $\Ccal$. Of course, we have to take care during this procedure that this strictly convex $f$ still satisfies the cocycle rule which can be easily done using Lemma \ref{periodicity of piecewise linear functions}.

We have seen in Lemma \ref{convex piecewise linear functions}\ref{translation of maximal domains}   that $\Lambda$ acts on $\Ccal$ by translation. 
We fix a system of representatives $\Ncal$ of $\Ccal_n$ with respect to this $\Lambda$-action. For $\Delta \in \Ccal$, let $m_\Delta \in M_\R$ be the slope of $f|_\Delta$ and let $c_\Delta \in \R$ be the constant term. We set $(m,c)\coloneqq (m_\Delta,c_\Delta)_{\Delta \in \Ncal} \in (M_\R \times \R)^\Ncal$.
Using that $f$ is a convex piecewise linear function, it is clear from \ref{periodicity of piecewise linear functions}
 that 
$$f = f_{(m,c)} = \sup \{ m_{\Delta+\lambda} + c_{\Delta + \lambda} \mid \Delta \in \Ncal, \, \lambda \in \Lambda\}.$$

We fix $\varepsilon$ with $0<\varepsilon <1$. Our goal is to define a function  $f'\colon N_\R \to \R$ with $|f-f'| < \varepsilon$ and the desired properties.
Our strategy is to define the approximation $f'$ by picking an approximation $(m',c')$ of $(m,c)$ in $(M_\R \times \Gamma)^\Ncal$ and then setting $f' \coloneqq f_{(m',c')}$. By Lemmas \ref{convex piecewise linear functions} and \ref{approximation function}, such an $f'$ is a strictly convex piecewise linear function with respect to the polytopal decomposition $\Ccal' \coloneqq \Dcal(f')$. Moreover, we have seen there that $f'$ satisfies the cocycle rule with respect to the given cocycle $z$. 
In the following, we will deduce the desired properties for $f'$  assuming that the approximations $(m',c')$ of $(m,c)$ are sufficiently good. This will always mean that the imposed conditions hold in a sufficiently small neighbourhood of $(m,c)$ in the space of all approximations $(M_\R \times \Gamma)^\Ncal$. More precisely, we choose $\delta>0$ and the open neighbourhood $U$ of $(m,c)$ in $(M_\R \times \R)^\Ncal$ as in Lemma \ref{periodic decomposition for approximations}. In particular, Lemma \ref{periodic decomposition for approximations}\ref{periodicity} shows that the polytopal decomposition $\Ccal'=\Dcal(f')$ is $\Lambda$-periodic as $\Ccal=\Dcal(f)$ was assumed to be $\Lambda$-periodic.

Choosing $U$ sufficently small, the continuity in Lemma \ref{continuous functional} ensures that we have 
$$|f - f'|< \varepsilon$$
for all $(m',c') \in U$. 
We may assume that this $U$ works also for Lemma \ref{transverse intersection} and we denote the algebraic hypersurface from there again by $H$. We conclude from this result that $\Ccal'=\Dcal(f')$ is $\Sigma$-transversal for any $(m',c')\in U \setminus H$. Note that by dimensionality, the open set $U \setminus H$ is non-empty. 
By density of $M_\Q$ in $M$ and by density of $\Gamma$ in $\R$, there is $(m',c') \in (M_\Q\times \Gamma)^\Ncal \cap U$.   By Lemma \ref{approximation function}, the function $f'$ is piecewise $(\Q,\Gamma)$-linear, which in turn implies that $\Ccal'$ is a locally finite $(\Z,\Gamma)$-polytopal decomposition using Lemma \ref{convex piecewise linear functions}\ref{rationality of maximal domains}. Since $|f-f'|<\varepsilon$, this proves the proposition.
\end{proof}

\begin{rem} \label{finitely generated sigma and transversal}
We note that Proposition \ref{transversal pl approximation} also holds  for an infinite set $\Sigma$  which includes with a polytope all its faces if there is a finite set of polytopes $\Sigma'$ such that $\Sigma \subset \Sigma'+\Lambda$.  To see this we may assume that if $\Sigma'$ contains a polytope $\sigma$, then it includes all the faces of $\sigma$. Now we can apply Proposition \ref{transversal pl approximation} to the finite set $\Sigma'$ and finally we note that $\Sigma'$-transversality of the locally finite polytopal decomposition $\Ccal_k$ is equivalent to $\Sigma'+\Lambda$ transversality using that $\Ccal_k$ is $\Lambda$-translation invariant as well.

In \cite[8.1]{gubler-2007a}, the notion of \emph{$\Sigma$-generic polytopal decompositions} has been defined and it was shown that such decompositions are $\Sigma$-transversal. It is proven in the second \walter{author's thesis \cite{stadloeder}} that we can more generally assume for the approximations $f_k$ in Proposition \ref{transversal pl approximation}  that   $\frac{1}{m}\Ccal_k$ are $\Sigma$-generic polytopal decompositions for all non-zero $m \in \Z$. \walter{This will not be used in this paper.}
\end{rem}

\section{Toric metrics} \label{section tormet}

Let $K$ be an algebraically closed non-archimedean field with additive value group $\Gamma$. We consider an abelian variety $A$ over $K$. Recall from \cite[Theorem 9.5.4]{bombieri_gubler06:heights_diophant_geometry} that a rigidified line bundle $L$ on $A$ has a canonical metric $\metr_L$. If $L$ is ample, then $\metr_L$ is a continuous semipositive metric of $\Lan$ \cite[2.10]{gubler-2007b}.  

\begin{art} \label{Raynaud's uniformization theory}
	We first recall Raynaud's uniformization theory \walter{based on Raynaud's program announced in \cite{raynaud70} and worked out by Bosch and L\"utkebohmert \cite{bosch-luetkebohmert1991}, see also \cite[\S 6.5]{berkovich-book} for the formulations in the language of Berkovich spaces.} There is a unique compact subgroup $A_0$ of $A^\an$, which is an analytic subdomain, and the generic fiber of a formal group scheme $\Afrak_0$ over $\kcirc$, whose special fiber is a semiabelian variety. There is a unique formal affine torus $\T_0$ over $\kcirc$, which is a closed formal subgroup of $\Afrak_0$, and we have an exact sequence
	$$0 \longrightarrow \T_0 \longrightarrow \Afrak_0  \stackrel{q_0}{\longrightarrow} \Bfrak \longrightarrow 0$$
	of formal group schemes over $\kcirc$ where $\Bfrak$ is a formal abelian scheme, i.e.~$\Bfrak$ has good reduction. Note that $\Bfrak$ is the formal completion of an abelian scheme $\Bcal$ over $\kcirc$ \walter{(see \cite[\S 7]{bosch-luetkebohmert1991} for the argument).}
	Let $M$ be the character lattice of $\T_0$ and hence $\T_0=\Spf(\kcirc\{M\})$. We denote by $T = \Spec(K[M])$ the associated torus over $K$, then pushout with respect to $\T_0^\an \to \Tan$ gives the \emph{Raynaud extension}
	$$0 \longrightarrow \Tan \longrightarrow E^\an \stackrel{q}{\longrightarrow} B^\an \longrightarrow 0,$$
	which is an exact sequence of abelian analytic groups over $K$. 
	Here, the analytification of the abelian variety $B$ is the generic fiber of $\Bfrak$. The exact sequence is algebraic, but  the canonical morphism $p\colon E^{\an}\to A^\an$ is only an analytic group morphism. The kernel $\Lambda$ of $q$ is a discrete subgroup of $E(K)$ and we write $A^\an = E^\an / \Lambda$ as an identification. 
	\end{art}

\begin{art}	 \label{canonical tropicalization}
	The Raynaud uniformization $E^\an$ of $A$ comes with a \emph{canonical tropicalization map}. 
	Using that $E^\an = (A_0 \times T^{\an})/\T_0^\an$ with respect to the embedding $\T_0^\an \to A_0 \times T^\an$ given by $t \to (t,t^{-1})$, we see that the classical tropicalization map $\trop \colon T^\an \to N_\R$ for the cocharacter lattice $N=\Hom(M,\Z)$ extends to a continuous proper map $\trop\colon E^\an \to N_\R$. It is a basic fact that $\trop$ maps $\Lambda$ isomorphically onto a lattice in $N_\R$.  By passing to the quotient, we get
	$$ \tropbar \colon A^\an \longrightarrow N_\R/\trop(\Lambda),$$
	called the \emph{canonical tropicalization map of $A$}. 
	\walter{Note that the target $N_\R/\trop(\Lambda)$ is homeomorphic to the $n$-fold power of the unit circle $\rm S^1$ and that $\tropbar$ might be seen as a canonical deformation retraction of $A^\an$ onto its canonical skeleton \cite[\S 6.5]{berkovich-book}.}
\end{art}

\begin{art} \label{line bundle on the uniformization}
A line bundle $F$ on $E^\an$ descends to $A^{\an}=E^\an/\Lambda$ if and only if $F$ admits a $\Lambda$-linearization over the action of $\Lambda$ on $E^\an$. Then we have $F=p^*(\Lan)$ for the line bundle $L^\an=F/\Lambda$ on $A^\an$. Using rigidified line bundles, it is shown in \cite[Proposition 6.5]{bosch-luetkebohmert1991} that there is a line bundle $H$ on $B$, unique up to tensoring with a line bundle $E_u$ of $B$ induced from $E$ by pushout with the character $u \in M$, such that $q^*(H^\an) \cong p^*(L^\an)$ as $\Lambda$-linearized cubical line bundles. Using that as an identification and the canonical metrics on $L$ and $H$,  we note that $p^*\metr_L/q^*\metr_H$ is a continuous function on $E^\an$ which factors through the canonical tropicalization and  hence there are functions $z_\lambda \colon N_\R \to \R$ for $\lambda \in \trop(\Lambda)$ with 
\begin{equation} \label{definition cocycle}
-\log(p^*\metr_L/q^*\metr_H)(\gamma \cdot x)=-\log(p^*\metr_L/q^*\metr_H)(x)+z_{\lambda}(\trop(x))
\end{equation}
for all $x \in E^\an$ and  $\gamma \in \Lambda$ with $\lambda = \trop(\gamma)$, see \cite[4.3]{gubler-compositio}. These functions are $\trop(\Lambda)$-cocycles 
in the sense that
\begin{equation} \label{cocycle rule'}
z_{\lambda+\nu}(\omega)=z_\lambda(\omega+\nu)+z_\nu(\omega)
\end{equation}
and there is a unique symmetric bilinear form $b$ on $N_\R$ such that
\begin{equation} \label{symmetric bilinear form'}
z_\lambda(\omega)=z_\lambda(0)+b(\lambda,\omega)
\end{equation}
for all $\lambda \in \trop(\Lambda)$ and $\omega \in N_\R$. 
It follows that $z_\lambda$ is a quadratic function with associated bilinear form $b$. Using the polarization induced by $L$, we have seen
\begin{equation} \label{linear rationality property'}
b(\lambda,\cdot)\in M=\Hom(N,\Z)
\end{equation}
in \cite[Remarks 7.1.2, 8.1.3]{burgos-gubler-jell-kuennemann1}. Since $\metr_H$ is a model metric, we deduce from \eqref{definition cocycle} 
\begin{equation} \label{quadratic rationality property'}
z_\lambda(0) \in \Gamma.
\end{equation}
The line bundle $L$ is ample if and only if $H$ is ample and $b$ is positive definite, see \cite[Theorem 6.13]{bosch-luetkebohmert1991}.	We conclude that the  assumptions in \ref{cocycle} are satisfied.
\end{art}

\begin{art} \label{corresponding function}
 Now we consider any line bundle $F$ on $E^\an$ with $F=q^*(H^\an)$ for a rigidified line bundle $H$ on $B$. Using the canonical metric $\metr_H$ of $H$, there is a bijective correspondence between continuous metrics $\metr$ on $F$ and continuous real functions on $E^\an$ given by 
 $$\metr \longmapsto - \log (\metr/q^*\metr_H).$$
\end{art}

\begin{definition} \label{toric metrics on uniformization}
A continuous metric $\metr$ on $F$ is called \emph{toric} if the corresponding function factors through the canonical tropicalization of $E$.
\end{definition}

\begin{rem} \label{bijective correspondence for toric metrics}
Using \ref{corresponding function}, we get a bijective correspondence between continuous toric metrics $\metr$ on $F$ and continuous functions $f\colon N_\R \to \R$ given by
$$f \longmapsto \metr_f \coloneqq e^{-f \circ \trop} \cdot q^*\metr_H.$$
\end{rem}

\begin{rem} \label{descend of toric metrics}
As in \ref{line bundle on the uniformization}, we assume that $F=q^*(H)=p^*(L^\an)$  leading to the cocycle $(z_\lambda)_{\lambda \in \Lambda}$. For a continuous toric metric $\metr_f$ on $F$, 
it is clear that $\metr_f=p^*\metr$ for a continuous metric $\metr$ of $L$ if and only if $f$ satisfies the \emph{cocycle rule}
\begin{equation} \label{cocycle rule}
f(\omega + \lambda)= f(\omega)+ z_\lambda(\omega)
\end{equation}
for all $\omega \in N_\R$ and $\lambda \in \trop(\Lambda)$. 
\end{rem}

In the following, we consider a rigidified line bundle $L$ on $A$ with canonical metric $\metr_L$. 

\begin{definition} \label{toric metrics on A}
We call a continuous metric $\metr$ on $\Lan$  \emph{toric} if the function $-\log(\metr/\metr_L)$ is $A_0$-invariant. 
\end{definition}

\begin{prop} \label{bijective correspondence for toric metrics on A}
Let $F=p^*(\Lan)=q^*(H^\an)$ with cocycle $(z_\lambda)$ as in \ref{line bundle on the uniformization}. Then there is a bijective correspondence between continuous toric metrics $\metr$ on $\Lan$ and continuous functions $f$ on $N_\R$ satisfying the cocycle rule  \eqref{cocycle rule}, where the function $f_\metr$ associated to $\metr$ is characterized by  
$$p^*\metr=e^{-f_\metr \circ \trop} \cdot q^*\metr_H.$$
\end{prop}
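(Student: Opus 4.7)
My plan is to combine the two bijective correspondences already established with the topological content of $A_0$-invariance. Remark \ref{bijective correspondence for toric metrics} matches continuous toric metrics on $F = p^*L^\an = q^*H^\an$ with continuous functions $f \colon N_\R \to \R$ via $\metr_f = e^{-f\circ\trop}\cdot q^*\metr_H$, and Remark \ref{descend of toric metrics} singles out those metrics that descend to $L^\an$ as precisely the ones for which $f$ obeys the cocycle rule \eqref{cocycle rule}. All that remains is to match this descent condition with the $A_0$-invariance condition of Definition \ref{toric metrics on A}.

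The key structural step will be to observe that $\ker(\tropbar) = A_0$. Indeed $\trop\colon E^\an \to N_\R$ is a continuous surjective group homomorphism by \ref{canonical tropicalization}, $\trop|_\Lambda$ is a bijection onto $\trop(\Lambda)$, and $\trop^{-1}(0) \subset E^\an$ is carried bijectively onto $A_0 \subset A^\an$ by $p$; hence $\tropbar\colon A^\an \to N_\R/\trop(\Lambda)$ is a continuous group homomorphism with kernel $A_0$, and since $A^\an$ is compact it is a closed quotient map. It follows that a continuous function on $A^\an$ is $A_0$-invariant if and only if it factors continuously through $\tropbar$.

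For the two directions I would then argue as follows, recalling from \eqref{definition cocycle} that $-\log(p^*\metr_L/q^*\metr_H) = \psi\circ\trop$ for a continuous $\psi\colon N_\R \to \R$ satisfying $\psi(\omega+\lambda) - \psi(\omega) = z_\lambda(\omega)$. Given a continuous toric metric $\metr$ on $L^\an$, the function $-\log(\metr/\metr_L)$ is $A_0$-invariant and so factors as $\tilde h \circ \tropbar$ for some continuous $\tilde h$ on $N_\R/\trop(\Lambda)$; pulling back by $p$ and combining with $\psi\circ\trop$ produces $p^*\metr = e^{-f_\metr \circ \trop}\cdot q^*\metr_H$ with $f_\metr \coloneqq \tilde h \circ \bar\pi + \psi$, where $\bar\pi\colon N_\R \to N_\R/\trop(\Lambda)$ is the projection; the cocycle rule for $f_\metr$ is inherited from $\psi$ using the $\trop(\Lambda)$-invariance of $\tilde h \circ \bar\pi$. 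Conversely, starting with a continuous $f$ satisfying the cocycle rule, the metric $e^{-f\circ\trop}\cdot q^*\metr_H$ on $F$ is $\Lambda$-invariant by Remark \ref{descend of toric metrics} and descends to a continuous metric $\metr$ on $L^\an$; since $f - \psi$ is $\trop(\Lambda)$-periodic by the common cocycle rule, the function $-\log(p^*\metr/p^*\metr_L) = (f - \psi)\circ\trop$ factors through $\tropbar \circ p$, and surjectivity of $p$ together with the key observation force $\metr$ to be toric. The two assignments are mutually inverse by the characterizing formula, using surjectivity of $p$ and of $\trop$ for uniqueness.

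The main point to watch is the identification $\ker(\tropbar) = A_0$, which is exactly what bridges the analytic toricness condition on $A^\an$ with the combinatorial factorization-through-$\trop$ condition on the uniformization. Once that is in place, everything else is routine manipulation of cocycles and pullbacks.
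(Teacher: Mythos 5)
Your proof is correct and follows the same essential route as the paper, which simply asserts that the proposition "follows from Remark \ref{descend of toric metrics}." You correctly identify, and supply the argument for, the bridge that the paper's one-liner leaves implicit: the identification $\ker(\tropbar) = A_0$ (together with the quotient-map property of $\tropbar$, obtained from compactness of $A^\an$), which translates $A_0$-invariance of $-\log(\metr/\metr_L)$ on $A^\an$ in Definition \ref{toric metrics on A} into $-\log(p^*\metr/q^*\metr_H)$ factoring through $\trop$ on $E^\an$ as in Definition \ref{toric metrics on uniformization}. Once that equivalence is in place, your combination of Remarks \ref{bijective correspondence for toric metrics} and \ref{descend of toric metrics} and the manipulation of the cocycle for $\psi$ from \eqref{definition cocycle} is exactly what is needed, so your write-up amounts to a fuller version of the paper's proof rather than a different one.
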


\begin{proof}
This follows from Remark \ref{descend of toric metrics}.
\end{proof}

\begin{thm} \label{semipositive toric metrics}
Under the assumptions in Proposition \ref{bijective correspondence for toric metrics on A}, assume that $L$ is ample. Then the function  $f_\metr$ is convex if and only if the continuous metric $\metr$ on $\Lan$ is semipositive.	
\end{thm}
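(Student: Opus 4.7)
The plan is to prove the two implications separately.

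For the direction ``$f := f_\metr$ convex $\Rightarrow \metr$ semipositive'' (which is asserted without any ampleness hypothesis), I would apply Proposition \ref{transversal pl approximation} with the trivial choice $\Sigma = \emptyset$. This produces a sequence $f_k$ of strictly convex piecewise $(\Q,\Gamma)$-linear functions on $N_\R$, each satisfying the same cocycle rule as $f$, and converging uniformly to $f$. (Note that $f_k-f$ is $\trop(\Lambda)$-periodic, so uniform convergence on a fundamental domain is equivalent to global uniform convergence.) Under the correspondence of Proposition \ref{bijective correspondence for toric metrics on A}, each $f_k$ defines a toric metric $\metr_k$ on $\Lan$, and uniform convergence of the functions translates into uniform convergence $\metr_k \to \metr$ of the metrics. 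The remaining ingredient, to be supplied by the theory of formal Mumford models of $A$ recalled in Section \ref{section tormet}, is that a strictly convex piecewise $(\Q,\Gamma)$-linear function satisfying the cocycle rule is precisely the combinatorial datum of a Mumford model of $A$ equipped with a nef formal model of some positive power of $L$ inducing the metric $\metr_k$. Thus $\metr$ is a uniform limit of nef model metrics, i.e.\ semipositive by definition.

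For the converse, assume $L$ is ample and $\metr$ is semipositive. I first pull back along the Raynaud uniformization: since $p \colon E^\an \to A^\an$ is a local isomorphism of analytic spaces, $p^*\metr$ is a continuous semipositive metric on $F := p^*\Lan = q^*(H^\an)$, and $p^*\metr = e^{-f\circ\trop} \cdot q^*\metr_H$. By the discussion in \ref{line bundle on the uniformization}, ampleness of $L$ forces ampleness of $H$ and positive-definiteness of the bilinear form $b$ associated to the cocycle $(z_\lambda)_{\lambda \in \trop(\Lambda)}$; in particular $q^*\metr_H$ is semipositive with a smooth, positive first Chern form controlled by $b$.

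The main obstacle is the translation from non-archimedean semipositivity of $p^*\metr$ on $E^\an$ to pointwise convexity of $f$ on $N_\R$. Here I would invoke the formalism of weakly smooth differential forms on Berkovich spaces recalled in Appendix \ref{section app diffforms}. In that setting the first Chern form of $p^*\metr$ decomposes as $q^*c_1(H^\an,\metr_H)$ plus a tropical contribution obtained by pulling back via $\trop$ the (tropical) $dd^c$ of $f$. On the fibers of $q$, which are analytic tori, the first summand vanishes, so semipositivity of the total form forces nonnegativity of the tropical Monge--Amp\`ere form of $f$ on $N_\R$; equivalently, the restriction of $p^*\metr$ to a fiber $q^{-1}(b) \cong T^\an$ is a classical toric metric on an analytic torus whose associated function is $f$, and semipositivity of a toric metric on an analytic torus is well known to be equivalent to convexity of the associated function. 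Either route yields the claimed convexity of $f$.
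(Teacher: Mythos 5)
Your forward direction is a legitimate alternative to the paper's one-line citation of a result from Burgos--Gubler--Jell--K\"unnemann: you essentially carry out the approximation argument that the paper itself packages into Proposition \ref{transversal approximation of semipositive toric metrics} two statements later, namely applying Proposition \ref{transversal pl approximation} to get strictly convex piecewise $\qgamma$-linear approximants $f_k$ and then reading each $f_k$ as a nef model metric on a Mumford model (Remark \ref{algebraicity of the Mumford model}). Be aware of one tacit assumption you inherit: Proposition \ref{transversal pl approximation} requires the positive definiteness of $b$ imposed in \ref{cocycle}, which by the discussion in \ref{line bundle on the uniformization} amounts to $L$ being ample; the paper's own proof also silently makes this assumption, so you are in good company, but the point should be flagged.

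The converse direction is where your proposal has a genuine gap and departs from the paper in a way that does not obviously work. You begin by claiming that $p^*\metr$ is a continuous semipositive metric on $E^\an$ because $p$ is a local isomorphism. This does not follow: semipositivity is defined globally as a uniform limit of nef \emph{model} metrics on a proper variety, whereas $E$ is a non-proper group scheme and $p\colon E^\an \to A^\an$ is a purely analytic (non-algebraic) morphism, so there is no mechanism for transporting nef formal models along $p$. What \emph{does} transport is positivity of the first Chern current, which is a local statement, and this is precisely the route the paper takes: by the cited theorem the current $c_1(L,\metr)$ on $A^\an$ is positive against pull-backs of Lagerberg forms via \emph{smooth} tropicalization maps. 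The central technical obstruction the paper's proof must confront, and which your sketch omits, is that the canonical tropicalization $\tropbar$ on $A^\an$ is only a \emph{harmonic} tropicalization, not a smooth one, so that $\tropbar^*\alpha$ for a Lagerberg form $\alpha$ on $N_\R/\trop(\Lambda)$ is only weakly smooth. The entire purpose of the Appendix (Proposition \ref{positive smooth forms and tropicalization}, Corollary \ref{strong positive if equivalent to positive}, Theorem \ref{strong first Chern current is positive}) is to upgrade positivity of $c_1(L,\metr)$ from smooth to weakly smooth test forms; you mention the Appendix but never explain this is what it buys. Your alternative of restricting to fibers $q^{-1}(b)\cong T^\an$ and invoking a ``well known'' equivalence between semipositivity of a toric metric on an analytic torus and convexity of the associated function is not a citable fact in the literature (again, $T^\an$ is not proper, so the meaning of ``semipositive'' there is exactly what is at issue), and is essentially a restatement of the implication you are trying to prove. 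To make the converse direction rigorous you would have to either reconstruct the harmonic-tropicalization argument of the paper or prove from scratch that the pushforward $p_*(\trop^*\alpha)$ of a smooth form on $E^\an$ is a valid (weakly smooth) test form for the first Chern current on $A^\an$, which again brings you back to the content of the Appendix.
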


\begin{proof}
	We first assume that  $f \coloneqq f_\metr$ is convex. Then semipositivity of $\metr$ is just a reformulation of \cite[Proposition 8.3.1]{burgos-gubler-jell-kuennemann2}.

Conversely, assume that $\metr$ is semipositive. Using currents and forms on Berkovich spaces introduced by Chambert-Loir and Ducros \cite{chambert-loir-ducros}, it is shown in \cite[Theorem 1.3]{gubler_rabinoff_jell:harmonic_trop} that the first Chern current $c_1(L,\metr)$ is positive. This means that the current evaluated at the pull-back of a compactly supported smooth positive Lagerberg form with respect to a smooth tropicalization map is non-negative. The canonical tropicalization map of $A$ is (locally) not necessarily a smooth tropicalization map \cite[\S 17]{gubler_rabinoff_jell:harmonic_trop}, but it is a harmonic tropicalization map, see \cite[Proposition 16.2]{gubler_rabinoff_jell:harmonic_trop}. We show in the appendix that the above fact also holds for pull-backs with respect to harmonic  tropicalization maps. We conclude that $d'd''[f]$ is a positive current on $N_\R$. By \cite[Proposition 2.5]{lagerberg-2012}, this is equivalent for $f$ to be convex.
\end{proof}

Next, we will see that if $f_\metr$ is a piecewise linear function, then $\metr$ is a model metric given by an explicit construction due to Mumford.

\begin{art} \label{Mumford models}
Let $\Ccal$ be a locally finite $(\Z,\Gamma)$-polytopal decomposition of $N_\R$ for the cocharacter lattice $N$ from the Raynaud extension. Then there is an \emph{associated Mumford model $\Ecal$ of $E$}. This is a scheme locally of finite type over $\kcirc$ with generic fiber $E$ and reduced special fiber $\Ecal_s$. In this context, it is often more convenient to work with formal $\kcirc$-models and we denote the formal completion of $\Ecal$ along the special fiber by $\Efrak$. We refer to \cite[\S 4]{gubler-compositio} and \cite[8.2.2]{burgos-gubler-jell-kuennemann2} for  the construction and the following properties.	

There is a bijective correspondence between the irreducible components $Y$ of $\Ecal_s=\Efrak_s$ and the vertices $\omega$ of $\Ccal$ given by the facts that the generic point of $Y$ has a unique preimage $\xi$ in $E^\an$ with respect to the reduction map and that $\trop(\xi)$ is a vertex $\omega$ of $\Ccal$. 

Let $\phi \colon N_\R \to \R$ be a piecewise $(\Z,\Gamma)$-linear function. Then $\phi$ determines a line bundle $\Ocal_\Efrak(\phi)$ on $\Efrak$ which is a formal model of $\Ocal_E$ determined by
\begin{equation}
-\log \|1\|_{\Ocal_{\Efrak}(\phi)}= \phi \circ \trop
\end{equation}
on $E^\an$ where $\metr_{\Ocal_{\Efrak}(\phi)}$ is the associated formal metric.	
\end{art}

\begin{art} \label{descend of Mumford models}
Let $L$ be a line bundle on $A$ with $F=p^*(\Lan)=q^*(H^\an)$ and cocycle $(z_\lambda)$ as in \ref{line bundle on the uniformization}. We assume that the locally finite $(\Z,\Gamma)$-polytopal decomposition 
$\Ccal$	is $\trop(\Lambda)$-periodic in the sense of \ref{periodic decompositions}. Then $\Afrak \coloneqq \Efrak/\Lambda$ is a formal $\kcirc$-model for $A$ called the \emph{formal Mumford model associated to $\overline \Ccal$}, where $\overline \Ccal$ is the polytopal decomposition $\Ccal/\trop(\Lambda)$ of $\tropbar(A^\an)=N_\R/\trop(\Lambda)$. 

 Recall \walter{from \cite[Theorem 6.7]{bosch-luetkebohmert1991}} that $H$ has a model $\Hcal$ on $\Bcal$, unique up to isomorphism. We denote by $\Hfrak$ the associated formal model on $\Bfrak$. The morphism $q\colon E \to B$ extends uniquely to a morphism $\Ecal \to \Bcal$ which we also denote by $q$. 
 For a $\trop(\Lambda)$-periodic function $\phi\colon N_\R \to \R$, let
\begin{equation} \label{H twist}
\Hfrak(f)  \coloneqq  q^*\Hfrak \otimes \Ocal_\Efrak(\phi), 
\end{equation}
where $f(\lambda) \coloneqq \phi(\lambda)+ z_\lambda(0)$ for $\lambda \in N_\R$. 
Note that we have defined $z_\lambda(0)$ only for $\lambda \in \trop(\Lambda)$, but we have seen in \ref{line bundle on the uniformization} that $z_\lambda(0)$ is a quadratic function in $\lambda$ and hence extends uniquely to a quadratic function on $N_\R$. 
The $\Lambda$-periodicity of $\phi$ is equivalent to the cocycle rule for $f$.  This yields that $\Lfrak(f) \coloneqq \Hfrak(f)/\Lambda$ is a line bundle on the formal Mumford model $\Afrak=\Efrak/\Lambda$ such that $\Lfrak(f)$ is a formal model of $L$. By Proposition \ref{toric metrics on A}, the formal metric $\metr_{\Lfrak(f)}$ is the toric metric of $L$ associated to $f$.
\end{art}

The following result will be crucial for computing toric Monge--Amp\`ere measures.

\begin{prop} \label{transversal approximation of semipositive toric metrics}
Let $\metr$ be a semipositive toric metric on an ample line bundle $L$ of $A$ and let $\Sigma$ be a finite set of polytopes in $N_\R$ including all its faces. Then $\metr$ is the uniform limit of semipositive model metrics $\metr_k$ with the following properties:
\begin{enumerate}
	\item For any $k$, there is a non-zero $m_k \in \N$ such that $\metr^{\otimes m_k}$ is the formal metric associated to a formal model $\Lfrak_k$ of $L^{\otimes m_k}$ on a formal Mumford model $\Afrak_k$ of $A$.
	\item The Mumford models $\Afrak_k$ are associated to locally finite $\trop(\Lambda)$-periodic $(\Z,\Gamma)$-polytopal decompositions $\Ccal_k$ of $N_\R$ which are $\Sigma$-transversal (see Definition \ref{transversal decomposition}).
	\item For any $k$, there is a piecewise $(\Z,\Gamma)$-linear strictly convex function $g_k$ with respect to $\Ccal_k$ satisfying the cocycle rule such that $\Lfrak_k = \Lfrak(g_k)$ by the construction in \ref{descend of Mumford models}.
\end{enumerate}
\end{prop}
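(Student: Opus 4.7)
The plan is to combine the convex-function description of semipositive toric metrics from Theorem~\ref{semipositive toric metrics} with the transversal piecewise linear approximation from Proposition~\ref{transversal pl approximation}, and to feed the resulting approximating functions into the Mumford model construction recalled in \ref{Mumford models}--\ref{descend of Mumford models}. The only nontrivial point is that Proposition~\ref{transversal pl approximation} delivers piecewise $(\Q,\Gamma)$-linear data while the Mumford construction needs piecewise $(\Z,\Gamma)$-linear data, so we will be forced to pass to suitable tensor powers $L^{\otimes m_k}$ of $L$ for the model-metric statement.

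First, I would write $\metr = \metr_f$ via Proposition~\ref{bijective correspondence for toric metrics on A}, so that $f = f_\metr\colon N_\R\to\R$ satisfies the cocycle rule \eqref{cocycle rule}. Since $L$ is ample and $\metr$ is semipositive, Theorem~\ref{semipositive toric metrics} guarantees that $f$ is convex. The cocycle $(z_\lambda)$ from \ref{line bundle on the uniformization} satisfies the hypotheses of \ref{cocycle}, with $b$ positive definite because $L$ is ample. Applying Proposition~\ref{transversal pl approximation} to $f$ and the given finite set $\Sigma$ then produces piecewise $(\Q,\Gamma)$-linear strictly convex functions $f_k \to f$ uniformly, each satisfying the cocycle rule, with locally finite $\trop(\Lambda)$-periodic $\Sigma$-transversal $(\Z,\Gamma)$-polytopal domain-of-linearity decompositions $\Ccal_k$. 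To clear denominators, I pick a finite set $\Ncal$ of $\trop(\Lambda)$-orbit representatives among the maximal cells of $\Ccal_k$ and choose $m_k \in \N\setminus\{0\}$ so that the slope of $m_k f_k$ on every $\Delta \in \Ncal$ lies in $M$. The slope transformation rule \eqref{cocycle rule and slopes} together with the integrality \eqref{linear rationality property'} then ensures that the same $m_k$ works for every cell of $\Ccal_k$, so $g_k \coloneqq m_k f_k$ is piecewise $(\Z,\Gamma)$-linear and strictly convex with respect to $\Ccal_k$. Because the cocycle attached to $L^{\otimes m_k}$ is simply $m_k z_\lambda$, the function $g_k$ satisfies the cocycle rule for $L^{\otimes m_k}$.

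The Mumford construction of \ref{descend of Mumford models} applied to $L^{\otimes m_k}$, $\Ccal_k$ and $g_k$ now yields a formal Mumford model $\Afrak_k$ of $A$ associated to $\overline{\Ccal_k}$ together with a formal line bundle $\Lfrak_k \coloneqq \Lfrak(g_k)$ on $\Afrak_k$ which is a model of $L^{\otimes m_k}$ whose associated formal metric equals the toric metric on $L^{\otimes m_k}$ corresponding to $g_k$. Let $\metr_k$ be the toric metric on $L$ corresponding to $f_k$ via Proposition~\ref{bijective correspondence for toric metrics on A}; by compatibility of the correspondence with tensor powers, $\metr_k^{\otimes m_k}$ coincides with $\metr_{\Lfrak_k}$, so properties (i)--(iii) hold by construction. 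Semipositivity of $\metr_k$ is immediate from Theorem~\ref{semipositive toric metrics} applied to the ample $L$ and the convex $f_k$. For uniform convergence, note that both $f$ and $f_k$ satisfy the same cocycle rule, so $f - f_k$ is $\trop(\Lambda)$-periodic and descends to the compact space $N_\R/\trop(\Lambda)$; hence $-\log(\metr/\metr_k) = (f - f_k)\circ\tropbar$ tends to zero uniformly on $A^\an$ as $f_k \to f$ uniformly. The main mildly subtle step is the uniform choice of $m_k$ over the infinitely many cells of $\Ccal_k$, which the slope identity \eqref{cocycle rule and slopes} together with \eqref{linear rationality property'} handles for us.
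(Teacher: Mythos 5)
Your proposal is correct and follows essentially the same route as the paper's proof: translate $\metr$ into the convex function $f_\metr$ via Proposition~\ref{bijective correspondence for toric metrics on A} and Theorem~\ref{semipositive toric metrics}, apply Proposition~\ref{transversal pl approximation} to get strictly convex piecewise $(\Q,\Gamma)$-linear approximants $f_k$ with $\Sigma$-transversal $\Lambda$-periodic decompositions $\Ccal_k$, clear denominators to an $m_k$ using \eqref{cocycle rule and slopes} and \eqref{linear rationality property'}, and feed $g_k = m_k f_k$ into the Mumford construction of \ref{descend of Mumford models}. The extra remarks you add (why one $m_k$ suffices for the infinitely many cells, and why uniform convergence of $f_k$ yields uniform convergence of metrics via periodicity of $f-f_k$) are correct and fill in details the paper leaves implicit.
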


\begin{proof}
We have seen in \ref{line bundle on the uniformization} that there is an ample line bundle $H$ with $p^*(\Lan)=q^*(H^\an)$ for an ample line bundle $H$ on $B$ leading to the cocycle $(z_\lambda)$. By Proposition \ref{toric metrics on A}, the toric metric $\metr$ corresponds to a continuous function
$f\colon N_\R \to \R$  satisfying the cocycle rule.  
By Theorem \ref{semipositive toric metrics}, the function $f$ is convex. We have seen in \ref{line bundle on the uniformization} that the cocycle $(z_\lambda)$ satisfies the assumptions required in \ref{cocycle} and hence we may apply Proposition \ref{transversal pl approximation}. We conclude that $f$ is the uniform limit of piecewise $(\Q,\Gamma)$-linear functions $f_k$ satisfying the cocycle rule. Moreover, we may assume that every $f_k$ is \walter{a strictly convex piecewise linear function} with respect to a locally finite $\Lambda$-periodic $\Sigma$-transversal polytopal decomposition $\Ccal_k$ of $N_\R$. By Proposition \ref{toric metrics on A} and Theorem \ref{semipositive toric metrics}, the function $f_k$ corresponds to a semipositive toric metric $\metr_k$ of $L$. Since $f_k$ satisfies the cocycle rule, we deduce from \eqref{symmetric bilinear form'} and \eqref{linear rationality property'} that there is a non-zero $m_k \in \N$ such that $g_k \coloneqq m_kf_k$ is piecewise $(\Z,\Gamma)$-linear. By   \ref{descend of Mumford models}, we get that $\metr_k^{\otimes m_k}$ is the formal metric  induced by the line bundle $\Lfrak(g_k)$ on the formal Mumford model $\Afrak_k$ associated to $\Ccal_k$.
\end{proof}
 
\begin{rem} \label{algebraicity of the Mumford model}
	Due to the analytic nature of the quotient, formal Mumford models $\Afrak=\Efrak/\Lambda$ of $A$ as in \ref{descend of Mumford models} are not necessarily algebraic. 
	But in Proposition \ref{transversal approximation of semipositive toric metrics}, we may also assume that every formal Mumford model $\Afrak_k$ is the formal completion of an algebraic model $\Acal_k$ of $A$ and that a positive tensor power of the model metric $\metr_k$ is induced by an ample model on  $\Acal_k$. Using \cite[Remark 8.2.7]{burgos-gubler-jell-kuennemann2}, this follows from strict convexity of the $f_k$. 
\end{rem}

\section{Strictly polystable alterations} \label{section strictpolyalt}

In this section, we recall strictly polystable alterations and their refinements obtained from polytopal decompositions of the skeletons. Let $K$ be an algebraically closed non-archimedean field $K$ with non-trivial additive valuation $v$ and value group $\Gamma \coloneqq v(K^\times)$ as a  subgroup of $\R$.   

We will study strictly polystable alterations for a closed subvariety $X$ of an abelian variety $A$ over a non-trivially valued algebraically closed non-archimedean field $K$ and we will relate it to Mumford models of $A$. 
At the end, we give a  degree formula which is rather technical, but will be crucial for computing the Monge--Amp\`ere measure of toric metrics in the next section. 
The material covered generalizes \cite[\S 5]{gubler-compositio} from strictly semistable to strictly polystable alterations; the arguments remain the same.

\begin{art} \label{stratification}
	Let $Y$ be any reduced  scheme locally of finite type over a field. Then $Y$ has a canonical \emph{stratification}. 	The strata of codimension $0$ are the irreducible components of the normality locus of $Y$, the strata of codimension $1$ are the irreducible components of the normality locus of the complement of the previous normality locus and so on, see \cite[\S 2]{berkovich-1999}. The strata are partially ordered by inclusion of their closures.
\end{art}

\begin{art} \label{toric schemes}
We recall the notion of toric schemes. Let $T=\mathbb G_{\rm m}^r$ be a split torus over $K$ with toric coordinates $x_1,\dots, x_r$ leading to the classical tropicalization map
$$\trop \colon 	(\mathbb G_{\rm m}^r)^\an \longrightarrow \R^r, 
\quad p \longmapsto (v(p_1),\dots, v(p_r)).$$
For a $(\Z,\Gamma)$-polytope $\Delta$ of $\R^r$, there is an \emph{associated toric  formal scheme} $\Ufrak_\Delta =\Spf(A_\Delta)$ over $\kcirc$ given by
$$A_\Delta \coloneqq \left \{ \sum_{m \in \Z^r} a_m x_1^{m_1} \dots x_r^{m_r} \mid \text{$\lim_{|m| \to \infty} v(a_m) + m \cdot \omega=\infty$ for all $\omega  \in \Delta$} \right\}$$
where $m \cdot \omega$ is the standard inner product on $\R^r$ and $|m|=m_1+\dots+m_r$. More generally, for any $(\Z,\Gamma)$-polytopal decomposition $\Dcal$ of $\Delta$, we get an associated toric formal scheme $\Ufrak_\Dcal$ over $\kcirc$ with open subsets $\Ufrak_\sigma$ for $\sigma
 \in \Dcal$ by gluing. These are admissible formal schemes with generic fiber $\trop^{-1}(\Delta)$ and reduced special fiber. We refer to \cite[\S 4]{gubler-2007a} for more details and to \cite{gubler-guide} for an algebraic description of these toric schemes.

Note that $\Tan$ has a \emph{canonical skeleton} $\Sk(T)$   given by the weighted Gauss norms and a canonical  retraction map $\tau_T\colon \Tan \to \Sk(T)$ such that $\trop\circ \tau_T=\trop$  and such that the tropicalization map restricts to a homeomorphism from $\Sk(\Ufrak_\Delta)$ onto $\R^r$, see \cite[\S 6.3]{berkovich-book}. Then we define $\Sk(\Ufrak_\Delta) \coloneqq \Sk(T)\cap \trop^{-1}(\Delta)$.
\end{art}

\begin{art} \label{strictly polystable schemes}
A \emph{non-degenerate strictly polystable formal scheme} $\Xfrak'$ over $\kcirc$ is an admissible formal scheme with reduced special fiber defined as follows. 
The formal scheme $\Xfrak'$ is covered by open affine formal schemes $\Ufrak'$ with etale morphisms $\psi\colon \Ufrak' \to \Ufrak_\Delta$ to an affine toric formal scheme associated to a $(\Z,\Gamma)$-standard polysimplex $\Delta$ in $\R^r$. Here, the number $r$ might depend on $\Ufrak'$ and a standard polysimplex is the product of standard simplices $\Delta_j$ in $\R^{r_j}$ with $r=\sum r_j$ of the form $\Delta_j=\{\omega \in [0,1]^{r_j} \mid \omega_1+\dots+\omega_{r_j} \leq \gamma_j \}$ for some $\gamma_j \in \Gamma_{\geq 0}$. 
If $\Ufrak_s'$ has a unique minimal stratum which maps to the minimal stratum of the special fiber of $\Ufrak_\Delta$, then we call $(\Ufrak',\psi)$ a \emph{building block} of $\Xfrak'$. By shrinking the above covering, we deduce easily that every non-degenerate strictly polystable formal scheme is covered by building blocks.
We refer to \cite[\S 1]{berkovich-1999}	for more details.
\end{art}

\begin{art} \label{piecewise linear structure}
Berkovich has shown that for a strictly polystable formal scheme $\Xfrak'$ over $\kcirc$, there is a \emph{skeleton} $\Sk(\Xfrak')$, given as a closed subset of $\Xfrak_\eta'$, and a canonical 
\emph{retraction map} $\tau\colon \Xfrak_\eta' \to \Sk(\Xfrak')$ which is a proper strong deformation retraction of $\Xfrak_\eta'$, see \cite[Theorem 5.2]{berkovich-1999}.

In fact, the skeleton is constructed from the building blocks $\psi\colon \Ufrak' \to \Ufrak_\Delta$ using $\Sk(\Xfrak') \cap \Ufrak_\eta'=\Sk(\Ufrak')$ and $\Sk(\Ufrak')=\psi^{-1}(\Sk(\Ufrak_\Delta))$. Since $\psi$ restricts to a homeomorphism from $\Sk(\Ufrak')$ onto  $\Sk(\Ufrak_\Delta)$ and the latter is mapped by $\trop$ homeomorphically onto the polysimplex $\Delta$, we can endow the skeleton $\Sk(\Xfrak')$ with a piecewise $(\Z,\Gamma)$-linear structure coming with \emph{canonical faces} $\Sk(\Ufrak')$ related to the building blocks such that the canonical face $\Sk(\Ufrak')$ is isomorphic to the polysimplex $\Delta$ via $\trop \circ \psi$. 
We refer to  \cite[\S 5]{berkovich-1999} for details.

The canonical faces are in bijective  correspondence to the strata of $\Xfrak_s'$. The canonical face $\Delta_S$ of $\Sk(\Xfrak')$ corresponding to a stratum $S$ of $\Sk(\Xfrak')$ is determined by  
$\relint(\Delta_S)= \tau(\red^{-1}(S))$. This stratum-face correspondence is order reversing and hence the irreducible components of $\Xfrak_s'$ correspond to the vertices of $\Sk(\Xfrak')$.
\end{art}

\walter{In the following, we define a \emph{polytope in $\Sk(\Xfrak')$} as a (convex) polytope contained in a canonical face of $\Sk(\Xfrak')$ identifying the latter with a polysimplex $\Delta$ as above.}

\begin{definition} \label{polytopal of skeleton}
	Let $\Xfrak'$ be a strictly polystable formal scheme over $\kcirc$ with skeleton $\Sk(\Xfrak')$. Then a \emph{polytopal subdivision} of $\Sk(\Xfrak')$ is a finite set $\Dcal$ of polytopes \walter{in $\Sk(\Xfrak')$}
	such that for every stratum $S$ the set $\Dcal_S \coloneqq \{\Delta \in \Dcal \mid \Delta \subset \Delta_S \}$ is a polytopal decomposition of $\Delta_S$.
\end{definition}

\begin{art} \label{refinement of polystable}
Let $\Xfrak'$ be a strictly polystable formal scheme over $\kcirc$ with generic fiber $X'$ and let  $\Dcal$ be a $(\Z,\Gamma)$-polytopal subdivision of $\Sk(\Xfrak')$ as above. Then there is an associated formal $\kcirc$-model $\Xfrak''$ of $X'$ with reduced special fiber and with a morphism $\iota \colon \Xfrak'' \to \Xfrak'$ extending the identity on $X'$. Locally, over a building block $\Ufrak'$ with etale morphism $\psi\colon \Ufrak'\to \Ufrak_\Delta$, the preimage $\Ufrak''$ of $\Ufrak'$ with respect to $\iota'$ is given by the cartesian diagram
\begin{equation}  \label{diagram of can morphisms}
\begin{CD} 
\Ufrak'' @>\psi'>> \Ufrak_{\Dcal_S}\\
@VV{\iota'}V    @VV{\iota}V\\
\Ufrak' @>{\psi}>> \Ufrak_\Delta
\end{CD}
\end{equation}
of formal schemes over $\kcirc$ and in general we obtain $\Xfrak''$ and $\iota'$ by gluing. Here, we used the induced polytopal decomposition $\Dcal_S$ of $\Delta = \Delta_S$  and 
 the canonical morphism $\iota$ of the toric formal $\kcirc$-models from \ref{toric schemes}. We refer to \cite[\S 5.6]{gubler-compositio} for more details in the strictly semistable case and to \cite[Remark 5.19]{gubler-compositio} for the generalization to the polystable case.
\end{art}

\begin{art} \label{properties of refinement}
We will now describe the crucial properties of the above formal $\kcirc$-model $\Xfrak''$. We refer to \cite[Proposition 5.7, Corollary 5.8]{gubler-compositio} for the arguments which generalize to our polystable setting \cite[Remark 5.19]{gubler-compositio}. There is again a bijective order-reversing  correspondence between the strata $R$ of $\Xfrak''$ and the  faces $\sigma$ of $\Dcal$ given by 
\begin{equation} \label{stratum-face correspondence}
R= \red\left(\tau^{-1}(\relint(\sigma))\right), \quad \relint(\sigma)= \trop\left(\red^{-1}(Y)\right),
\end{equation}	
where $Y$ is any non-empty subset of $R$. We have $\dim(\sigma)=\codim(R,\Xfrak_s'')$ and hence the irreducible components $Y$ of $\Xfrak_s''$ are in bijective correspondence to the vertices $\xi$ of $\Dcal$. The vertex corresponding to $Y$ is the unique point $\xi$ of $X'$ with $\red(\xi)$ being the generic point of $Y$.

Let $R$ be a stratum of $\Xfrak_s''$ with corresponding face $\Delta \in \Dcal$. Then $\relint(\Delta)$ is contained in the relative interior of a unique canonical face $\Delta_S$ of $\Sk(\Xfrak')$ corresponding to a stratum $S$ of $\Xfrak_s'$. Then $R$ is a fiber bundle over $S$ via $\iota'$ with the fiber being a torus of rank $\codim(\Delta,\Delta_S)$ and hence $R$ is smooth. The closure of $R$ is the union of the strata corresponding to the faces $\sigma \in \Dcal$ with $\Delta \subset \sigma$. 
\end{art}

\begin{definition} \label{polystable alteration}
Let $X$ be a proper variety over $K$ with formal $\kcirc$-model $\Xfrak$ over $\kcirc$. Then a \emph{strictly polystable alteration} is a generically finite proper morphism $X' \to X$ from a smooth variety $X'$ over $K$ which extends to a morphism $\varphi\colon \Xfrak' \to \Xfrak$ for a non-degenerate strictly polystable formal $\kcirc$-model $\Xfrak'$ of $X'$. 
\end{definition}

\begin{rem} \label{existence of polystable alteration}
It has been shown in \cite[Theorem 5.2.19]{adiprasito_etal} that a strictly polystable alteration always exists, at least when $\Xfrak$ is algebraic. By \cite[Lemma 2.4]{gubler-martin}, any formal $\kcirc$-model is dominated by the formal completion of an algebraic $\kcirc$-model. 
\end{rem}
 
\begin{art} \label{setup}
Now we fix the following setup. Let $X$ be a closed subvariety of the abelian variety $A$. 
We  use the Raynaud extension $$0 \longrightarrow \Tan \longrightarrow E^\an \stackrel{q}{\longrightarrow} B^\an \longrightarrow 0$$
and the notation from the previous section. 
We choose a formal Mumford model $\Afrak_0$ of $A$ over $\kcirc$ associated to a $\trop(\Lambda)$-periodic $(\Z,\Gamma)$-polytopal decomposition $\Ccal_0$ of $N_\R$ and we denote by $\Xfrak$ the closure of $\Xan$ in $\Afrak_0$ as in \cite[Proposition 3.3]{gubler-crelle}. 
\walter{It is called closure as it is similar to the construction of the schematic closure of $X$ in an algebraic model of $A$ over $\kcirc$.}
 We assume that there is a strictly polystable alteration $\varphi_0\colon \Xfrak' \to \Xfrak$. We denote the generic fiber of $\varphi_0$ by $f\colon \Xfrak_\eta' \to \Xan$. Using Remarks \ref{algebraicity of the Mumford model} and \ref{existence of polystable alteration}, for any closed subvariety $X$ of $A$ such a Mumford model $\Afrak_0$ with a strictly polystable alteration for $\Xfrak$ exists.
\end{art}

\begin{art} \label{linearization of f}
By \cite[Proposition 5.11, Remark 5.19]{gubler-compositio}, there is a unique map 
$$\overline{f}_{\rm aff}\colon \Sk(\Xfrak')\longrightarrow \tropbar(A^\an)=N_\R/\trop(\Lambda)$$ with $\overline{f}_{\rm aff} \circ \tau = \tropbar \circ f$. For every canonical face $\Delta'$ of $\Sk(\Xfrak')$, there is a unique face $\overline\Delta$ of the polytopal decomposition $\overline{\Ccal_0}$ of $N_\R/\trop(\Lambda)$ such that $ \overline{f}_{\rm aff}(\relint(\Delta'))\subset\relint(\overline\Delta)$. Moreover, the restriction of $\overline{f}_{\rm aff}$ to $\Delta'$ is a $(\Z,\Gamma)$-affine map. We denote by $f_{\rm aff}\colon \Sk(\Xfrak') \to N_\R$ a lift of $\overline{f}_{\rm aff}$ which might be multi-valued and which is unique up to $\trop(\Lambda)$-translation. Note that the restriction of $f_{\rm aff}$ to a canonical face $\Delta'$ is a single-valued affine function, unique up to $\trop(\Lambda)$-translation. \end{art}

\begin{art} \label{lifts to the abelian scheme}
Using  the uniformization  $A^\an=E^\an/\Lambda$, there is a multi-valued continuous lift $F\colon \Xfrak_\eta' \to E^\an$ of $f$ which is unique up to $\Lambda$-translation. Then $q \circ F$ extends to a multi-valued continuous morphism $G\colon \Xfrak' \to \Bfrak$ for the formal abelian scheme $\Bfrak$ over $\kcirc$ associated to $B$. To omit multi-valued morphisms, we consider a stratum $S$ of $\Xfrak_s'$. 
One can show that the restriction of $G$ to $\overline S$ is a morphism which is canonical up to $q(\Lambda)$-translation. 
\walter{This is based on the fact that $\red_{\Xfrak'}^{-1}(\overline{S})$ is contractible as its skeleton $\Delta_{S}$ is contractible and hence the restriction of $f$ to  $\red_{\Xfrak'}^{-1}(\overline{S})$ lifts to the universal cover $E^\an$ of $A^\an$.}
We refer to \cite[Remarks 5.16 and 5.19]{gubler-compositio} for details.
\end{art}

\begin{art} \label{line bundle for degree}
Let $L$ be a rigidified line bundle on $A$. Then there is a rigidified line bundle $H$ on $B$ with $p^*(\Lan)=q^*(H^\an)$ and cocycle $(z_\lambda)$ as in \ref{line bundle on the uniformization}. Let us consider a function $h\colon N_\R \to \R$ which is piecewise $(\Z,\Gamma)$-linear with respect to the $\trop(\Lambda)$-periodic $(\Z,\Gamma)$-polytopal decomposition $\Ccal_1$ of $N_\R$.  Let $\Afrak_1$ be the associated formal Mumford model of $A$, let $\Lfrak=\Lfrak(h)$ be the line bundle  on $\Afrak_1$ induced by $h$ and let $\Hfrak$ be the model of $H$ on $\Bfrak$, see \ref{descend of Mumford models}. Similarly as in \cite[5.17]{gubler-compositio}, we see that $\Sk(\Xfrak')$ has the $(\Z,\Gamma)$-polytopal subdivision
\begin{equation} \label{decompositon identity}
\Dcal= \{ \Delta_S \cap \overline{f}_{\rm aff}^{-1}(\overline\sigma) \mid \text{$S$ stratum of $\Xfrak'$ , $\overline\sigma \in \overline{\Ccal_1}$}\}
\end{equation}
such that $f\colon \Xfrak_\eta' \to A^\an$ extends to a morphism $\varphi_1\colon \Xfrak'' \to \Afrak_1$. Here, we use the formal scheme $\Xfrak''$ over $\Xfrak'$ associated to the subdivision $\Dcal$ by the construction in \ref{refinement of polystable}.

Our goal is to compute the degree of an irreducible component $Y$ of $\Xfrak_s''$ with respect to (the pull-back of) $\Lfrak$. By  \ref{properties of refinement}, $Y$ corresponds to a vertex $\xi_Y$ of $\Dcal$. 
Let $\overline \sigma$ be the unique face of $\overline{\Ccal_1}$ such that $\overline{f}_{\rm aff}(\xi_Y)$ is contained in $\relint(\overline \sigma)$.
 Since $\xi_Y$ is a vertex of the polytopal subdivison given by \eqref{decompositon identity}, we conclude that $\overline{f}_{\rm aff}$ is injective on $\Delta_S$ and that 
\begin{equation} \label{vertex intersection}
\overline{f}_{\rm aff}(\xi_Y)= \overline{f}_{\rm aff}(\Delta_S)\cap \overline\sigma.
\end{equation}
Since $\Delta_S$ is a $(\Z,\Gamma)$-polytope, the underlying linear space $\L_{\Delta_S}$ has a well-defined $\Z$-linear structure which we will use to compute Monge--Amp\`ere measures in the following result.
\end{art}	
	
\begin{prop} \label{degree formula in vertex}
Using the above notation, we assume that $h\circ f_{\rm aff}$ is convex in $\xi_Y$ and we denote by $h_{Y}$ the induced conic convex function in $\xi_Y$ on the linear space   $\L_{\Delta_S}$. If the intersection in \eqref{vertex intersection} is transversal, which means $\dim(\Delta_S)=\codim(\sigma,N_\R)$, then 
$$\deg_\Lfrak(Y)=\frac{d!}{e!} \cdot \deg_\Hfrak(\overline S) \cdot \MA(h_Y)(\{\xi_Y\})$$
where $d \coloneqq \dim(X)$, $e$ is the dimension of the stratum $S$ and the real  Monge--Amp\`ere measure on the right is computed with respect to the $\Z$-linear structure of $\L_{\Delta_S}$.
\end{prop}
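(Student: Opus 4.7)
The plan is to exploit the toric variety bundle structure of $Y$ over $\overline S$ provided by \ref{properties of refinement}: since $\xi_Y$ is a vertex of $\Dcal$ lying in $\relint(\Delta_S)$, the open stratum of $Y$ is a torus bundle of rank $\dim(\Delta_S)=d-e$ over $S$ via $\iota'$, so $Y$ is the total space of a toric variety bundle $\pi\colon Y\to\overline S$ with $(d-e)$-dimensional toric fibers. Locally on $\Efrak_1$ the line bundle $\Lfrak$ equals $q^*\Hfrak\otimes\Ocal_\Efrak(\phi)$ by \ref{descend of Mumford models}. Choosing a local lift $\tilde\varphi_1\colon\Ufrak''\to\Efrak_1$ of $\varphi_1$ (unique up to a $\Lambda$-translation, which does not affect intersection numbers), the relation $q\circ\tilde\varphi_1=G\circ\iota'$ gives $\tilde\varphi_1^*(q^*\Hfrak)|_Y=\pi^*(G^*\Hfrak|_{\overline S})$, the horizontal factor, while $\tilde\varphi_1^*\Ocal_\Efrak(\phi)|_Y$ supplies the vertical toric contribution.

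Under the transversality hypothesis $\dim(\Delta_S)=\codim(\sigma,N_\R)$, the image $\overline f_{\rm aff}(\Delta_S)$ meets $\overline\sigma$ transversally at the single point $\overline f_{\rm aff}(\xi_Y)$. Unwinding the refinement construction of \ref{refinement of polystable} on a building block $\psi\colon\Ufrak'\to\Ufrak_\Delta$ via the cartesian diagram \eqref{diagram of can morphisms}, one identifies the generic fiber $F$ of $\pi$ with the $(d-e)$-dimensional toric variety whose fan is the star fan of $\xi_Y$ in the $\Dcal_S$-subdivision of $\Delta_S$, taken with respect to the $\Z$-structure on $\L_{\Delta_S}$ inherited from the $(\Z,\Gamma)$-polysimplex $\Delta_S$. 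Moreover, $\tilde\varphi_1^*\Ocal_\Efrak(\phi)|_F$ is, up to the affine part of $\phi$ along $\sigma$, the toric line bundle associated to the conic piecewise linear function at $\xi_Y$ of $\phi\circ f_{\rm aff}|_{\Delta_S}$. Since $h-\phi$ equals the quadratic function $\lambda\mapsto z_\lambda(0)$, its composition with $f_{\rm aff}$ is smooth on $\Delta_S$ and contributes at $\xi_Y$ only a linear function, which does not alter the Monge--Amp\`ere measure. The classical toric degree formula recalled in \ref{subsection real MA} therefore gives
$$\deg_{\tilde\varphi_1^*\Ocal_\Efrak(\phi)|_F}(F)=(d-e)!\cdot\MA(h_Y)(\{\xi_Y\}).$$

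Finally, expanding $c_1(\Lfrak|_Y)^d$ into horizontal and vertical components and applying the projection formula for $\pi$, one checks that only the term with $e$ horizontal and $d-e$ vertical factors survives: higher horizontal powers vanish as pull-backs from the $e$-dimensional $\overline S$, while higher vertical powers vanish after fiberwise integration since the toric fiber $F$ has dimension $d-e$. Combined with the identity $\binom{d}{d-e}(d-e)!=d!/e!$ this yields
$$\deg_\Lfrak(Y)=\binom{d}{d-e}\cdot\deg_\Hfrak(\overline S)\cdot\deg_{\tilde\varphi_1^*\Ocal_\Efrak(\phi)|_F}(F)=\frac{d!}{e!}\cdot\deg_\Hfrak(\overline S)\cdot\MA(h_Y)(\{\xi_Y\}).$$
The main obstacle is the rigorous local decoupling of $\tilde\varphi_1^*\Ocal_\Efrak(\phi)|_Y$ into horizontal and vertical toric constituents enabled by the transversality hypothesis, together with the precise combinatorial identification of $F$ and of $\tilde\varphi_1^*\Ocal_\Efrak(\phi)|_F$. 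This is the direct polystable counterpart of the strictly semistable calculation carried out in \cite[\S 5]{gubler-compositio}, to which the paper explicitly reduces.
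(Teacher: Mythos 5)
The paper's own ``proof'' is a citation to \cite[Proposition 5.18, Remark 5.19]{gubler-compositio}, so there is no in-text argument to compare against; your reconstruction follows the same geometric plan (fiber-bundle structure of $Y$ over $\overline S$, horizontal/vertical splitting of $\Lfrak|_Y$, toric degree formula for the fiber, binomial bookkeeping) that is almost certainly what the cited proof carries out, suitably adapted from semistable to polystable charts.

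The step that needs justification is the assertion that ``higher vertical powers vanish after fiberwise integration since the toric fiber $F$ has dimension $d-e$.'' In the binomial expansion of $c_1(\Lfrak|_Y)^d = (\pi^*\alpha + \beta)^d$, the terms with $k > e$ factors of $\pi^*\alpha$ vanish because $\alpha^{e+1}=0$ on the $e$-dimensional $\overline S$, and the $k=e$ term gives the stated product; but the terms with $k<e$, i.e.\ $\pi_*(\beta^m)$ for $m>d-e$, do \emph{not} vanish merely because the fibers are $(d-e)$-dimensional. For a projective bundle this is exactly where Segre classes appear, and $\pi_*(\xi^m)\neq 0$ for $m>\dim F$. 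The Mumford model $\Efrak_1$ of $E$ is a nontrivial torus-bundle over $\Bfrak$, so its stratum closures are nontrivial toric-variety bundles over $\Bfrak_s$, and the toric line bundle $\Ocal_\Efrak(\phi)$ restricted there has, a priori, a ``horizontal twist'' coming from the torsor. You need an actual argument --- either that in a suitable chart $Y$ becomes an honest product and the twist cancels, or a computation in the (equivariant) Chow ring of the toric bundle showing that the contributions for $k<e$ drop out when one also accounts for the $q^*\Hfrak$-factor --- before the binomial identity $\binom{d}{e}(d-e)! = d!/e!$ can be invoked. You do flag the ``decoupling'' as the main obstacle, but your stated reason for the vanishing is a dimension count that would only suffice in the product case; as written this is the genuine gap. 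A secondary point worth making explicit: $\iota'|_Y\colon Y\to\overline S$ is only a fiber bundle on the open stratum $R\to S$, so the degree computation should be phrased via push-forward along the morphism $Y\to\overline S$ rather than via a naive ``fiber times base'' argument on the possibly singular closures.
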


\begin{proof} Note that the Monge--Amp\`ere measure $\MA(h_Y)$ is a discrete measure on $\Delta_S$ supported in the vertex $\xi_Y$ of $\Dcal$, see \ref{subsection real MA} for a description. The strictly semistable case has been proven in  \cite[Proposition 5.18]{gubler-compositio} and the arguments generalize to the case of strictly polystable alterations, see \cite[Remark 5.19]{gubler-compositio}.
\end{proof}

\section{Monge--Amp\`ere measures of toric metrics} \label{section MA tormetr}
 
In this section, we use the results from the previous sections to compute the 
Monge--Amp\`ere measures of toric metrics on a closed $d$-dimensional subvariety $X$ of an abelian variety $A$ over an algebraically closed non-archimedean field $K$ with non-trivial value group $\Gamma$.
To describe Monge--Amp\`ere measures of toric metrics on $\Xan$, we choose a formal Mumford model $\Afrak_0$ of $A$ and a strictly polystable alteration $\varphi_0\colon \Xfrak' \to \Xfrak$ for the closure $\Xfrak$ of $\Xan$ in $\Afrak_0$ as in \ref{setup}. We will first compute the Monge--Amp\`ere measures for the pull-back metrics on $\Xfrak_\eta'$ and then we will use the projection formula with respect to the generic fiber $f\colon \Xfrak_\eta' \to \Xan$ of $\varphi_0$. 
  
 \begin{art} \label{preliminaries for MA}
Recall the uniformization $A^\an=E^\an/\Lambda$ from the Raynaud extension 
$$0 \longrightarrow \Tan \longrightarrow E^\an \stackrel{q}{\longrightarrow} B^\an \longrightarrow 0.$$
Let $\Bfrak$ be the formal abelian scheme over $\kcirc$ with generic fiber $B^\an$. We fix an  ample line bundle $L$ on $A$. We have seen in \ref{line bundle on the uniformization} that $L$ has an associated ample line bundle $H$ on $B$ and we denote by $\Hfrak$ the associated formal model of $H$ on $\Bfrak$. 

By Proposition \ref{bijective correspondence for toric metrics} and denoting the cocharacter lattice of $T$ by $N$, a continuous toric metric $\metr$ on $\Lan$ corresponds to a function $f_\metr\colon N_\R \to \R$, satisfying the cocycle rule. The metric $\metr$ is semipositive if and only if $f_\metr$ is a convex function, see Theorem \ref{semipositive toric metrics}. 

For a canonical face $\Delta_S$ with associated stratum $S$ of $\Xfrak_s'$,  there is an affine map $f_{\rm aff}\colon \Delta_S \to N_\R$ which is canonical up to $\trop(\Lambda)$-translation and a  morphism $G\colon \overline S \to \Bfrak_s$ which is canonical up to $q(\Lambda)$-translation, see \ref{linearization of f} and \ref{lifts to the abelian scheme}.
 \end{art}

\begin{thm} \label{toric MA on X'}
Using the above notation, a continuous semipositive toric metric $\metr$ on the ample line bundle $L$ and an $e$-dimensional stratum $S$ of $\Xfrak_s'$, we have 
$$c_1(f^*L,f^*\metr)^{\wedge d}(\Omega)= \frac{d!}{e!} \cdot \deg_\Hfrak(\overline S) \cdot \MA(f_\metr \circ f_{\rm aff}|_{\relint(\Delta_S)})(\Omega)$$
for any Lebesgue measurable subset $\Omega$ of $\relint(\Delta_S)$ where 
$\deg_\Hfrak(\overline S)\coloneqq \deg_{G^*\Hfrak}(\overline S)$.
\end{thm}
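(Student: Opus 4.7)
The plan is to reduce first to the piecewise linear case via Proposition \ref{transversal approximation of semipositive toric metrics}, then exploit the degree formula of Proposition \ref{degree formula in vertex} at each vertex lying in $\relint(\Delta_S)$, and finally pass to the limit on both sides of the claimed identity. The key preparatory step is to choose the finite set $\Sigma$ in Proposition \ref{transversal approximation of semipositive toric metrics} to consist (up to $\trop(\Lambda)$-translation, invoking Remark \ref{finitely generated sigma and transversal}) of all polytopes $f_{\rm aff}(\Delta_{S'})\subset N_\R$ for $S'$ a stratum of $\Xfrak_s'$, together with all their faces. This is possible since the set of canonical faces of $S(\Xfrak')$ is finite, and it guarantees that the polytopal decompositions $\Ccal_k$ appearing in the approximation are transversal to every $f_{\rm aff}(\Delta_{S'})$, which is precisely the transversality hypothesis needed to invoke Proposition \ref{degree formula in vertex}.

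Fix such an approximation: semipositive toric model metrics $\metr_k$ on $L$ with $\metr_k^{\otimes m_k} = \metr_{\Lfrak(g_k)}$ for $g_k = m_k f_k$ strictly convex and piecewise $(\Z,\Gamma)$-linear with respect to $\Ccal_k$, and $f_k \to f_\metr$ uniformly. The subdivision $\Dcal_k$ of $S(\Xfrak')$ from \eqref{decompositon identity} produces a formal model $\Xfrak''_k$ dominating $\Xfrak'$ together with an extension $\varphi_{1,k}\colon \Xfrak''_k \to \Afrak_k$ of $f$. By \S\ref{subsection nonarMA},
$$c_1\bigl(f^*L^{\otimes m_k},\, f^*\metr_k^{\otimes m_k}\bigr)^{\wedge d} \;=\; \sum_Y \deg_{\varphi_{1,k}^*\Lfrak(g_k)}(Y)\,\delta_{\xi_Y},$$
where $Y$ ranges over the irreducible components of $\Xfrak''_{k,s}$ and $\xi_Y$ is the corresponding vertex of $\Dcal_k$ (see \ref{properties of refinement}). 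For those $Y$ with $\xi_Y \in \relint(\Delta_S)$, the $\Sigma$-transversality yields that the intersection in \eqref{vertex intersection} is transversal, so Proposition \ref{degree formula in vertex} gives
$$\deg_{\varphi_{1,k}^*\Lfrak(g_k)}(Y) \;=\; \frac{d!}{e!}\,\deg_{\Hfrak^{\otimes m_k}}(\overline S)\cdot \MA\!\bigl((g_k\circ f_{\rm aff})_Y\bigr)(\{\xi_Y\}),$$
and the right-hand mass is, by the description in \S\ref{subsection real MA}, exactly the mass placed at $\xi_Y$ by the real Monge--Amp\`ere measure of the piecewise linear convex function $g_k\circ f_{\rm aff}$ on $\Delta_S$.

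Summing over vertices in $\relint(\Delta_S)$ (using that $\MA$ of a piecewise linear convex function is supported on vertices) and then dividing through by $m_k^d$, together with $\dim(\Delta_S)=d-e$ (which forces $\MA(g_k\circ f_{\rm aff}) = m_k^{d-e}\MA(f_k\circ f_{\rm aff})$) and $\deg_{\Hfrak^{\otimes m_k}}(\overline S) = m_k^e\deg_\Hfrak(\overline S)$, gives
$$c_1(f^*L,f^*\metr_k)^{\wedge d}|_{\relint(\Delta_S)} \;=\; \frac{d!}{e!}\,\deg_\Hfrak(\overline S)\cdot \MA(f_k\circ f_{\rm aff})|_{\relint(\Delta_S)}.$$
Passing to the limit $k\to\infty$ on the left uses weak convergence of Chambert-Loir measures under uniform convergence of continuous semipositive metrics on an ample line bundle (\S\ref{subsection nonarMA}), and on the right uses continuity of the real Monge--Amp\`ere operator with respect to uniform convergence of convex functions (\S\ref{subsection real MA}). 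This yields the stated identity on Lebesgue measurable subsets $\Omega$ of $\relint(\Delta_S)$, noting that $f_\metr\circ f_{\rm aff}|_{\Delta_S}$ is well-defined up to an affine function (since the cocycles $z_\lambda$ are affine, so translating $f_{\rm aff}$ by $\trop(\Lambda)$ changes $f_\metr\circ f_{\rm aff}$ by an affine term, which leaves $\MA$ invariant).

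The main obstacle is the synchronization of the transversality input supplied by Proposition \ref{transversal approximation of semipositive toric metrics} with the hypothesis required by Proposition \ref{degree formula in vertex}, and the careful bookkeeping of the scaling factors $m_k^d = m_k^e \cdot m_k^{d-e}$ so that the formula for $\metr_k^{\otimes m_k}$ descends cleanly to the formula for $\metr_k$; once these are in place, the reduction to piecewise linear metrics and the limiting argument are essentially routine.
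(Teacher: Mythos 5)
Your proposal is correct and follows essentially the same route as the paper's proof: approximate by piecewise $(\Z,\Gamma)$-linear toric metrics via Proposition \ref{transversal approximation of semipositive toric metrics} with a $\Sigma$ chosen to guarantee the transversality hypothesis of Proposition \ref{degree formula in vertex}, compute the Chambert-Loir measure at the vertices of $\Dcal$ lying in $\relint(\Delta_S)$, and pass to the limit using the continuity of both Monge--Amp\`ere operators. Two minor points: the paper takes $\Sigma$ to consist only of $f_{\rm aff}(\Delta_S)$ and its faces for the one stratum $S$ fixed in the statement (your larger $\Sigma$ over all strata works but is unnecessary), and your explicit note that $f_\metr\circ f_{\rm aff}$ is determined only modulo affine functions (so $\MA$ is unambiguous) is a correct observation the paper leaves implicit.
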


In case of a discretely valued complete field $K$, a strictly semistable alteration $\varphi_0$ and the canonical metric for $L$, this result has been shown in \cite[Theorem 6.7]{gubler-compositio}.

\begin{proof}
	Both sides of the claim are continuous with respect to uniform convergence of the semipositive metrics and weak convergence of Radon measures, hence by Proposition \ref{transversal approximation of semipositive toric metrics} we may assume that $\metr$ is a semipositive model metric of $L$ determined on a formal Mumford model $\Afrak_1$ associated to a locally finite $\trop(\Lambda)$-periodic $(\Z,\Gamma)$-polytopal decomposition $\Ccal_1$ of $N_\R$ and that $h \coloneqq f_\metr$ is a piecewise $(\Q,\Gamma)$-linear strictly convex function with respect to $\Ccal_1$. We may even assume for a given finite set $\Sigma$ of polytopes in $N_\R$ that $\Ccal_1$ is $\Sigma$-transversal. We use this for the set $\Sigma$ consisting of the polytope $f_{\rm aff}(\Delta_S)$ and all its faces. 
	
We have also seen that there is a non-zero $m \in \N$ such that $mh$ is piecewise $(\Z,\Gamma)$-linear. If we replace $(L,\metr)$ by $(L^{\otimes m},\metr^{\otimes m})$, then both sides of the claim are multiplied by $m^d$ and hence we may assume that $h = f_\metr$ is piecewise $(\Z,\Gamma)$-linear. Then Proposition \ref{transversal approximation of semipositive toric metrics} shows that $\metr$ is the model metric associated to the model $\Lfrak \coloneqq \Lfrak(h)$ of $L$ on $\Afrak_1$.

The polytopal decomposition $\Ccal_1$ induces a formal $\kcirc$-model $\Xfrak''$ of $X$ over $\Xfrak'$ given by the polytopal subdivision $\Dcal$ of $\Sk(\Xfrak')$ from \eqref{decompositon identity} and a morphism $\varphi_1\colon \Xfrak'' \to \Afrak_1$ as in \ref{line bundle for degree}. 
By construction, the metric $f^*\metr$ is the formal metric associated to the model $\varphi_1^*(\Lfrak)$ of $f^*(L)$ on $\Xfrak''$. It follows from \ref{line bundle for degree} that the Monge--Amp\`ere measure $c_1(f^*(L),f^*\metr)$ is supported in the vertices of $\Dcal$, as the latter are the Shilov points $\xi_Y$ for the irreducible components $Y$ of $\Xfrak_s''$. 
On the other hand, we note that $h\circ f_{\rm aff}|_{\Delta_S}$ is a piecewise linear convex function with respect to $\Dcal_S=\Dcal \cap \Delta_S$ and hence 
the Monge--Amp\`ere measure $\MA(g\circ f_{\rm aff}|_{\relint(\Delta_S)})$ is a discrete measure supported in those vertices of $\Dcal$ which are contained in $\relint(\Delta_S)$, see \ref{subsection real MA}. It remains to check the claim for $\Omega$ consisting of a single vertex $\xi_Y$ of $\Dcal$. Then we may replace $h\circ f_{\rm aff}$ on the right hand side by $h_Y $ for the conic piecewise linear convex function $h_Y$ in $\xi_Y$ induced by $h\circ f_{\rm aff}$ and the claim follows from Proposition \ref{degree formula in vertex}. Note that the transversality assumption there holds as $\Ccal_1$ is $\Sigma$-transversal.	
\end{proof} 
 
In the setting of Theorem \ref{toric MA on X'}, we call the canonical face  $\Delta_S$ of the skeleton $\Sk(\Xfrak')$ \emph{non-degenerate with respect to $f$} if 
\begin{equation} \label{non-degenerate}
\dim(\overline f_{\rm aff}(\Delta_S))=\dim(\Delta_S) \quad \text{and} \quad \dim(G(S))=\dim(S).
\end{equation}
Obviously, the second condition in \eqref{non-degenerate} does not depend on the choice of $G$. We define $\Sk_{\rm nd}(\Xfrak')$ as the union of all non-degenerate canonical faces with respect to $f$.

\begin{prop} \label{support and non-degenerate simplices}
\walter{Let $\metr$ be a continuous semipositive toric metric  on the ample line bundle $L$ of $A$. Using the above notation,  the support of the Monge--Amp\`ere measure $c_1(f^*L,f^*\metr)^{\wedge d}$ is contained in  $\Sk_{\rm nd}(\Xfrak')$. This applies in particular to the canonical metric $\metr_L$ of the ample line bundle $L$ and then the above support agrees with $\Sk_{\rm nd}(\Xfrak')$.}	
\end{prop}

\begin{proof}
It follows from the  proof above and especially from the degree formula in Proposition \ref{degree formula in vertex} that the support of the Monge--Amp\`ere measure $c_1(f^*L,f^*\metr)^{\wedge d}$ is contained in $\Sk_{\rm nd}(\Xfrak')$. 
If $\metr$ is the canonical metric $\metr_L$ of $L$, then the restriction of $c_1(f^*L,f^*\metr_L)^{\wedge d}$ to the relative interior of a canonical face $\Delta_S$ of $\Sk(\Xfrak')$ is a multiple of the Lebesgue measure on $\relint(\Delta_S)$ as $f_{\metr_L} \circ f_{\rm aff}$ is a quadratic function on $\relint(\Delta_S)$. 
It follows from the positive definiteness of the bilinear form associated to the ample line bundle $L$ that this multiple is non-zero if and only if $\Delta_S$ is non-degenerate with respect to $f$. We conclude in this case that the support of $c_1(f^*L,f^*\metr_L)^{\wedge d}$ agrees with $\Sk_{\rm nd}(\Xfrak')$.
\end{proof} 

\begin{rem} \label{canonical subset}
We note that Theorem \ref{toric MA on X'} also yields a formula  for the Monge--Amp\`ere measure of the toric metric $\metr$ restricted  to $L|_X$ by using the projection formula 
\begin{equation} \label{projection formula}
c_1(L|_X,\metr)^{\wedge d}=f_*(c_1(f^*L,f^*\metr)^{\wedge d}).
\end{equation}
We will show in the next section that $\Xan$ has a smallest subset $S_X$ containing the supports of all these canonical measures and that $S_X$ has a canonical piecewise $\qgamma$-linear structure.
\end{rem}

\section{The canonical subset} \label{section cansubset}

As in the previous section, we consider a closed $d$-dimensional subvariety $X$ of an abelian variety $A$ over an algebraically closed non-archimedean field $K$ with non-trivial value group $\Gamma$. We will show that the supports of canonical measures on $\Xan$ give rise to a canonical subset $S_X$ of $\Xan$ endowed with a canonical piecewise $\qgamma$-linear structure.

We will start with the definition of the canonical subset of $\Xan$. Then we will recall  $\qgamma$-skeletons introduced by Ducros which will be an important tool to proof our main results at the end. 

Let $L$ be a rigidified ample line bundle on $A$ and let $\metr_L$ be the canonical metric of $L$.

\begin{definition} \label{def canonical subset}
The support of the Radon measure $c_1(L|_X,\metr_L)^{\wedge d}$ is called the \emph{canonical subset of $\Xan$} and will be denoted by $S_X$.
\end{definition}

\begin{rem} \label{canonical subset and polystable alteration}
We have seen in Remark \ref{existence of polystable alteration} that there is a Mumford model $\Afrak_0$ associated to a $\trop(\Lambda)$-periodic $(\Z,\Gamma)$-polytopal decomposition $\Ccal_0$ such that for the closure $\Xfrak$ of $X$ in $\Afrak_0$, we have a strictly polystable alteration $\varphi_0 \colon \Xfrak' \to \Xfrak$. Let $f\colon \Xfrak_\eta' \to \Xan$ be the generic fiber of $\varphi_0$. By Remark \ref{support and non-degenerate simplices}, the 	 support of $c_1(f^*L, f^*\metr_L)^{\wedge d}$ is equal to $\Sk_{\rm nd}(\Xfrak')$ and hence the projection formula \eqref{projection formula} proves
\begin{equation} \label{canonical support and non-degenerate}
S_X = f(\Sk_{\rm nd}(\Xfrak')).
\end{equation}
\end{rem}

\begin{prop} \label{properties of the canonical subset}
	The canonical subset $S_X$ does not depend on the choice of the ample line bundle $L$. Moreover, for any continuous semipositive metric $\metr$ on $\Lan$, the support of the Radon measure $c_1(L|_X,\metr)^{\wedge d}$ is contained in $S_X$.
\end{prop}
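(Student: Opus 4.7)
The plan is to use the description $S_X = f(S(\Xfrak')_{\rm nd})$ from Remark~\ref{canonical subset and polystable alteration}, where $f\colon \Xfrak_\eta'\to \Xan$ is the generic fiber of a strictly polystable alteration $\varphi_0\colon \Xfrak'\to \Xfrak$ as in \ref{setup}.

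For the first claim, I would observe that the non-degeneracy condition \eqref{non-degenerate} defining $S(\Xfrak')_{\rm nd}$ involves only the linearization $\overline{f}_{\rm aff}$ of the alteration and the morphism $G\colon \overline S\to \Bfrak_s$ to the formal abelian scheme. Both arise intrinsically from the Raynaud uniformization of $A$ and the chosen alteration $\varphi_0$, and do not depend on any choice of line bundle on $A$. Consequently $S(\Xfrak')_{\rm nd}$, and therefore $S_X$, is independent of the ample line bundle $L$.

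For the second claim, let $\metr$ be a continuous semipositive metric on $\Lan$. By the projection formula \eqref{projection formula} it suffices to show that the support of $c_1(f^*L, f^*\metr)^{\wedge d}$ on $\Xfrak_\eta'$ is contained in $S(\Xfrak')_{\rm nd}$. I would approximate $\metr$ uniformly by nef model metrics, pull them back via $f$, and then further refine by polytopal subdivisions of the skeleton $S(\Xfrak')$ as in \ref{refinement of polystable}; cofinality of such refinements among models dominating $\Xfrak'$ ensures this is possible. For the resulting approximating model metrics, the Monge--Amp\`ere measure is a finite sum of Dirac measures at vertices of the polytopal subdivision inside $S(\Xfrak')$, weighted by degrees of the corresponding irreducible components of the special fiber. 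For a vertex lying in the relative interior of a degenerate canonical face $\Delta_S$, the associated intersection number vanishes: in case (a) of \eqref{non-degenerate} because the image in the model of $A$ has lower dimension than expected, forcing the intersection with the pullback of the nef model line bundle to be zero; in case (b) because the composition with the morphism $q$ to the formal abelian scheme $\Bfrak$ has lower-dimensional image, so that the intersection with any nef pullback from $B$ vanishes. Taking the weak limit of measures, which preserves support inclusions in closed subsets, then yields $\supp(c_1(f^*L, f^*\metr)^{\wedge d})\subset S(\Xfrak')_{\rm nd}$.

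The main obstacle lies in the vanishing argument for general nef model metrics, rather than for those of toric type covered by Proposition~\ref{degree formula in vertex}. This requires a uniform intersection-theoretic analysis across the approximating sequence, exploiting the structural properties of degenerate faces intrinsic to the alteration and the Raynaud uniformization, in order to replace the explicit toric degree computation by a purely geometric vanishing.
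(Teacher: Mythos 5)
Your argument for the first claim is correct and matches the paper's: the non-degeneracy condition \eqref{non-degenerate} only involves $\overline{f}_{\rm aff}$ and $G$, which depend on the alteration and the Raynaud data but not on $L$, so \eqref{canonical support and non-degenerate} gives independence of $L$.

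For the second claim, the paper's proof is a single citation: Remark~\ref{support and non-degenerate simplices} gives $\supp\,c_1(f^*L,f^*\metr)^{\wedge d} \subset S(\Xfrak')_{\rm nd}$, then the projection formula \eqref{projection formula} finishes. Note that Remark~\ref{support and non-degenerate simplices} is stated explicitly \emph{in the setting of Theorem~\ref{toric MA on X'}}, i.e.\ for continuous semipositive \emph{toric} metrics; the reason it works is that by Proposition~\ref{transversal approximation of semipositive toric metrics} toric semipositive metrics are uniform limits of model metrics coming from Mumford models, which pull back along $f$ to polytopal refinements $\Xfrak''$ of $\Xfrak'$ in the sense of \ref{refinement of polystable}. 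Your attempt to handle arbitrary semipositive metrics breaks at an earlier point than the obstacle you flag. The step ``cofinality of such refinements among models dominating $\Xfrak'$ ensures this is possible'' is false: the polytopal refinements from \ref{refinement of polystable} are very special (they all induce the same skeleton and retraction), whereas general admissible formal models dominating $\Xfrak'$ — e.g.\ obtained by admissible formal blow-ups in finitely generated open ideals concentrated at a closed point of the special fiber — have Shilov points lying off $S(\Xfrak')$ and are \emph{not} dominated by any polytopal refinement. So after pulling back an arbitrary nef model metric along $f$, you cannot place the Dirac masses at vertices of a subdivision $\Dcal$ inside the skeleton, and the vanishing analysis you go on to sketch never gets started. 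You are also right that, even granting the reduction, the degree vanishing on degenerate faces is only established in Proposition~\ref{degree formula in vertex} for the Mumford-type line bundles $\Lfrak(h)$ and does not automatically extend to arbitrary nef models; but that is the second gap, not the first. To follow the paper you should restrict to toric $\metr$ (as the paper's setting clearly intends), apply Proposition~\ref{transversal approximation of semipositive toric metrics} to get the correct approximating model metrics, and then quote Remark~\ref{support and non-degenerate simplices} directly.
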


\begin{proof}
	Since $\Sk_{\rm nd}(\Xfrak')$ does not depend on the ample line bundle $L$, the first claim follows from \eqref{canonical support and non-degenerate}. By Remark \ref{support and non-degenerate simplices}, the support of $c_1(f^*L, f^*\metr)^{\wedge d}$ is contained in $\Sk(\Xfrak')_{\rm nd}$ and hence the second claim follows from the projection formula \eqref{projection formula}.
\end{proof}

\begin{prop} \label{functoriality of canonical subsets}
If $\psi\colon A \to B$ is a finite homomorphism of abelian varieties over $K$, then we have  $S_{\psi(X)} = \psi(S_X)$.
\end{prop}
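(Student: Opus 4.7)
The plan is to reduce this to the projection formula for Chambert--Loir measures under the finite surjective morphism $\psi|_X \colon X \to \psi(X)$, combined with the functoriality of canonical metrics under homomorphisms of abelian varieties. Since $\psi$ is a finite homomorphism, the restriction $\psi|_X$ is finite and surjective onto $\psi(X)$; in particular $\dim(\psi(X)) = \dim(X) = d$, and the analytification $\psi\colon \Xan \to \psi(X)^{\an}$ is continuous and proper.

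First, I would pick a rigidified ample line bundle $L$ on $B$. Then $\psi^*L$ is a rigidified ample line bundle on $A$, because finite pullback preserves ampleness. Since $\psi$ is a homomorphism of abelian varieties, it commutes with the multiplication-by-$n$ endomorphisms used in Tate's limit procedure for the canonical metric and it respects the identity sections serving as rigidifications. Therefore
$$\psi^*\metr_L = \metr_{\psi^*L}$$
as continuous semipositive metrics. In particular, $\metr_{\psi^*L}|_X = \psi^*(\metr_L|_{\psi(X)})$.

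Next, I would apply the projection formula for non-archimedean Monge--Amp\`ere measures to $\psi|_X$, which has degree $\delta \coloneqq [K(X) : K(\psi(X))]$, obtaining
$$\psi_* c_1(\psi^*L|_X, \metr_{\psi^*L})^{\wedge d} = \delta \cdot c_1(L|_{\psi(X)}, \metr_L)^{\wedge d}$$
as positive Radon measures on $\psi(X)^{\an}$. By Definition \ref{def canonical subset}, applied to $X \subset A$ with the ample line bundle $\psi^*L$ and to $\psi(X) \subset B$ with $L$, the support of the measure on $\Xan$ being pushed forward on the left is $S_X$, while the support of the measure on the right is $S_{\psi(X)}$.

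Finally, I would conclude with a support computation. By Theorem \ref{intro canonical subset and tropicalization} (or directly from \eqref{canonical support and non-degenerate}), the canonical subset $S_X$ is compact. For a continuous map and a positive Radon measure with compact support, the support of the pushforward is the image of the support; since scaling by $\delta > 0$ preserves supports, this yields $\psi(S_X) = S_{\psi(X)}$. The only non-routine inputs are the functoriality of canonical metrics under $\psi$ and the projection formula for Chambert--Loir measures; both are standard, and I expect no real obstacle beyond invoking them carefully.
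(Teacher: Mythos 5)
Your proof is correct and matches the paper's own argument exactly: pick $L$ ample on $B$, use that $\psi^*\metr_L = \metr_{\psi^*L}$, apply the projection formula for Monge--Amp\`ere measures, and compare supports. (Your bookkeeping is in fact slightly more careful than the paper's printed text, which appears to misstate $L$ as living on $A$ and writes $L|_X$ where $L|_{\psi(X)}$ is meant.)
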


\begin{proof}
Let $L$ be a rigidified ample line bundle on $A$. Note that $\psi^*(L)$ is ample and that $\psi^*\metr_L$ is the canonical metric of $\psi^*(L)$. Then the claim follows from the projection formula 
$$\psi_{X,*}(c_1(\psi_X^*L,\psi_X^*\metr_L))^{\wedge d}= \deg(\psi_X) \cdot c_1(L|_X,\metr_L)^{\wedge d}$$
for Monge--Amp\`ere measures where $\psi_X \colon X \to \psi(X)$ is given by $\psi$.
\end{proof}

We will show below that $S_X$ is a \emph{$(\Q,\Gamma)$-skeleton of $\Xan$} as defined by Ducros  \cite[4.6]{ducros12:squelettes_modeles}. We will now briefly recall these notions for $\Xan$, but they can be used more generally for any topologically separated strictly analytic space (see \cite[\S 4]{ducros12:squelettes_modeles}).

\begin{art} \label{c-skeletons}
For any strictly analytic domain $Y$ in $\Xan$ and an $m$-tuple $g=(g_1,\dots,g_m)$ of invertible analytic functions  on $Y$, we define the tropicalization map
$$\trop_g \colon Y \longrightarrow \R^m, \quad y \longmapsto (-\log|g_1(y)|, \dots, - \log|g_m(y)|).$$
A compact subset $P$ of $\Xan$ consisting of Abhyankar points is called an \emph{analytic $(\Q,\Gamma)$-polytope} if there is a strictly analytic domain $Y$ containing $P$ and $g_1,\dots,g_m \in \Ocal(Y)^\times$ such that $\trop_g$ induces a homeomorphism of $P$ onto a finite union of $\qgamma$-polytopes in $\R^m$ with the following properties for the 
induced piecewise $\qgamma$-linear structure on $P$: For any  
 strictly subdomain $Z$ of $\Xan$ and any $h \in \Ocal(Z)^\times$, we require that $P \cap Z$ is a piecewise $\qgamma$-linear subspace of $P$ and that the restriction of $-\log|h|$ to $P \cap Z$ is piecewise $\qgamma$-linear.
 
 A (locally) closed subset $S$ of $\Xan$ is called a \emph{$\qgamma$-skeleton} if the analytic $\qgamma$-polytopes contained in $S$ form an atlas for a piecewise $\qgamma$-linear structure on $S$. 
 It follows from \cite[4.1.2]{ducros12:squelettes_modeles} that the piecewise $\qgamma$-linear structure on $S$ is completely determined by the underlying  set $S$ and the analytic structure of $\Xan$. 
\end{art}

\walter{We will use the following criterion of Ducros.}

\begin{lemma} \label{Ducros criterion}
Let $Y$ be an integral strictly affinoid space over $K$ and let $S$ be a compact subset of $Y$ consisting of Abhyankar points. Then $S$ is an analytic $\qgamma$-polytope of $Y$ if  the following properties hold:	
\begin{itemize}
	\item[(i)] $\trop_g(S)$ is a piecewise $\qgamma$-linear subspace of $\R^m$ for any $g_1,\dots,g_m \in \Ocal(Y)\setminus \{0\}$;
	\item[(ii)] there are  $g_1,\dots,g_m \in \Ocal(Y)\setminus \{0\}$ such that the restriction of $\trop_g$ to $S$ is injective.
\end{itemize}	
\end{lemma}

\walter{Since $S$ consists of Abhyankar points, the analytic functions $g_j$ are nowhere zero on $S$ and hence $\trop_g(S)\subset \R^m$.}
\begin{proof}
	This is  criterion 2) in \cite[Lemma 4.4]{ducros12:squelettes_modeles}.
\end{proof}

\begin{lemma} \label{union of c-polytopes}
Let $Y$ be an integral strictly affinoid space over $K$. Then a finite union of analytic $\qgamma$-polytopes of $Y$ is an analytic $\qgamma$-polytope of $Y$.
\end{lemma}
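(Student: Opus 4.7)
The plan is to argue by induction on the number of polytopes, reducing to the case of the union $P = P_1 \cup P_2$ of two analytic $\qgamma$-polytopes. The goal is to produce a single strictly analytic subdomain $Y^\star\subset Y$ containing $P$ together with a finite list of invertible functions on $Y^\star$ whose combined tropicalization realizes $P$ as an analytic $\qgamma$-polytope in the sense of \ref{c-skeletons}.

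Fix presenting data $(Y_i, g_i)$ with $g_i=(g_{i,1},\dots,g_{i,m_i})\in\Ocal(Y_i)^\times$ for each $P_i$. The first step is to replace the domains $Y_i$ by a common strictly analytic subdomain $Y^\star\subset Y$ containing $P$ on which all the $g_{i,j}$ extend to units. While the naive intersection $Y_1\cap Y_2$ need not contain $P_1$, the integrality of $Y$ allows such a $Y^\star$ to be constructed: representing each $g_{i,j}$ as a ratio of global analytic functions in the fraction field of $\Ocal(Y)$ and localizing away from the common zero locus yields a strictly analytic subdomain carrying the extensions, and suitable choice of the representatives ensures that $P$ is contained in it. With this common setup in place, the combined tropicalization $\trop_{(g_1,g_2)}\colon Y^\star\to\R^{m_1+m_2}$ restricts on each $P_i$ to a homeomorphism onto the graph over $\trop_{g_i}(P_i)$ of the continuous map $\trop_{g_{3-i}}|_{P_i}$. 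By the piecewise $\qgamma$-linearity condition in the definition of analytic $\qgamma$-polytope applied to $P_i$, the strictly analytic subdomain $Y^\star\supset P_i$ and the units $g_{3-i,j}$, each component of $\trop_{g_{3-i}}|_{P_i}$ is piecewise $\qgamma$-linear on $P_i$, so this graph is a finite union of $\qgamma$-polytopes; hence so is $\trop_{(g_1,g_2)}(P)$, and the piecewise $\qgamma$-linearity conditions from \ref{c-skeletons} for restrictions of arbitrary units transfer from $P_1,P_2$ to $P$ because they are local on $P$.

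The main obstacle is injectivity of the combined map on all of $P$: a point $p\in P_1\setminus P_2$ might share its combined image with a distinct $q\in P_2\setminus P_1$. To remedy this, I would enlarge the list of invertible functions by finitely many auxiliary units $h_1,\dots,h_s\in\Ocal(Y^{\star\star})^\times$ on a further subdomain $Y^{\star\star}\subset Y^\star$ still containing $P$, chosen so that the extended tropicalization $\trop_{(g_1,g_2,h_1,\dots,h_s)}$ is injective on $P$. Each $h_j$ is supplied by the standard separation property for the integral strictly affinoid space $Y$: for any two distinct Berkovich points of $Y$ there exists an element of $\Ocal(Y)$ with different absolute values at them, and by the same fraction-field procedure combined with an ultrametric adjustment by a constant in $K^\times$ this yields a unit on a strictly analytic subdomain of $Y^\star$ containing $P$ that strictly separates the pair in a neighbourhood. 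Since the non-injective locus in $P\times P$ is relatively compact off the diagonal and each separating function provides an open condition, a compactness argument shows that finitely many $h_j$ suffice. As a continuous bijection from the compact space $P$ to the Hausdorff space $\R^{m_1+m_2+s}$, the resulting map is a homeomorphism onto a finite union of $\qgamma$-polytopes, completing the proof.
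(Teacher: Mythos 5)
Your overall strategy (induction on the number of polytopes, then a compactness argument to achieve injectivity of a combined tropicalization on $P_1 \cup P_2$) matches the paper's in outline, and the injectivity step via separation of Berkovich points plus compactness is essentially the same as in the paper. However, there is a genuine gap at the first and crucial step: you assert that the presenting units $g_{i,j}\in\Ocal(Y_i)^\times$ can be ``represented as ratios of global analytic functions in the fraction field of $\Ocal(Y)$'' and thereby extended to units on a common subdomain $Y^\star\supset P_1\cup P_2$. This is not true in general. For a strictly analytic subdomain $Y_i$ of the affinoid $Y$, the ring $\Ocal(Y_i)$ is far larger than any localization of $\Ocal(Y)$; an arbitrary unit on $Y_i$ need not extend as a meromorphic (let alone invertible) function anywhere outside $Y_i$. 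So the common domain $Y^\star$ on which all the $g_{i,j}$ become units simply may not exist, and the rest of your construction has nothing to operate on.

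What makes the paper's proof work is that it does not try to extend the original presenting units at all. It instead invokes Ducros's characterization of analytic $\qgamma$-polytopes (criterion 2) of \cite[Lemma 4.4]{ducros12:squelettes_modeles}): a compact set of Abhyankar points $S\subset Y$ is an analytic $\qgamma$-polytope if and only if (i) $\trop_g(S)$ is piecewise $\qgamma$-linear for every tuple of \emph{nonzero global} functions $g_j\in\Ocal(Y)\setminus\{0\}$, and (ii) some such tuple makes $\trop_g$ injective on $S$. Since the points of $S$ are Abhyankar, nonzero global functions do not vanish on $S$, so one may work throughout with $\Ocal(Y)\setminus\{0\}$ and never needs to pass to units on subdomains or reconcile different presenting data. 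This criterion is exactly the non-obvious input your argument is implicitly trying to re-derive via the ``extension'' step, and without it (or some equivalent replacement) the proof does not close.
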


\begin{proof} 
By induction, it is enough to show for analytic $\qgamma$-polytopes $P$ and $Q$ of $Y$ that $P \cup Q$ is an analytic $\qgamma$-polytope of $Y$. 
\walter{We will use the criterion of Ducros from Lemma \ref{Ducros criterion}.}
Let $g_1,\dots,g_m \in \Ocal(Y)\setminus \{0\}$. Since $\trop_g(P)$ and $\trop_g(Q)$ are finite unions of $\qgamma$-polytopes in $\R^m$, it follows that the same is true for $\trop_g(P) \cup \trop_g(Q)$ proving (i) for $P \cup Q$. To prove (ii), we include in the list $g_1,\dots,g_m$ the functions appearing in (ii) for the analytic $\qgamma$-polytopes $P$ and $Q$. Then $\trop_g$ restricts to an injective function on $P$ and also to an injective function on $Q$. We will enlarge the list to get an injective function on $P \cup Q$. For any $x \in P$, there is at most one $y \in Q$ such that $\trop_g(x)=\trop_g(y)$. Since $Y$ is affinoid, there is an analytic function $h$ on $Y$ with $|h(x)|\neq |h(y)|$. Including $h$ in the list, we conclude that $x$ is the only point in $P \cup Q$ mapping to $\trop_g(x)$. By continuity, the same holds for all $x'$ in a neighbourhood of $x$ in $P$. By compactness of $P$, we conclude that we may add a finite number of non-zero analytic functions of $Y$ to the list $g_1,\dots,g_m$ to ensure that $\trop_g$ is injective on $P \cup Q$. This proves (ii) and hence $P \cup Q$ is an analytic $\qgamma$-polytope.
\end{proof} 

We will frequently use the following notions introduced in Section \ref{section tormet}: 
We have the uniformization $A^\an=E^\an/\Lambda$ from the Raynaud extension 
$$0 \longrightarrow \Tan \longrightarrow E^\an \stackrel{q}{\longrightarrow} B^\an \longrightarrow 0,$$
where $B^\an$ is the generic fiber of a  formal abelian scheme $\Bfrak$ over $\kcirc$. Let $N$ be the cocharacter lattice of the torus $T$ and let $\tropbar\colon A^\an \to N_\R/\trop(\Lambda)$ the canonical tropicalization.

\begin{thm} \label{piecewise linear structure of canonical subset}
The canonical subset $S_X$ of $\Xan$ is a $(\Q,\Gamma)$-skeleton  of $\Xan$ for any closed subvariety $X$ of $A$. For any strictly polystable alteration $\varphi_0\colon \Xfrak' \to \Xfrak$ as in Remark \ref{canonical subset and polystable alteration} with generic fiber $f\colon \Xfrak_\eta'\to \Xan$ and any canonical face $\Delta_S$ of $\Sk(\Xfrak')$ which is non-degenerate with respect to $f$, the morphism $f$ induces a piecewise $(\Q,\Gamma)$-linear isomorphism  $\Delta_S \to f(\Delta_S)$. 
\end{thm}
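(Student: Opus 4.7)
The plan is to work locally on $A^\an$ using Raynaud uniformization in order to realize the canonical tropicalization $\tropbar$ as a genuine tropicalization map by invertible analytic functions on small affinoids, and then to transfer the piecewise linear structure from $\Delta_S$ to $f(\Delta_S)$ through this identification.

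First I would fix a non-degenerate canonical face $\Delta_S$ and prove that $f|_{\Delta_S}$ is a homeomorphism onto $f(\Delta_S)$. Since the retraction $\tau$ is the identity on $S(\Xfrak')$, the defining relation $\overline f_{\rm aff} \circ \tau = \tropbar \circ f$ from \ref{linearization of f} restricts on $\Delta_S$ to $\overline f_{\rm aff}|_{\Delta_S} = \tropbar \circ f|_{\Delta_S}$. Non-degeneracy gives $\dim(\overline f_{\rm aff}(\Delta_S)) = \dim(\Delta_S)$, so the $(\Z,\Gamma)$-affine map $\overline f_{\rm aff}$ restricts to an affine isomorphism of polytopes, hence is injective on $\Delta_S$. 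This forces $f|_{\Delta_S}$ to be injective and, by compactness, to be a homeomorphism onto $f(\Delta_S)$.

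Next I would upgrade this to a piecewise $(\Q,\Gamma)$-linear isomorphism and identify $f(\Delta_S)$ as an analytic $(\Q,\Gamma)$-polytope of $\Xan$. Around any point $y \in f(\Delta_S)$ one can choose a strictly affinoid neighborhood $V \subset A^\an$ small enough that $p\colon E^\an \to A^\an$ admits a section $V \hookrightarrow E^\an$ landing in an open subset on which the $T$-torsor $q\colon E^\an \to B^\an$ trivializes. The characters $\chi_1,\dots,\chi_n$ of $T$ then pull back to invertible analytic functions $g_1,\dots,g_n \in \Ocal(V)^\times$ whose tropicalization $\trop_g$ satisfies $\tropbar|_V = \pi \circ \trop_g$, with $\pi\colon N_\R \to N_\R/\trop(\Lambda)$ the quotient. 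Covering $f(\Delta_S)$ by finitely many such affinoids $V_j$, the intersection $V_j \cap f(\Delta_S)$ is sent homeomorphically by $\trop_g$ onto a local lift of $\overline f_{\rm aff}(\Delta_S)$, which is a $(\Q,\Gamma)$-polytope. Because $\overline f_{\rm aff}|_{\Delta_S}$ is $(\Z,\Gamma)$-affine and the polysimplex structure of $\Delta_S$ comes from the building-block tropicalization of \ref{piecewise linear structure}, the resulting identification $V_j \cap \Delta_S \to V_j \cap f(\Delta_S)$ is piecewise $(\Q,\Gamma)$-linear, yielding the claimed isomorphism. Abhyankarness of the image points is inherited from that of points in $S(\Xfrak')$ using that $f$ is finite over a dense open containing them.

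The first claim then falls out by assembly. By \eqref{canonical support and non-degenerate} we have $S_X = \bigcup f(\Delta_S)$ as a finite union over non-degenerate canonical faces, so in each affinoid chart $V_j$ the intersection $V_j \cap S_X$ is a finite union of analytic $(\Q,\Gamma)$-polytopes; Lemma \ref{union of c-polytopes} packages this into a single analytic $(\Q,\Gamma)$-polytope, and these charts form an atlas exhibiting $S_X$ as a $(\Q,\Gamma)$-skeleton in the sense of \ref{c-skeletons}. The main obstacle will be the local realization of $\tropbar$ as a genuine tropicalization by invertible analytic functions, which requires simultaneously trivializing $p$ and $q$ on a sufficiently small affinoid and verifying independence of these choices; any two such local lifts differ by translation by an element of $\trop(\Lambda)$, which is an automorphism of the piecewise $(\Q,\Gamma)$-linear structure on $N_\R$, so the resulting atlas is canonical.
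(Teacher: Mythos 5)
Your outline captures the paper's overall strategy (local realization of $\tropbar$ by invertible functions, union over non-degenerate faces, assembly via Lemma \ref{union of c-polytopes}), but it skips the crux of the argument: verifying the second clause of Ducros's definition of an analytic $(\Q,\Gamma)$-polytope, or equivalently condition (i) of his criterion. You establish (correctly, in essence) that the $h$-coordinates give a tropicalization map that is injective on $f(\Delta_S)$, so the image is homeomorphic to a $(\Q,\Gamma)$-polytope. That is only the ``easy half''. Being an analytic $(\Q,\Gamma)$-polytope additionally requires that for \emph{arbitrary} non-zero analytic functions $g_1,\dots,g_m$ on an integral affinoid neighbourhood, the tropicalization $\trop_g(f(\Delta_S)\cap Y)$ is a finite union of $\qgamma$-polytopes, with compatible piecewise linear structure. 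Your proposal never touches arbitrary analytic functions; it only uses the $T$-character coordinates. This is precisely where the paper does the real work: it passes to $g_j' = g_j\circ f$ on $f^{-1}(Z)$, invokes that $S(\Xfrak')$ is a $\qgamma$-skeleton of $\Xfrak_\eta'$ (so $\Delta_S\cap Z'$ is a $\qgamma$-skeleton by \cite[4.6.2, 4.6.3]{ducros12:squelettes_modeles}), and then uses \cite[4.6.4]{ducros12:squelettes_modeles} to conclude $\trop_{g'}$ is piecewise $\qgamma$-linear on it. Without this, you have no handle on the interaction of $f(\Delta_S)$ with general analytic functions, which is exactly what distinguishes a skeleton from a merely topologically-polytopal subset.

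Two secondary omissions: first, to apply Lemma \ref{union of c-polytopes} one needs an \emph{integral} strictly affinoid domain $Y$ containing the point; the paper obtains this by noting that a point of $S_X$ is Abhyankar and hence lies in $X_{\rm reg}^{\rm an}$, so $\Xan$ is G-locally integral there. You do not address why the affinoids $V_j$ (or their intersections with $\Xan$) can be taken integral. Second, the claimed piecewise $(\Q,\Gamma)$-linear isomorphism $\Delta_S\to f(\Delta_S)$ is circular as stated: the PL structure on $f(\Delta_S)$ is by definition the one induced from its status as an analytic $\qgamma$-polytope, so the isomorphism cannot be asserted before that structure is fully established. Once the skeleton property is proven as in the paper, the identity $\trop_h\circ f|_{\Delta_S}=$ (lift of) $\overline f_{\rm aff}|_{\Delta_S}$ does indeed deliver the isomorphism, so this second point is repaired by filling the first gap.
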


\begin{proof}
	By definition, the support of a Radon measure is closed, so $S_X$ is closed in $\Xan$ and hence compact.
	We choose  a strictly polystable alteration $\varphi_0\colon \Xfrak' \to \Xfrak$ as in Remark \ref{canonical subset and polystable alteration}. Let $f\colon \Xfrak_\eta'\to \Xan$ be the generic fiber. 
	The skeleton $\Sk(\Xfrak')$ is a $(\Q,\Gamma)$-skeleton of $\Xfrak_\eta'$ \cite[Exemple 4.8]{ducros12:squelettes_modeles} and hence it consists of Abhyankar points. Since Abhyankar points cannot be contained in a lower dimensional closed analytic subset, we conclude that  $\Sk(\Xfrak')$ is contained in the finite part of the generically finite morphism $f$. By \cite[1.4.14]{ducros18:families}, it follows that $f(\Sk(\Xfrak'))$ consists of Abhyankar points. In particular, this holds for $S_X$.

	To show that $S_X$ is a   $(\Q,\Gamma)$-skeleton  of $\Xan$, we may argue $\rm G$-locally at any $x \in \Xan$ with respect to the Grothendieck topology induced by the strictly analytic domains of $\Xan$, see \cite[Proposition 4.9]{ducros12:squelettes_modeles}. 
	If $x \not \in S_X$, then $S_X$ is empty in a neighbourhood of $x$ and the claim holds. So we may assume that $x \in S_X$. 
	We recall from \ref{setup} that $\Xfrak$ is the closure of $X$ in the Mumford model $\Afrak_0$ associated to a $\trop(\Lambda)$-periodic $(\Z,\Gamma)$-polytopal decomposition $\Ccal_0$ of $N_\R$. We pick a lift $\tilde x \in E^\an$ of $x$ with respect to the quotient morphism $p \colon E^\an \to A^\an = E^\an/\Lambda$. There is a unique $\Delta \in \Ccal_0$ such that $\trop(\tilde x) \in \relint(\Delta)$. We fix torus coordinates $x_1, \dots, x_n$ of $T$ giving $N_\R \cong \R^n$. It is explained in \cite[4.2]{gubler-compositio} that there is a formal affine open covering of $\Bfrak$ such that, for the generic fiber $W$ of any member of the covering, the morphism $q\colon E^\an \to B^\an$ splits over $W$ and such that for $y \in \trop^{-1}(W)$, the canonical tropicalization is given by 
	\begin{equation} \label{h-representation of trop}
	\trop(y)=\trop_h(y)=(-\log|h_1(y)|, \dots, - \log|h_n(y)|)
	\end{equation}
	where $h_j \coloneqq p_1^*(x_j)$ for the first projection $p_1$ with respect to 
	the splitting $q^{-1}(W) \cong \Tan \times W$. We pick such a $W$ with $q(\tilde x)\in W$. 
	Since $\trop(\tilde x) \in \relint(\Delta)$, we deduce that $U_{\Delta,W}\coloneqq \trop^{-1}(\Delta) \cap q^{-1}(W) \cong U_\Delta \times W$ is a strictly affinoid domain of $E^\an$ containing $\tilde x$, where $U_\Delta$ is the polytopal domain of $\Tan$ given by the preimage of $\Delta$ with respect to the classical tropicalization map $\Tan \to N_\R \cong \R^n$.

	The quotient $A^\an=E^\an/\Lambda$ and the construction of the Mumford model $\Afrak_0$ identifies $q^{-1}(W)/\Lambda$ with the generic fiber $V_{\Delta,W}$ of a formal affine open subset $\Vfrak_{\Delta,W}$ of $\Afrak_0$ such that $x \in V_{\Delta,W}$. We will view $h_1,\dots, h_n$ as invertible analytic functions on $V_{\Delta,W}$. 
	It follows from \eqref{h-representation of trop} and the definitions that $\trop_h \circ f|_{\Sk(\Xfrak')}$ is a lift of $\overline{f}_{\rm aff}|_{\Sk(\Xfrak')}$ from $\tropbar(A^\an)=N_\R/\trop(\Lambda)$ to $N_\R \cong \R^n$. If $\Delta_S$ is a canonical face of $\Sk(\Xfrak')$ which is non-degenerate with respect to $f$, then we conclude that the restriction of $\trop_h$ to $f(\Delta_S)$ is injective.

	Since $x \in S_X$, we know that $x$ is an Abhyankar point and hence $x \in X_{\rm reg}^\an$. We conclude that $\Xan$ is $\rm G$-locally integral at $x$ and hence there is an integral strictly affinoid domain $Y$ of $\Xan$ with $x \in Y \subset V_{\Delta,W}$.  
	By \cite[4.6.1]{ducros12:squelettes_modeles}, it is enough to prove that $S_X \cap Y$ is a $(\Q,\Gamma)$-analytic polytope of $Y$. 
	Note that $S_X$ is the union of $f(\Delta_S)$ with $S$ ranging over all canonical faces $\Delta_S$ of $\Sk(\Xfrak')$ which are non-degenerate with respect to $f$. Therefore Lemma \ref{union of c-polytopes} yields that it is enough to show that $f(\Delta_S)\cap Y$ is an analytic $(\Q,\Gamma)$-polytope of $Y$ for any canonical face $\Delta_S$ which is non-degenerate with respect to $f$. 
	To show this, we will use \walter{the criterion of Ducros recalled in Lemma \ref{Ducros criterion}.}
	Let $g_1,\dots, g_m$ be non-zero analytic functions on $Y$.  Since $\Delta_S$ consists of Abhyankar points, the restriction of any $g_j$ to $f(\Delta_S)$ is invertible and hence there is a strictly affinoid neighbourhood $Z$ of $f(\Delta_S)\cap Y$ in $Y$ such that every $g_j$ is an invertible analytic function on $Z$. 
	Note that $Z' \coloneqq f^{-1}(Z)$ is a strictly analytic domain of $\Xfrak_\eta'$ and we have $\Delta_S \cap f^{-1}(Y)= \Delta_S \cap Z'$.
	The analytic function $g_j'\coloneqq g_j \circ f$ is invertible on $Z'$ for $j=1,\dots,m$. We have 
	\begin{equation} \label{trop composition}
	\trop_g(f(\Delta_S)\cap Y)=\trop_{g'}(\Delta_S \cap f^{-1}(Y))=\trop_{g'}(\Delta_S \cap Z').
	\end{equation}
	Since $\Sk(\Xfrak')$ is a $(\Q,\Gamma)$-skeleton 
	and $Z'$ is a strictly analytic domain in $\Xfrak_\eta'$, it follows from \cite[4.6.2, 4.6.3]{ducros12:squelettes_modeles} that $\Delta_S \cap Z'$ is a $\qgamma$-skeleton in $Z'$. By \cite[4.6.4]{ducros12:squelettes_modeles}, the map $\trop_{g'}$ is piecewise $\qgamma$-linear on $\Delta_S \cap Z'$ and hence we deduce from \eqref{trop composition} that $\trop_g(f(\Delta_S)\cap Y)$ is a finite union of $\qgamma$-polytopes in $\R^m$. This proves (i) of the criterion. 
	
	Now we choose for $g_1,\dots,g_m$ the restrictions of the functions $h_1,\dots, h_n$ to $Y$. We have already seen that the restriction of $\trop_h$ to $f(\Delta_S)$ is injective. We conclude that the same is true for the restriction to the subset $f(\Delta_S)\cap Y$ which proves (ii) of the criterion.
	Then the criterion yields that  $f(\Delta_S)\cap Y$ is an analytic $\qgamma$-polytope of $Y$ proving that $S_X$ is a $\qgamma$-skeleton.

Since 
$\trop_h$ is injective on $f(\Delta_S)$ for any non-degenerate canonical face $\Delta_S$ of $\Sk(\Xfrak')$, we get an induced piecewise $\qgamma$-linear isomorphism $f(\Delta_S)\to \trop_h(f(\Delta_S))$. We have seen that $\trop_h \circ f|_{\Delta_S}$ is a lift of $\overline f_{\rm aff}|_{\Delta_S}$ and hence  a $\qgamma$-linear isomorphism of $\Delta_S$ onto $\trop_h(f(\Delta_S))$. Therefore $f$ induces a piecewise $\qgamma$-linear isomorphism $\Delta_S \to f(\Delta_S)$.
\end{proof}

By Theorem \ref{piecewise linear structure of canonical subset}, the set $S_X$ has a canonical piecewise $\qgamma$-linear structure.

\begin{cor} \label{canonical subset and canonical measure}
There is a polytopal $\qgamma$-decomposition $\Sigma$ of the canonical subset $S_X$ such that for any rigidified ample line bundle $L$ on $A$ with canonical metric $\metr_L$, we have 
$$c_1(L|_X,\metr_L)^{\wedge d}= \sum_{\sigma \in \Sigma} r_\sigma \mu_\sigma$$
where $\mu_\sigma$ is a fixed choice of a Lebesgue measure on the polytope $\sigma$ and where $r_\sigma \in \R_{\geq 0}$ with $r_\sigma >0$ for all maximal $\sigma$.
\end{cor}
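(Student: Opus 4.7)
The plan is to push down the Monge--Amp\`ere measure from a strictly polystable alteration using the projection formula \eqref{projection formula}, and assemble $\Sigma$ from the piecewise $\qgamma$-linear structure of Theorem \ref{piecewise linear structure of canonical subset}. First, I fix a strictly polystable alteration $\varphi_0\colon\Xfrak'\to\Xfrak$ as in \ref{setup} with generic fiber $f\colon\Xfrak_\eta'\to\Xan$, so that $S_X = f(S(\Xfrak')_{\rm nd})$ by Remark \ref{canonical subset and polystable alteration}. Theorem \ref{piecewise linear structure of canonical subset} says that each non-degenerate canonical face $\Delta_S\subset S(\Xfrak')$ maps under $f$ by a piecewise $\qgamma$-linear isomorphism onto the $\qgamma$-polytopal subset $f(\Delta_S)\subset S_X$, yielding a finite collection of $\qgamma$-polytopes in the $\qgamma$-skeleton $S_X$. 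Using this piecewise $\qgamma$-linear structure, I construct a polytopal $\qgamma$-decomposition $\Sigma$ of $S_X$ simultaneously refining all the $f(\Delta_S)$: every $f(\Delta_S)$ is a union of cells of $\Sigma$, and every cell of $\Sigma$ is contained in some $f(\Delta_S)$. Since non-degeneracy depends only on $f$, the decomposition $\Sigma$ depends only on $\varphi_0$ and not on any choice of ample line bundle.

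Given an arbitrary rigidified ample line bundle $L$ on $A$ with canonical metric $\metr_L$, Remark \ref{support and non-degenerate simplices} gives
\[
c_1(f^*L,f^*\metr_L)^{\wedge d}\bigr|_{\relint(\Delta_S)} \;=\; c_{\Delta_S}(L)\cdot\mu_{\Delta_S}\bigr|_{\relint(\Delta_S)}
\]
for a non-negative constant $c_{\Delta_S}(L)$ which is strictly positive precisely when $\Delta_S$ is non-degenerate with respect to $f$, where $\mu_{\Delta_S}$ is the intrinsic Lebesgue measure on $\Delta_S$ attached to the $\Z$-structure on $\L_{\Delta_S}$. Applying the projection formula \eqref{projection formula} cell-by-cell, for any Borel subset $\Omega$ of $\relint(\sigma)$ with $\sigma\in\Sigma$,
\[
c_1(L|_X,\metr_L)^{\wedge d}(\Omega) \;=\; \sum_{\Delta_S\text{ non-deg.}}c_{\Delta_S}(L)\,\mu_{\Delta_S}\bigl(f^{-1}(\Omega)\cap\relint(\Delta_S)\bigr).
\]
Since $f|_{\Delta_S}$ is a piecewise $\qgamma$-linear isomorphism onto $f(\Delta_S)$, the preimage $f^{-1}(\Omega)\cap\Delta_S$ has the same dimension as $\Omega$ exactly when $\sigma\subset f(\Delta_S)$ and $\dim\Delta_S=\dim\sigma$, contributing $J_{S,\sigma}\cdot\mu_\sigma(\Omega)$ for a positive rational Jacobian $J_{S,\sigma}$; in all other cases the preimage is strictly lower-dimensional and contributes zero. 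This yields $c_1(L|_X,\metr_L)^{\wedge d}|_{\relint(\sigma)} = r_\sigma\,\mu_\sigma$ with $r_\sigma = \sum c_{\Delta_S}(L)\cdot J_{S,\sigma}\geq 0$, the sum running over non-degenerate $\Delta_S$ with $f(\Delta_S)\supset\sigma$ and $\dim\Delta_S=\dim\sigma$.

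For the strict positivity: if $\sigma\in\Sigma$ is maximal, then $\dim\sigma=d$, and $\sigma$ lies in $f(\Delta_S)$ for some non-degenerate top-dimensional canonical face $\Delta_S$ (which has dimension $d-\dim S = d$, corresponding to a $0$-dimensional stratum of $\Xfrak_s'$), so at least one positive summand contributes to $r_\sigma$ and $r_\sigma>0$. Summing the identities $c_1(L|_X,\metr_L)^{\wedge d}|_{\relint(\sigma)} = r_\sigma\mu_\sigma$ over $\sigma\in\Sigma$ gives the asserted decomposition.

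The main obstacle I anticipate is the construction of $\Sigma$ in the first paragraph: producing a common polytopal $\qgamma$-refinement of finitely many $\qgamma$-polytopes inside the $\qgamma$-skeleton $S_X$ while preserving the piecewise $\qgamma$-linear structure. Locally, via the charts of the $\qgamma$-skeleton $S_X$ supplied by Theorem \ref{piecewise linear structure of canonical subset}, this reduces to intersecting finitely many $\qgamma$-polytopes in Euclidean space, which is standard; the delicate point is gluing these local decompositions coherently across chart overlaps. Once $\Sigma$ is fixed, all subsequent steps are a mechanical application of Remark \ref{support and non-degenerate simplices} and the projection formula.
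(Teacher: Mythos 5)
Your overall strategy matches the paper's: both fix a strictly polystable alteration, rely on Theorem \ref{piecewise linear structure of canonical subset} and Remark \ref{support and non-degenerate simplices}, and push the Monge--Amp\`ere measure down via the projection formula \eqref{projection formula}. The paper differs in one organizational point: it constructs a $\qgamma$-polytopal decomposition $\Sigma'$ of $S(\Xfrak')_{\rm nd}$ \emph{upstairs}, refining the canonical face structure so that $f$ restricts to a genuine $\qgamma$-affine (not merely piecewise $\qgamma$-linear) isomorphism on each $\sigma'\in\Sigma'$, and then sets $\Sigma\coloneqq f(\Sigma')$. This guarantees that the push-forward of a Lebesgue measure on $\sigma'$ is a Lebesgue measure on $\sigma=f(\sigma')$, which is exactly the point your proposal glosses over: on a cell $\sigma$ of a common refinement chosen downstairs, $f$ restricted to a preimage piece $\Delta_S\cap f^{-1}(\sigma)$ is a priori only piecewise affine, so the ``Jacobian'' $J_{S,\sigma}$ you invoke is not automatically a single constant. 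You acknowledge this as a delicate gluing step, but the cleaner route (and the paper's) is to refine upstairs where the affine structure is evident.

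There is a genuine error in your positivity argument. You assert that a maximal $\sigma\in\Sigma$ necessarily has $\dim\sigma=d$ and therefore sits inside some $f(\Delta_S)$ with $\Delta_S$ a $d$-dimensional non-degenerate canonical face. This is false in general: the canonical subset $S_X$ need not be pure-dimensional, and maximal faces of $\Sigma$ can have dimension strictly less than $d$ (the introduction explicitly warns that ``lower dimensional polytopes are also allowed'', and this already occurs for abelian varieties of mixed degeneration type). The correct argument for $r_\sigma>0$ when $\sigma$ is maximal makes no reference to dimension: if $\sigma$ is maximal then $\relint(\sigma)$ is open in $|\Sigma|=S_X$, and since $S_X$ is by definition the support of $c_1(L|_X,\metr_L)^{\wedge d}$, any non-empty open subset of $S_X$ has positive measure, so $r_\sigma\,\mu_\sigma(\relint(\sigma))=c_1(L|_X,\metr_L)^{\wedge d}(\relint(\sigma))>0$ and hence $r_\sigma>0$. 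Replace your dimension argument with this support argument and the proof is sound.
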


\begin{proof}
We choose a strictly polystable alteration $\varphi_0\colon \Xfrak' \to \Xfrak$ as in Remark \ref{canonical subset and polystable alteration} with generic fiber $f\colon \Xfrak_\eta'\to \Xan$. We have seen in Theorem \ref{piecewise linear structure of canonical subset} that $f$ restricts to a surjective piecewise $\qgamma$-linear map $\Sk(\Xfrak')_{\rm nd} \to S_X$ with finite fibers. It follows that there is a $\qgamma$-polytopal decomposition $\Sigma'$ of $\Sk(\Xfrak')_{\rm nd}$ refining the canonical face structure of $\Sk(\Xfrak')$ such that $\Sigma \coloneqq f(\Sigma')$ is a polytopal decomposition of $S_X$. Note that 
$f$ restricts to a $\qgamma$-affine isomorphism $\sigma' \to \sigma \coloneqq f(\sigma')$ for all $\sigma' \in \Sigma'$. We conclude that the  push-forward of a Lebesgue measure on $\sigma'$ is a Lebesque measure on $\sigma$. 
Using the projection formula \eqref{projection formula}, the claim follows from Remark \ref{support and non-degenerate simplices}.
\end{proof}
 
 The following tropical description of the canonical Monge--Amp\`ere measures has been shown in \cite[Theorem 1.1]{gubler-compositio} in the special case of $K$ being the completion of the algebraic closure of a field with a discrete valuation. This statement is crucially used in Yamaki's reduction theorem, which is a major contribution to the proof of the geometric Bogomolov conjecture by Xie and Yuan \cite{xie_yuan}. We show here that this tropical description is true for any algebraically closed non-trivially valued non-archimedean field $K$.
 
 \begin{thm} \label{tropical description of canonical measures}
 	The canonical tropicalization map $\tropbar \colon A^\an \to N_\R/\Lambda$ gives a surjective piecewise $\qgamma$-linear map $S_X \to \tropbar(\Xan)$ with finite fibers. 
 \end{thm}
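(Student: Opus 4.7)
I plan to fix a strictly polystable alteration $\varphi_0\colon \Xfrak' \to \Xfrak$ as in \ref{setup}, with generic fiber $f\colon \Xfrak_\eta' \to \Xan$. The identity $\tropbar \circ f = \overline{f}_{\rm aff} \circ \tau$ from \ref{linearization of f}, combined with the surjectivity of $f$ and the retraction property of $\tau$ onto $S(\Xfrak')$, immediately yields $\tropbar(\Xan) = \overline{f}_{\rm aff}(S(\Xfrak'))$, while Remark \ref{canonical subset and polystable alteration} gives $S_X = f(S(\Xfrak')_{\rm nd})$ and hence $\tropbar(S_X) = \overline{f}_{\rm aff}(S(\Xfrak')_{\rm nd})$. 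The theorem therefore splits into two parts: the piecewise $\qgamma$-linearity and finite fibers of $\tropbar|_{S_X}$ on the one hand, and the set-theoretic equality $\overline{f}_{\rm aff}(S(\Xfrak')) = \overline{f}_{\rm aff}(S(\Xfrak')_{\rm nd})$ needed for surjectivity onto $\tropbar(\Xan)$ on the other.

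For the first part, I invoke Theorem \ref{piecewise linear structure of canonical subset}: for each non-degenerate canonical face $\Delta_S$ of $S(\Xfrak')$, the morphism $f$ restricts to a piecewise $\qgamma$-linear isomorphism $\Delta_S \to f(\Delta_S)$. Under this isomorphism, $\tropbar|_{f(\Delta_S)}$ corresponds to the $\qgamma$-affine map $\overline{f}_{\rm aff}|_{\Delta_S}$, and the non-degeneracy condition $\dim(\overline{f}_{\rm aff}(\Delta_S)) = \dim(\Delta_S)$ forces this affine map to be injective on the affine span of $\Delta_S$. Since the proper formal scheme $\Xfrak'$ has only finitely many strata, $S_X$ is the finite union of such $f(\Delta_S)$, so $\tropbar|_{S_X}$ is piecewise $\qgamma$-linear with finite fibers whose cardinality is bounded by the number of non-degenerate canonical faces.

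For the surjectivity part, I would show that $\overline{f}_{\rm aff}(\Delta_T) \subseteq \overline{f}_{\rm aff}(S(\Xfrak')_{\rm nd})$ for every canonical face $\Delta_T$. When $\dim(\overline{f}_{\rm aff}(\Delta_T)) < \dim(\Delta_T)$ (type-1 degeneracy), the affine map $\overline{f}_{\rm aff}|_{\Delta_T}$ is constant along some nonzero direction $v \in \L_{\Delta_T}$, and since any line $\xi + \R v$ with $\xi \in \Delta_T$ must exit $\Delta_T$ through $\partial \Delta_T$, one gets $\overline{f}_{\rm aff}(\Delta_T) = \overline{f}_{\rm aff}(\partial \Delta_T)$. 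This reduces the problem to the case $\dim(\overline{f}_{\rm aff}(\Delta_T)) = \dim(\Delta_T)$ via induction on dimension. The remaining subcase is $\dim(\overline{f}_{\rm aff}(\Delta_T)) = \dim(\Delta_T)$ together with $\dim(G(T)) < \dim(T)$ (type-2 degeneracy), which I expect to be the main technical obstacle: here I would exhibit an over-face $\Delta_{T'}$ on which both non-degeneracy conditions hold using the interplay between the Mumford stratification of $\Afrak_0$ and the polystable alteration together with the combinatorial approximation techniques from Section \ref{section strictpolyalt} and Proposition \ref{transversal pl approximation}, following and extending the strategy of \cite[\S 6]{gubler-compositio} from the discretely valued to the general non-archimedean setting.
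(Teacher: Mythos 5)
Your first part — piecewise $(\Q,\Gamma)$-linearity and finite fibers — is sound and uses the same key input as the paper, namely Theorem \ref{piecewise linear structure of canonical subset} together with the identity $\tropbar\circ f = \overline{f}_{\rm aff}\circ\tau$ and the representation $S_X=f(S(\Xfrak')_{\rm nd})$. Your reformulation of surjectivity as the set-theoretic identity $\overline{f}_{\rm aff}(S(\Xfrak')) = \overline{f}_{\rm aff}(S(\Xfrak')_{\rm nd})$ is also correct, and your type-1 reduction ($\dim(\overline{f}_{\rm aff}(\Delta_T))<\dim(\Delta_T)$ implies $\overline{f}_{\rm aff}(\Delta_T)=\overline{f}_{\rm aff}(\partial\Delta_T)$) is a valid dimension-descent step.

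The genuine gap is exactly where you flag it: the type-2 case, where $\overline{f}_{\rm aff}|_{\Delta_T}$ is injective but $\dim(G(T))<\dim(T)$. You propose to ``exhibit an over-face $\Delta_{T'}$ on which both non-degeneracy conditions hold'' but give no argument for its existence, and it is not at all obvious that such an over-face exists — for instance, $\Delta_T$ may already be a maximal canonical face (this happens already when $\Delta_T$ is a vertex and $\dim(B)<d$, in which case the induction has no further room to move). This is precisely the difficulty that forces a different strategy, and it is not recoverable by citing the approximation machinery of Section \ref{section plappr} or the degree formula of Section \ref{section strictpolyalt}, which are aimed at computing measures, not at manufacturing non-degenerate over-faces. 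The paper's actual proof avoids the face-by-face induction entirely: it picks a \emph{generic} point $\overline\omega\in\tropbar(\Xan)$ (with coordinates in $N_\Gamma/\trop(\Lambda)$, not lying in any lower-dimensional image polytope $\overline{f}_{\rm aff}(\Delta_T)$), builds a Mumford model $\Afrak_1$ from a $\trop(\Lambda)$-periodic decomposition $\Ccal_1$ chosen so that the face $\overline\Delta\ni\overline\omega$ satisfies $\tropbar(\Xan)\cap\overline\Delta=\{\overline\omega\}$ and $\codim(\Delta)=d-e$, finds an irreducible component $Y$ of the closure of $X$ in $\Afrak_1$ meeting the corresponding open formal subscheme, lifts to a component $Y'$ of $\Xfrak_s''$ with vertex $\xi'$, and then verifies non-degeneracy of the canonical face containing $\xi'$ by a dimension count exploiting the fiber-bundle structure of Mumford strata over $\Bfrak_s$ (specifically $\dim(q_1(Y))\geq e$), after which a density argument handles all of $\tropbar(\Xan)$. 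None of this is an induction on faces; it is a global construction tied to a cleverly chosen model, and that is the piece your proposal is missing.
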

 
 \begin{proof}
 	The proof follows the same lines as in \cite[\S 7]{gubler-compositio} and so we only give a sketch, mainly pointing out the necessary adaptions. We have seen in Theorem \ref{piecewise linear structure of canonical subset} that the canonical subset $S_X$ of $\Xan$ is a $\qgamma$-skeleton in $\Xan$, which implies that $\tropbar$ induces a piecewise $\qgamma$-linear map $S_X \to \tropbar(\Xan)$. 
 	We use a strictly polystable alteration $\varphi_0\colon \Xfrak' \to \Xfrak$ as in Remark \ref{canonical subset and polystable alteration} with generic fiber $f\colon \Xfrak_\eta'\to \Xan$. 
 	Since $\overline f_{\rm aff}$ agrees with $\tropbar\circ f$ on $\Sk(\Xfrak')$, it follows easily from $S_X=f(\Sk(\Xfrak')_{\rm nd})$ that the piecewise linear map $S_X \to \tropbar(\Xan)$ has finite fibers. It remains to prove surjectivity.
 	 We have to show that for any  $\overline{\omega} \in \tropbar(\Xan)$ there is a canonical face $\Delta_S$ of $\Sk(\Xfrak')$ which is non-degenerate with respect to $f$ such that 
 	\begin{equation} \label{surjectivity}
 	\overline{\omega} \in \overline f_{\rm aff}(\Delta_S)= \tropbar(f(\Delta_S)).
 	\end{equation}
 	Let $d-e$ be the local dimension of $\tropbar(\Xan)$ at $\overline\omega$. Using the density of the value group $\Gamma$, we may assume that $\overline\omega \in N_\Gamma/\trop(\Lambda)$ and that $\overline\omega$ is not contained in a polytope $\overline f_{\rm aff}(\Delta_T)$ of lower dimension.

 	Our tropical dimensionality assumption at $\overline\omega$ allows us to find  a $\trop(\Lambda)$-periodic $(\Z,\Gamma)$-polytopal decomposition $\Ccal_1$ of $N_\R$ such that for the unique $\Delta\in \Ccal_1$ with $\overline\omega \in \relint(\overline\Delta)$ we have $\tropbar(\Xan)\cap \overline\Delta=\{\overline\omega\}$ and $\codim(\Delta)=d-e$. 
 	Similarly as in the proof of Theorem \ref{toric MA on X'}, we have a canonical morphism $\varphi_1\colon \Xfrak'' \to \Afrak_1$ to the Mumford model $\Afrak_1$ associated to $\Ccal_1$, where $\Xfrak''$ is the formal scheme over $\Xfrak'$ associated to the subdivision 
 	$\Dcal= \{ \Delta_S \cap \overline{f}_{\rm aff}^{-1}(\overline\sigma) \mid \text{$S$ stratum of $\Xfrak'$ , $\overline\sigma \in \overline{\Ccal_1}$}\}$. 
 	The analytic domain $\tropbar^{-1}(\overline\Delta)$ is the generic fiber of a formal open subset $\Ufrak$ of $\Afrak_1$. Let $\Xfrak_1$ be the closure of $X$ in $\Afrak_1$. 
 	The special fiber of $\Xfrak_1$ has an irreducible component $Y$ intersecting $\Ufrak$. Since the map $\varphi_1$ induces a surjective proper map $\Xfrak'' \to \Xfrak_1$,  there is an irreducible component $Y'$ of $\Xfrak_s''$ mapping onto $Y$. Let $\xi'$ be the   vertex  of $\Dcal$ corresponding to $Y'$ (see \ref{properties of refinement}), then one deduces from the choice of $\Ccal_1$ that $\overline f_{\rm aff}(\xi')=\overline\omega$. We claim that the unique canonical face $\Delta_S$ of $\Sk(\Xfrak')$ with $\xi' \in \relint(\Delta_S)$ is non-degenerate with respect to $f$ which then proves \eqref{surjectivity} and the theorem. 
 	
 	Using that $\relint(\Delta_S)$ contains a vertex of $\Dcal$, one deduces that $\overline f_{\rm aff}$ is injective on $\Delta_S$ proving the first condition for non-degeneracy with respect to $f$. 
 	Since $\overline f_{\rm aff}(\Delta_S)$ contains $\overline\omega$, we have $\dim(\Delta_S)=\dim(\overline f_{\rm aff}(\Delta_S))=d-e $, hence  the corresponding stratum $S$ is $e$-dimensional. Let $G\colon \overline S \to \Bfrak$ be the morphism from \ref{preliminaries for MA}. Then it is clear that $\dim(G(S))\leq e$ and it remains to show equality.  
 	It is shown in \cite[Proposition 4.8]{gubler-compositio} that the strata of the  special fiber of the formal Mumford model $\Afrak_1$ correspond bijectively to the  faces of $\overline\Ccal_1$. Using the construction of Mumford models, there is a canonical multi-valued morphism $q_1\colon \Afrak_1 \to \Bfrak$. The restriction of $q_1$ to a stratum closure becomes a single-valued morphism which is canonical up to translation. 
 	We have seen in \ref{properties of refinement} that the dense stratum of $Y'$ is a fiber bundle over $S$ which can be used together with $\varphi_1(Y')=Y$ to show  that $G(\overline S)=q_1(Y)$ for a suitable choise of the morphism $q_1\colon Y \to \Bfrak_s$.
 	By \cite[Proposition 4.8]{gubler-compositio} again, if $W_{\overline\Delta}$ is the stratum corresponding to $\overline\Delta$, then $\overline W_{\overline\Delta}$ is a fiber bundle over $\Bfrak_s$ with fiber isomorphic to the $\codim(\Delta)$-dimensional toric variety given by the star of $\Delta$. 
 	Since $\xi \coloneqq f(\xi')$ is a point of $\Xan$ with reduction equal to the generic point of $Y$ and since $\tropbar(\xi)=\overline f_{\rm aff}(\xi')=\overline \omega \in \relint(\overline \Delta)$, we conclude that $Y$ is contained in $\overline W_{\overline\Delta}$. 
 	As this stratum has relative dimension $d-e$ over $\Bfrak_s$ and since $d=\dim(Y)$, we deduce that $\dim(q_1(Y))\geq e$. We conclude that 
 	$$e \geq \dim(G(S))=\dim(q_1(Y)) \geq e$$
 	proving equality everywhere and hence $\Delta_S$ is non-degenerate with respect to $f$.
 \end{proof}
 
\appendix

\section{Differential forms, currents and positivity}  \label{section app diffforms}
  
Here, we summarize  results about differential forms and currents on tropical and non-archimedean spaces.
Let $N$ be a free abelian group of rank $n$ and $N_\R$ the base extension to $\R$. In the applications below, we usually take $N=\Z^n$ and hence $N_\R=\R^n$,  which amounts to fix a basis in the lattice $N$. 

\begin{art} \label{Lagerberg forms}
	In Lagerberg's thesis \cite{lagerberg-2012}, he has introduced a bigraded differential sheaf of $\R$-algebras $A^{\bullet,\bullet}$ on $N_\R$ with differentials $d',d''$. We call the elements \emph{Lagerberg forms}. They have similar properties as the complex $(p,q)$-forms. 
	
	More generally, by restriction we get a bigraded differential sheaf $A^{\bullet,\bullet}$ of $\R$-algebras with differentials $d',d''$ on any tropical cycle $S$ of $N_\R$ and a dual notion of currents on $S$, see \cite[\S 3]{gubler-kuenne2017}. The elements of $A^{\bullet,\bullet}$ can be seen as functions on $S$ which we call \emph{smooth functions}.
\end{art}

\begin{art} \label{positive Lagerberg forms}
There is a unique involution $J$ of the sheaf $A^{\bullet,\bullet}$ which leaves the smooth functions invariant and satisfies $Jd'=d''J$. A smooth $(p,p)$-form $\alpha$ on $S$ is called \emph{positive} if 
$$\alpha =  (-1)^{\frac{p(p-1)}{2}} \sum_{j=1}^m f_j \alpha_j \wedge J \alpha_j$$ 
for smooth non-negative functions $f_j$ and smooth $(p,0)$-forms $\alpha_j$ on $S$. Again, positive forms are obtained from positive forms on $N_\R$ and the latter are studied in \cite{burgos-gubler-jell-kuennemann1}. In particular, we deduce that positive Lagerberg forms on $S$ are closed under products.

A Lagerberg current $T$ on $S$ is called of type $(p,q)$ if it acts on the compactly supported $(p,q)$-forms on $S$. A Lagerberg current $T$ on $S$ of type $(p,p)$ is called \emph{symmetric} if $TJ=(-1)^pT$. A \emph{positive Lagerberg current} is a symmetric Lagerberg current $T$ of type $(p,p)$ on $S$ such that $T(\alpha)\geq 0$ for all compactly supported smooth $(p,p)$ forms $\alpha$ on $S$.
\end{art}

In the following, we denote by $X$ a good strictly analytic  space over a non-trivially valued non-archimedean field $K$. Recall that Berkovich introduced the \emph{boundary} $\partial X$ of $X$ in \cite[\S 3.1]{berkovich-book}. We call $X$ \emph{boundary-free} if $\partial X=\emptyset$. The analytification of an algebraic variety over $K$ is always boundary-free \cite[Theorem 3.4.1]{berkovich-book}.

\begin{art} \label{tropicalizations}
Let $W$ be a compact strictly analytic domain in $X$. We call $h\colon W \to \R^n$ a \emph{smooth tropicalization map}  if all the coordinate functions are given by $h_i = \walter{-\log |f_i|}$ for invertible analytic functions $f_i$ on $W$. We call $h$ a \emph{harmonic tropicalization map} if all the $h_i$ are harmonic functions,  see \cite[\S 7]{gubler_rabinoff_jell:harmonic_trop}. Since every smooth function is harmonic, every smooth tropicalization map is harmonic.  For a harmonic tropicalization map $h$, a generalization of Berkovich of the Bieri--Groves theorem from tropical geometry shows that the tropical variety $h(W)$ is a finite union of $(\Z,\Gamma)$-polytopes in $\R^n$ of dimension at most $\dim(W)$.	
\end{art}

Chambert-Loir and Ducros \cite{chambert-loir-ducros} used smooth tropicalization maps to introduce smooth $(p,q)$-forms on Berkovich spaces. In \cite{gubler_rabinoff_jell:harmonic_trop}, the smooth tropicalization maps were replaced by harmonic tropicalization maps to obtain a larger class of weakly smooth forms with better cohomological behavior. The constructions can be summarized as follows. 

\begin{prop} \label{intro: characterization weakly smooth}
	There is a bigraded differential sheaf $A_{\rm sm}^{\bullet,\bullet}$ (resp.~ $A^{\bullet,\bullet}$) of $\R$-algebras on $X$ with an alternating product $\wedge$ and differentials $d',d''$ satisfying the following properties:
	\begin{enumerate}
		\item \label{functoriality of pull-back}
		For a morphism $f\colon X' \to X$ of good strictly analytic spaces, there is a functorial homomorphism $f^*\colon A_X \to  f_*A_{X'}$ of sheaves of bigraded differential $\R$-algebras.
		\item \label{harmonic lift}
		If  $h\colon W \to \R^n$ is a smooth (resp.~harmonic) tropicalization map on a compact strictly analytic subdomain $W$ of $X$, there is an injective homomorphism $$h^*\colon A^{\bullet,\bullet}(h(W))\to A^{\bullet,\bullet}(W)$$ of bigraded differential $\R$-algebras lifting smooth Lagerberg forms from  $h(W)$ to $W$.
		\item \label{functoriality of harmonic lift}
		Using the above notation, we have $(h\circ f)^*=f^* \circ h^*$.
		\item \label{locally given by lifts}
		For $\omega \in A(X)$, any $x \in X$ has a strictly affinoid neighbourhood $W$ with a smooth (resp.~harmonic) tropicalization map $h \colon W \to \R^n$ such that $\omega|_W=h^*(\alpha)$ for some $\alpha \in A(h(W))$.
	\end{enumerate}
	We call  $A_{\rm sm}^{\bullet,\bullet}$ (resp.~$A^{\bullet,\bullet}$) the sheaf of smooth (resp.~weakly smooth) forms. These sheaves of bigraded differential $\R$-algebras are characterized up to unique isomorphisms by \ref{functoriality of pull-back}--\ref{locally given by lifts}.
\end{prop}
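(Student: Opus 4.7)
The plan is to construct $A^{\bullet,\bullet}_{\rm sm}$ (resp.\ $A^{\bullet,\bullet}$) as a sheafification of the presheaf whose sections over an open $U\subset X$ are equivalence classes of local data $(W_i,h_i,\alpha_i)_{i\in I}$, where $\{W_i\}$ is a (G-)covering of $U$ by compact strictly analytic domains, $h_i\colon W_i\to\R^{n_i}$ is a smooth (resp.\ harmonic) tropicalization map, and $\alpha_i\in A^{p,q}(h_i(W_i))$ is a smooth Lagerberg form. This is the construction of Chambert--Loir and Ducros in the smooth case, generalized by Gubler--Rabinoff--Jell in the harmonic case, and I would follow those references closely.

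First I would define the equivalence relation: two presentations $(W,h,\alpha)$ and $(W',h',\alpha')$ agree at $x\in W\cap W'$ if, on some compact strictly analytic neighbourhood $Z$ of $x$ in $W\cap W'$, the product tropicalization $\tilde h=(h|_Z,h'|_Z)\colon Z\to\R^{n+n'}$ satisfies $\tilde h^*(\pi^*\alpha)=\tilde h^*(\pi'^*\alpha')$, where $\pi,\pi'$ are the projections to the factors of $\R^{n+n'}$, and these identities are tested as equalities of Lagerberg forms on the tropical polyhedral complex $\tilde h(Z)\subset\R^{n+n'}$ (finite by the Berkovich--Bieri--Groves theorem recalled in \ref{tropicalizations}). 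The key input for this to be a well-defined equivalence relation is that any two tropicalization maps can be combined into a third by taking products, and that pullback to $\tilde h(Z)$ via the projections is compatible with the inclusion $\tilde h(Z)\hookrightarrow h(Z)\times h'(Z)$; both facts follow from the fact that smooth Lagerberg forms pull back along affine-linear maps of polyhedral complexes.

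Next I would define the operations $\wedge$, $d'$, $d''$ and the involution $J$ on local presentations, by applying the corresponding operations to $\alpha$; well-definedness is checked on a common refinement using products of tropicalizations. The pull-back map for a morphism $f\colon X'\to X$ is defined by $(W,h,\alpha)\mapsto(f^{-1}(W),h\circ f,\alpha)$, which is again a valid local presentation because compositions of smooth (resp.\ harmonic) functions with analytic morphisms remain smooth (resp.\ harmonic). The axiom (i) is then automatic from the construction; axiom (iii) is tautological; axiom (iv) is immediate from the definition as a sheafified presheaf of equivalence classes, provided we check that the presheaf already satisfies locality on strictly affinoid subdomains (which it does, since the equivalence relation is local). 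For axiom (ii), injectivity of $h^*\colon A^{\bullet,\bullet}(h(W))\to A^{\bullet,\bullet}(W)$ is the most delicate point and requires showing that if a Lagerberg form $\alpha$ on $h(W)$ pulls back to zero on $W$, then $\alpha$ vanishes on the image; this uses that $h(W)$ is a finite union of polytopes of dimension at most $\dim(W)$ and that at a suitable Abhyankar point of $W$ the tangent map of $h$ is surjective onto the local tropical tangent space, so a non-zero Lagerberg form cannot have vanishing pullback.

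Finally, I would derive the uniqueness assertion: given two sheaves $A_1,A_2$ satisfying (i)--(iv), axiom (iv) says any local section is of the form $h^*\alpha$, axiom (iii) says the would-be isomorphism is forced to send $h^*\alpha\mapsto h^*\alpha$, and axiom (ii) (the injectivity) shows this is well-defined and bijective on local data; one checks compatibility with differentials and products because these are defined at the Lagerberg level on $h(W)$. I expect the main obstacle to be the verification of axiom (ii) in the harmonic case, since smoothness of the tropicalization map was used in the Chambert-Loir--Ducros framework to guarantee that locally in a dense open subset $h$ looks like a submersion onto its tropical image, whereas for merely harmonic tropicalizations one has to invoke the finer structural results from \cite{gubler_rabinoff_jell:harmonic_trop} on the local shape of harmonic tropicalizations near Abhyankar points.
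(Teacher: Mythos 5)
The paper provides no proof of this proposition. It appears as a purely expository summary in Appendix~A, with the sentence immediately preceding it explaining that these constructions are due to Chambert-Loir--Ducros \cite{chambert-loir-ducros} in the smooth case and to Gubler--Rabinoff--Jell \cite{gubler_rabinoff_jell:harmonic_trop} in the weakly smooth (harmonic) case, and that ``the constructions can be summarized as follows.'' So there is no ``paper's own proof'' to compare against; the proposition is a citation.

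Your proposal is a reasonable reconstruction of the referenced constructions: build the presheaf from local presentations $(W_i,h_i,\alpha_i)$, compare two presentations via the product tropicalization, sheafify, and check the four axioms. That is indeed the shape of the argument in \cite{chambert-loir-ducros} (and its harmonic extension). A few cautions worth flagging if you were to turn this sketch into a proof. First, your axiom~\ref{locally given by lifts} is not quite ``immediate from sheafification'': one has to show that the presheaf of forms already satisfies a gluing condition on strictly affinoid subdomains (so that local sections really do admit a single presentation on a suitable strictly affinoid neighbourhood), which is a nontrivial lemma in \cite{chambert-loir-ducros}. Second, for the harmonic case, the assertion that ``compositions of harmonic functions with analytic morphisms remain harmonic'' is exactly one of the properties established in \cite{gubler_rabinoff_jell:harmonic_trop}; it is not formal and it is the reason the smooth theory does not literally carry over. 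Third, your sketch of the injectivity in axiom~\ref{harmonic lift} gestures at the right idea (non-vanishing of $\alpha$ must be detected at an Abhyankar point over the relative interior of a maximal face of $h(W)$), but the precise statement in the references goes through the Bieri--Groves structure of $h(W)$ and the polyhedral structure of the skeleton; in the harmonic case this really does rely on the structural results of \cite[\S 7]{gubler_rabinoff_jell:harmonic_trop}, as you correctly anticipate. Given that the paper itself delegates the entire proof to the references, the appropriate ``proof'' here is the pair of citations; your outline is consistent with those references, though each of the above steps would still need to be checked against them rather than re-proved from scratch.
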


\begin{art}  \label{currents}
If $X$ is also boundary-free and separated, Chambert-Loir and Ducros introduced \emph{currents of type $(p,q)$} as continuous linear functionals acting on compactly supported smooth forms of bidegree $(p,q)$ \cite[\S 4]{chambert-loir-ducros}. The analogous continuous linear functionals on the space of compactly supported weakly smooth forms are called \emph{strong currents}. We denote by $D_{p,q}^{\rm sm}$ ~(resp.~$D_{p,q}$) the sheaf of currents (resp.~strong currents) of type $(p,q)$ on $X$. 

For any smooth (resp.~weakly smooth) form $\omega$, the theory of integration for top dimensional forms \cite[\S 3]{chambert-loir-ducros} yields an associated current $[\omega]_{\rm sm}$ (resp.~strong current $[\omega]$) similarly as in complex analysis, see \cite[\S 4.3]{chambert-loir-ducros} and \cite[\S 11]{gubler_rabinoff_jell:harmonic_trop}. 
\end{art}

\begin{art} \label{positive forms and currents}
We call a smooth (resp.~weakly smooth) $(p,p)$-form on $X$ \emph{positive} if it is locally given by the pull-back of a smooth positive Lagerberg form with respect to a smooth (resp.~harmonic) tropicalization map in the sense of Proposition \ref{intro: characterization weakly smooth}\ref{locally given by lifts}. Again, there is a unique involution $J$ acting on $A$ and on its subsheaf $A_{\rm sm}$ which leaves the weakly smooth functions invariant and satisfies $d'J=Jd''$. 

Now assume that $X$ is also boundary-free and separated. By duality, we also get a Lagerberg involution $J$ acting on $D^{\rm sm}$ ~(resp.~$D$). A (strong) current $T$ of type $(p,p)$ on $X$ is called \emph{symmetric} if $TJ=(-1)^pT$. We say that a symmetric (strong) current $T$ of type $(p,p)$ is \emph{positive} if $T(\omega) \geq 0$ for all compact supported (weakly) smooth positive forms $\alpha$ of bidegree $(p,p)$ on $X$.
\end{art}                                                                                                                                                             
                                                                                                                                                                      
\begin{art} \label{first Chern current}                                                                                                                               
Still assuming $X$ boundary-free and separated, we assume that $L$ is a line bundle on $X$ endowed with a continuous metric $\metr$. Then the \emph{first Chern current} (resp.~\emph{strong first Chern current}) of $(L,\metr)$ is given as follows: Locally, we choose an open subset $U$ of $X$ which is a trivialization of $L$ over $U$. Hence there is a frame $s \in L(U)$. Then the first Chern current (resp.~strong first Chern current) of $(L,\metr)$ is given on $U$ by $d'd''[-\log\|s\|]_{\rm sm}$ (resp.~$d'd''[-\log\|s\|]$). As this does not depend on the choice of the trivialization, this defines a globally defined (strong) current, see \cite[\S 6.4]{chambert-loir-ducros}. Note that the restriction of the strong first Chern current to compactly supported smooth forms agrees with the first Chern current.
\end{art}

The following result has been shown by Chambert-Loir and Ducros \cite[Lemme 5.5.3]{chambert-loir-ducros} for  smooth tropicalization maps. Their argument generalizes to  harmonic tropicalization maps. For convenience of the reader, we will provide the proof here.

\begin{prop} \label{positive smooth forms and tropicalization}
Let $X$ be a compact good strictly analytic space over $K$ of pure dimension $d$ with a harmonic tropicalization map $h\colon X \to \R^n$. We consider a smooth function $f \colon h(X) \to \R$. Then $d'd''[f\circ h]$ is a positive strong current on $X \setminus \partial X$ if and only if the restriction of $f$ to any $d$-dimensional face of the tropical variety $h(X)$ is convex.
\end{prop}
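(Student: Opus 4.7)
The strategy is to reduce the statement to Lagerberg's characterization of positivity on tropical cycles \cite[Proposition 2.5]{lagerberg-2012} via the tropical integration formalism, generalizing the argument of Chambert-Loir--Ducros \cite[Lemme 5.5.3]{chambert-loir-ducros} from smooth to harmonic tropicalization maps. Since positivity of a strong current is a local condition, I would fix a point $x\in X\setminus\partial X$ and a compactly supported weakly smooth positive $(d-1,d-1)$-form $\alpha$ on an open neighbourhood of $x$, and compute $\langle d'd''[f\circ h],\alpha\rangle$.

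First I would bring $\alpha$ and $f\circ h$ onto a common tropicalization. By Proposition \ref{intro: characterization weakly smooth}\ref{locally given by lifts} applied to $\alpha$, after shrinking we can write $\alpha=g^*(\beta)$ for a harmonic tropicalization map $g\colon W\to\R^m$ and a smooth positive Lagerberg form $\beta$ on $g(W)$. Combining, the product $h'\coloneqq (h,g)\colon W\to\R^{n+m}$ is again harmonic, and by Proposition \ref{intro: characterization weakly smooth}\ref{functoriality of pull-back}--\ref{functoriality of harmonic lift} both $f\circ h$ and $\alpha$ are pulled back from $h'(W)$: writing $\mathrm{pr}_1\colon\R^{n+m}\to\R^n$ and $\mathrm{pr}_2$ the other projection, we have $f\circ h=(f\circ\mathrm{pr}_1)\circ h'$ and $\alpha=\mathrm{pr}_2^*(\beta)\circ h'$ (using the same symbol for the pullback Lagerberg form). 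Then a Stokes / integration by parts argument on $W$ combined with the tropical integration formula of Chambert-Loir--Ducros (which, as shown in \cite{gubler_rabinoff_jell:harmonic_trop}, extends to harmonic tropicalizations) yields
\begin{equation*}
\langle d'd''[f\circ h],\alpha\rangle=\int_{h'(W)} d'd''(f\circ\mathrm{pr}_1)\wedge\mathrm{pr}_2^*(\beta),
\end{equation*}
where the right-hand side is the integral of a Lagerberg $(d,d)$-form on the tropical cycle $h'(W)$, weighted by the Bieri--Groves multiplicities.

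For the ``if'' direction, assume $f$ is convex on each $d$-dimensional face of $h(X)$. Each $d$-dimensional face $\sigma'$ of $h'(W)$ maps under $\mathrm{pr}_1$ affinely into a $d$-dimensional face $\sigma$ of $h(W)\subset h(X)$ (lower-dimensional contributions do not occur in the top-degree integration), so $f\circ\mathrm{pr}_1$ restricted to $\sigma'$ is convex. Lagerberg's result then implies that $d'd''(f\circ\mathrm{pr}_1)$ is a positive Lagerberg current on each $d$-dimensional face of $h'(W)$, and since positive Lagerberg forms are closed under products, the above integral is non-negative for every positive $\beta$; running over all choices of $\alpha$ shows that $d'd''[f\circ h]$ is a positive strong current.

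For the ``only if'' direction, I would argue contrapositively: suppose $f|_\sigma$ fails to be convex at some point $y$ in the relative interior of a $d$-dimensional face $\sigma$ of $h(X)$. Lagerberg's characterization supplies a compactly supported smooth positive Lagerberg $(d-1,d-1)$-form $\beta_0$ on a neighbourhood of $y$ in $\sigma$ against which $d'd''f$ pairs negatively. The task is then to realize such a $\beta_0$ as (the relevant piece of) some $\alpha=h^*(\beta)$ with $\alpha$ weakly smooth positive on a strictly affinoid neighbourhood $W\subset X$ with $h(W)\cap\sigma$ a neighbourhood of $y$ in $\sigma$; the existence of $W$ uses that Abhyankar points lying over $\relint(\sigma)$ are dense in $h^{-1}(y)$ via the Bieri--Groves dimension equality for top-dimensional faces. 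Inserting this $\alpha$ in the displayed formula produces a negative pairing with $d'd''[f\circ h]$, contradicting positivity.

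The main obstacle is the integration-by-parts step and the control of boundary terms: one must justify rigorously that, for weakly smooth $\alpha$ (not just smooth), the duality $\langle d'd''[f\circ h],\alpha\rangle=\langle [f\circ h],d'd''\alpha\rangle$ holds and then translates into the claimed tropical integral via the Chambert-Loir--Ducros slicing theorem in the harmonic setting. This is where the extension from \cite[Lemme 5.5.3]{chambert-loir-ducros} to harmonic tropicalizations via \cite{gubler_rabinoff_jell:harmonic_trop} does the real work; once available, the reduction to Lagerberg's theorem is formal.
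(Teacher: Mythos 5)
Your approach to the ``if'' direction is essentially the route the paper takes: reduce the pairing to a tropical integral over a refinement of $h$ and conclude via the stability of positive Lagerberg forms under products. The product tropicalization $h'=(h,g)$ you propose is the same device as the paper's refinement $h_i$ of $h$. However, you acknowledge but do not resolve the one genuinely nontrivial technical point, and it is precisely the point where the paper does real work. You shrink to a neighbourhood $W$ of a point and write $\alpha=g^*\beta$ there; but then $\alpha$ is no longer compactly supported \emph{in} $W$, so the identity $\langle d'd''[f\circ h],\alpha\rangle=\int_W d'd''(f\circ h)\wedge\alpha$ picks up boundary contributions. The paper handles the global-to-local reduction with a partition of unity $(\varphi_i)$ subordinated to a covering by affinoids $U_i$ over which both $\omega$ and $f\circ h$ tropicalize, and then --- the step missing from your sketch --- invokes \cite[Proposition 3.4.4]{chambert-loir-ducros} (applied to the compact support of $\varphi_i\cdot h_i^*(f_i\,d'd''\alpha_i)$ inside $U_i\setminus\partial U_i$) to realize each $\varphi_i$ itself as the pullback of a smooth function along a (harmonic) tropicalization refining $h_i$. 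Only after this can the whole integrand be pushed to the tropical side and Lagerberg's positivity invoked.

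For the ``only if'' direction you propose a contrapositive argument with an explicit witness form, and you yourself flag the construction of the witness as vague. This direction is much simpler than you make it: the paper argues directly, using the Berkovich--Ducros structure theorem to see that $h(\partial X)$ lives in faces of dimension $\leq d-1$, so that positive Lagerberg test forms supported in $\relint(\Delta)$ for a $d$-dimensional face $\Delta$ pull back to admissible test forms on $X\setminus\partial X$; positivity of $d'd''[f\circ h]$ then gives positivity of $d'd''[f]$ on $\relint(\Delta)$, and Lagerberg's criterion finishes. No witness construction or density-of-Abhyankar-points argument is needed. I would also caution that your parenthetical ``lower-dimensional contributions do not occur in the top-degree integration'' in the ``if'' direction --- which you use to assert that $d$-faces of $h'(W)$ project into $d$-faces of $h(X)$ --- is exactly where the convexity hypothesis must be transported along $\mathrm{pr}_1$; the paper states the corresponding fact about $f_i=f\circ L_i$ without belaboring it, but it is not a free observation and deserves a word of justification in a full write-up.
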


\begin{proof}
We assume first that $d'd''[f\circ h]$ is a positive strong current on $X \setminus \partial X$. Using results of Berkovich and Ducros \cite[Theorem 3.4]{ducros12:squelettes_modeles}, the tropical variety $h(X)$ is the support of a $(\Z,\Gamma)$-polytopal complex of dimension at most $d$ such that $h(\partial X)$ is contained in faces of dimension at most $d-1$.  
Let $\Delta$ be any  $d$-dimensional face of $h(X)$. Then positivity of $d'd''[f\circ h]$ on $X \setminus \partial X$ yields that $d'd''[f]$ is a positive current on $\relint(\Delta)$ and hence $f$ is a convex function on the relative interior of $\Delta$ by \cite[Proposition 2.5]{lagerberg-2012}.  By continuity of $f$, we conclude that $f$ is convex on $\Delta$. 

To prove the converse, we assume that the restriction of $f$ to any $d$-dimensional face of $\Delta$ is convex. Let $\omega$ be a positive weakly smooth form on $X \setminus \partial X$ with compact support and of bidegree $(n-1,n-1)$.   Using the theorem of Stokes, we have to prove that 
$$\int_{X \setminus \partial X} d'd''(f\circ h) \wedge \omega \geq 0.$$
We view $\omega$ as a compactly supported weakly smooth form on $X$. Since $X$ is good, there is a family $(U_i)_{i \in I}$ of strictly affinoid subdomains of $X$ whose interiors $U_i^\circ$ cover $X$ and harmonic tropicalization maps $h_i\colon U_i \to \R^{n_i}$ such that $\omega|_{U_i}=h_i^*(\alpha_i)$ for some  smooth positive $(n-1,n-1)$-Lagerberg form $\alpha_i$ on $h_i(U_i)$. 
By \cite[Proposition 3.3.6]{chambert-loir-ducros}, there is a smooth partition of unity $(\varphi_i)_{i \in I}$ subordinated to the covering $(U_i^\circ)_{i \in I}$. 
We may assume that the tropicalization map $h_i$ is a refinement of $h$, i.e.~there is  a $(\Z,\Gamma)$-affine map $L_i\colon h_i(U_i) \to h(X)$ such that $h = L_i \circ h_i$. Then the function $f_i \coloneqq f \circ L_i$ is a smooth function on $h_i(X)$ with convex restriction to each $d$-dimensional face. We conclude that
$$\int_{X \setminus \partial X} d'd''(f\circ h)\wedge \omega = \sum_{i \in I} \int_{U_i} \varphi_i \cdot h_i^*(d'd''f_i \wedge \alpha_i).$$
The support $K_i$ of the weakly smooth form $\eta_i \coloneqq \varphi_i \cdot h_i^*(f_i d'd''\alpha_i)$ is a compact subset of $U_i^\circ \cap (X \setminus \partial X)=U_i \setminus \partial U_i$ \cite[Proposition 3.1.3]{berkovich-book}. It follows from \cite[Lemme 3.2.5]{chambert-loir-ducros} that we can apply 
\cite[Proposition 3.4.4]{chambert-loir-ducros} to this compact subset $K_i$ of $U_i \setminus \partial U_i$. We conclude that there is a strictly affinoid neighbourhood $V_i$ of $\supp(\eta_i)$ in $U_i$ and a smooth tropicalization map $F_i\colon V_i \to \R^{m_i}$ which  satifies $\varphi_i|_{V_i}=\phi_i \circ F_i$ on $V_i$ for a smooth function $\phi_i$ on $F_i(V_i)$. Replacing $F_i$ by a harmonic tropicalization map refining $h_i$, we may assume that $F_i=h_i$. We conclude that
$$\int_{X \setminus \partial X} d'd''(f\circ h)\wedge \omega = \sum_{i \in I} \int_{V_i} h_i^*(\phi_i)  \cdot h_i^*(d'd''f_i \wedge \alpha_i)= \int_{h_i(V_i)}
 d'd''f_i \wedge (\phi_i\alpha_i).$$ 
Since $\phi_i\alpha_i$ is a positive Lagerberg form on $h_i(V_i)$ and since $d'd''f_i$ is a positive Lagerberg form on each maximal face $\Delta$ of $h_i(V_i)$ as $f_i|_\Delta$ is a smooth convex function, the above integral is non-negative proving the claim.
\end{proof}

\begin{cor} \label{strong positive if equivalent to positive}
With the setup of Proposition \ref{positive smooth forms and tropicalization} and assuming $h$ is a smooth tropicalization map, we have  $d'd''[f\circ h]$ is a positive strong current on $X \setminus \partial X$ if and only  $d'd''[f\circ h]_{\rm sm}$ is a positive current on $X \setminus \partial X$.	
\end{cor}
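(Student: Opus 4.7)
The plan is to use Proposition~\ref{positive smooth forms and tropicalization} as a bridge so that both notions of positivity become equivalent to convexity of $f$ on every $d$-dimensional face of the tropical variety $h(X)$. One direction is essentially formal: since $A_{\rm sm}^{\bullet,\bullet}\subset A^{\bullet,\bullet}$, every compactly supported positive smooth form of bidegree $(d-1,d-1)$ is in particular a compactly supported positive weakly smooth form, and by construction the restriction of the strong current $d'd''[f\circ h]$ to compactly supported smooth test forms coincides with $d'd''[f\circ h]_{\rm sm}$. Hence positivity of the strong current at once implies positivity of the smooth current.

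For the converse, I would first show that positivity of $d'd''[f\circ h]_{\rm sm}$ forces $f|_\Delta$ to be convex for every $d$-dimensional face $\Delta$ of $h(X)$; the ``if'' direction of Proposition~\ref{positive smooth forms and tropicalization} then yields positivity of the strong current. To obtain the required convexity, I would revisit the first half of the proof of Proposition~\ref{positive smooth forms and tropicalization}: positivity of the strong current is used there only to deduce, for each top-dimensional face $\Delta$, that $d'd''[f]$ is a positive Lagerberg current on $\relint(\Delta)$, after which Lagerberg's \cite[Proposition 2.5]{lagerberg-2012} forces $f|_\Delta$ to be convex. This reduction tests the current against compactly supported smooth Lagerberg forms supported in $\relint(\Delta)$, whose pull-backs by $h$ are the natural candidate test forms on $X$.

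The hard part will be checking that these pull-back test forms lie in the domain of the smooth current. Precisely here the smoothness hypothesis on $h$ enters: Proposition~\ref{intro: characterization weakly smooth}\ref{harmonic lift} guarantees that, for a smooth tropicalization map, $h^*$ sends smooth Lagerberg forms on $h(X)$ to genuinely smooth forms on $X$, not merely to weakly smooth ones. With this compatibility in hand, the chain of pairings needed in the ``only if'' half of Proposition~\ref{positive smooth forms and tropicalization} uses only smooth test forms, so positivity of $d'd''[f\circ h]_{\rm sm}$ alone suffices to extract the convexity of $f|_\Delta$, and the corollary follows. This also explains why the statement is restricted to smooth tropicalization maps: the analogous statement for harmonic ones would require pulling back smooth Lagerberg forms to smooth forms on $X$, which is in general impossible.
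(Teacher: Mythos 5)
Your proposal is correct and runs along the same conceptual lines as the paper: both arguments pass through the common criterion that positivity of either current is equivalent to convexity of $f$ on every $d$-dimensional face of the tropical variety $h(X)$. The difference is purely in how the smooth half of the equivalence is obtained. The paper simply cites Chambert-Loir--Ducros \cite[Lemme 5.5.3]{chambert-loir-ducros}, which is exactly the statement that $d'd''[f\circ h]_{\rm sm}$ is positive if and only if $f$ is convex on the $d$-dimensional faces; combining this with Proposition \ref{positive smooth forms and tropicalization} closes the loop with no further work. You instead rederive that smooth criterion by auditing the ``only if'' half of the proof of Proposition \ref{positive smooth forms and tropicalization}, observing that when $h$ is a smooth tropicalization map the test forms one pushes through $h^*$ are genuinely smooth (not merely weakly smooth), so testing $d'd''[f\circ h]_{\rm sm}$ against them already extracts convexity on each $\relint(\Delta)$. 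That observation is accurate, and your remark about why the smoothness of $h$ is needed here is the right explanation of the hypothesis. The net effect is that your route is a bit more self-contained, whereas the paper outsources one implication to a black-box citation; neither is mathematically deeper than the other.
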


\begin{proof}
This follows immediately from Proposition \ref{positive smooth forms and tropicalization} as the same criterion holds for positivity of the current $d'd''[f \circ h]_{\rm sm}$ on $X \setminus \partial X$ with respect to the smooth tropicalization map $h$, see 	\cite[Lemme 5.5.3]{chambert-loir-ducros}. 
\end{proof}

In the following, we assume that $X$ is a good strictly analytic boundary-free separated Berkovich space (for example the analytification of an algebraic variety). We consider a line bundle $L$ over $X$ endowed with a continuous semipositive metric $\metr$. It was shown in \cite[Theorem 1.3]{gubler_rabinoff_jell:harmonic_trop} that the first Chern current of $(L,h)$ is positive. Using the above, the same arguments show the following result:

\begin{thm} \label{strong first Chern current is positive}
Let $\metr$ be a continuous semipositive metric on $L$. Then the strong first Chern current of $(L,\metr)$  is positive.
\end{thm}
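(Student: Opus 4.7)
The plan is to follow the proof of \cite[Theorem 1.3]{gubler_rabinoff_jell:harmonic_trop}, but replacing the smooth-tropicalization positivity criterion \cite[Lemme 5.5.3]{chambert-loir-ducros} by its harmonic-tropicalization strengthening, namely Proposition \ref{positive smooth forms and tropicalization}. Since the latter tests positivity against \emph{weakly smooth} forms rather than only smooth ones, this upgrade is exactly what is needed to pass from the first Chern current to the strong first Chern current.

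The claim is local. I would fix $x \in X$, a trivializing section $s$ of $L$ on an open neighborhood $U$ of $x$, and set $u \coloneqq -\log\|s\|$; one must show that $d'd''[u]$ is a positive strong current on $U$. By definition of continuous semipositivity, $\metr$ is a uniform limit of nef model metrics $\metr_k$. The corresponding functions $u_k \coloneqq -\log\|s\|_{\metr_k}$ converge uniformly to $u$ on compact subsets of $U$. Hence $[u_k] \to [u]$ as strong currents, and evaluating on a test form $\omega$ via $d'd''[u_k](\omega) = [u_k](d'd''\omega)$ yields weak convergence $d'd''[u_k] \to d'd''[u]$ against any compactly supported weakly smooth $\omega$. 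Since positivity is preserved under such weak limits, the problem reduces to the case where $\metr = \metr_\Lfrak$ is a nef model metric.

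For nef model metrics, the argument in \cite{gubler_rabinoff_jell:harmonic_trop} shows that, after possibly passing to a tensor power and further shrinking $U$, the function $u_\Lfrak$ has the form $f \circ h$ for a harmonic tropicalization map $h$ defined on a strictly affinoid neighborhood in $U$ and a piecewise affine function $f$ on $h(U)$. The nefness of $\Lfrak|_{\Xfrak_s}$ translates, through the combinatorics of the formal model, into convexity of $f$ on every top-dimensional face of the tropical variety $h(U)$. Proposition \ref{positive smooth forms and tropicalization} then immediately gives that $d'd''[f \circ h] = d'd''[u_\Lfrak]$ is a positive strong current on $U \setminus \partial U$. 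Since $X$ is boundary-free and positivity of strong currents is a local property, these local statements glue to positivity of the global strong first Chern current.

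The hard part will be the faithful adaptation of the model-metric computation of \cite{gubler_rabinoff_jell:harmonic_trop}: verifying that the explicit local factorization of a nef model metric through a harmonic tropicalization map produces a function which is genuinely convex on every top-dimensional face of the tropical image. Once this combinatorial-geometric translation of nefness is in hand — and this is exactly what is carried out for smooth tropicalizations in loc.~cit., with only cosmetic modifications required in the harmonic setting — the transition from the smooth first Chern current to the strong first Chern current is formal, being carried by the harmonic version of the positivity criterion established in this appendix.
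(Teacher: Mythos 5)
Your overall architecture — reduce to nef model metrics by uniform limit, then invoke the harmonic version of the positivity criterion from the appendix — matches the paper's intent, but there is a genuine gap in the middle step where you try to apply Proposition \ref{positive smooth forms and tropicalization} directly. That proposition is stated for a \emph{smooth} function $f$ on the tropical image $h(X)$, and its proof relies essentially on $d'd''(f\circ h)$ being a weakly smooth form (so that Stokes can be applied term by term in the partition of unity). The potential $u_\Lfrak$ of a model metric, however, is only \emph{piecewise affine} on the tropical image; it is not smooth across cell boundaries, so $d'd''[u_\Lfrak]$ is a genuine current, not a form, and the proposition as stated does not apply. Writing ``Proposition \ref{positive smooth forms and tropicalization} then immediately gives\dots'' glosses over exactly the point where the argument breaks.

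The paper avoids this problem by inserting a smoothing step: after reducing to a semipositive model metric, it cites \cite[7.14, Proposition 7.10]{gubler_rabinoff_jell:harmonic_trop} to realize that model metric locally as a uniform limit of \emph{smooth} metrics with positive (ordinary) first Chern current. For those smooth metrics, the local potential factors through a smooth tropicalization map as a genuinely smooth function, so Corollary \ref{strong positive if equivalent to positive} applies and upgrades positivity of the ordinary first Chern current to positivity of the strong one. A final uniform-limit pass then gives the theorem. Your proposal also places quite a bit of weight on the assertion that ``nefness of $\Lfrak|_{\Xfrak_s}$ translates \dots into convexity of $f$ on every top-dimensional face'' with only cosmetic modifications needed; this translation (in the generality of polystable models and their skeletons) is precisely the nontrivial content that is packaged into the cited GRJ result, and you would have to either cite it in the form that produces the smooth approximants or prove a piecewise-linear version of the positivity criterion from scratch. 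Either way, the smoothing (or an explicit PL analogue of Proposition \ref{positive smooth forms and tropicalization}) is missing from your argument and cannot be bypassed.
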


\begin{proof}
By restriction to the irreducible components of $X$, we may assume that $X$ is of pure dimension. 
By assumption, the metric $\metr$ is a uniform limit of semipositive model metrics. Since such a limit obviously preserves positivity of the strong first Chern current, we may assume that $\metr$ is a semipositive model metric. Then \cite[7.14, Proposition 7.10]{gubler_rabinoff_jell:harmonic_trop} yields that $\metr$  is locally a uniform limit of smooth metrics with positive first Chern currents. By Corollary \ref{strong positive if equivalent to positive}, the strong first Chern currents of the smooth metrics are also positive. As the claim is local, the above limit argument shows positivity of the strong first Chern current of $(L,\metr)$.
\end{proof}

\bibliographystyle{alpha}
\bibliography{bib-toric}

\end{document}